\def\blfootnote{\gdef\@thefnmark{}\@footnotetext}
\title{On the arc-analytic type of some weighted homogeneous polynomials\blfootnote{2010 Mathematics Subject Classification. 14B05 (32S15 14E18 14M25 14P99).}}
\newcommand\shorttitle{On the arc-analytic type of some weighted homogeneous polynomials}
\author{Jean-Baptiste Campesato\footnote{\parbox[t][2em][s]{\textwidth}{Department of Mathematics, Faculty of Science, Saitama University, 255 Shimo-Okubo, Sakura-ku, Saitama 338-8570, Japan. \newline E-mail address: \url{jbcampesato@mail.saitama-u.ac.jp}\\Research supported by a Japan Society for the Promotion of Science (JSPS) Postdoctoral Fellowship (Short-term) for North American and European Researchers.}}}
\date{December 2, 2017}
    \DeclareSymbolFont{calletters}{OMS}{cmsy}{m}{n}
    \DeclareSymbolFontAlphabet{\mathcal}{calletters}
    \newcommand{\M}{\mathcal M}
\definecolor{darkred}{rgb}{.5,0,0}
\definecolor{darkgreen}{rgb}{0,.5,0}
\definecolor{darkblue}{rgb}{0,0,.5}
\renewcommand{\thepage}{\arabic{page}}
\shorttitle]{Jean-Baptiste Campesato}
\def\footindent{2em}
\renewcommand\@makefntext[1]{\leftskip=\footindent\hskip-\footindent\@makefnmark#1}
\newcommand*{\fnsymbolsingle}[1]{\ensuremath{\ifcase#1\or\star\or\dagger\or\ddagger\or\textsection\or\|\or\textparagraph\or\else\@ctrerr\fi}}
\newcommand*{\fnsymbolsingle}[1]{\ensuremath{\ifcase#1\or\star\or\dagger\or\ddagger\or\mathsection\or\|\or\mathparagraph\or\else\@ctrerr\fi}}
\newalphalph{\fnsymbolmult}[mult]{\fnsymbolsingle}{}
\renewcommand{\maketitle}{
  \newpage
  \null
  \thispagestyle{plain}
  %\vskip 2em
  \begin{center}
    {\LARGE \@title \par}
    \vskip 1.5em
    {\large
      \lineskip .5em
        \@author
      \par}
    \vskip 1em
    {\large \@date}
  \end{center}
  \par
  \vskip 1.5em}
\theoremstyle{plain}
\newtheorem{thm}{Theorem}[section]
\newtheorem{prop}[thm]{Proposition}
\newtheorem{cor}[thm]{Corollary}
\newtheorem{lemma}[thm]{Lemma}
\theoremstyle{definition}
\newtheorem{defn}[thm]{Definition}
\newtheorem{rem}[thm]{Remark}
\newtheorem{notation}[thm]{Notation}
\setlist[itemize]{labelindent=.6em, itemindent=1em, leftmargin=!, label=\textbullet}
  \newcommand{\TODO}[1]{\@ifmtarg{#1}{\emph{\textbf{TODO}}~}{\emph{\textbf{TODO:}~#1~}}}
\newcommand{\ac}{\operatorname{ac}}
\renewcommand{\d}{\mathrm{d}}
\newcommand{\X}{\mathfrak X}
\newcommand{\mon}{\mathrm{mon}}
\newcommand{\pr}{\operatorname{pr}}
\newcommand{\AS}{\mathcal{AS}}
\newcommand{\mult}{\operatorname{mult}}
\newcommand{\Conv}{\operatorname{Conv}}
\newcommand{\id}{\operatorname{id}}
\renewcommand{\M}{\mathcal M}
\newcommand{\lcm}{\operatorname{lcm}}
\newcommand{\pt}{\mathrm{pt}}
\newcommand{\naive}{\mathrm{naive}}
\newcommand{\cst}{\mathrm{cst}}
\newcommand{\vast}{\bBigg@{4}}
\newcommand{\Vast}{\bBigg@{5}}
\def\acts{\ensuremath{\rotatebox[origin=c]{-90}{$\circlearrowright$}}}
\begin{document}
\maketitle

\begin{abstract}
It is known that the weights of a complex weighted homogeneous polynomial $f$ with isolated singularity are analytic invariants of $(\mathbb C^d,f^{-1}(0))$. When $d=2,3$ this result holds by assuming merely the topological type instead of the analytic one.

G. Fichou and T. Fukui recently proved the following real counterpart: the blow-Nash type of a real singular non-degenerate convenient weighted homogeneous polynomial in three variables determines its weights.

The aim of this paper is to generalize the above-cited result with no condition on the number of variables. We work with a characterization of the blow-Nash equivalence called the arc-analytic equivalence. It is an equivalence relation on Nash function germs with no continuous moduli which may be seen as a semialgebraic version of the blow-analytic equivalence of T.-C. Kuo.
\end{abstract}

\tableofcontents

\section{Introduction}
K. Saito \cite{Sai71} proved that the weights of a complex weighted homogeneous polynomial $f$ with isolated singularity are analytic invariants of $(\mathbb C^d,f^{-1}(0))$. Then it was proved for $d=2$ and $d=3$, respectively by E. Yoshinaga and M. Suzuki \cite{YS79} and by O. Saeki \cite{Sae88}, that the weights are also determined by the topological type of $(\mathbb C^d,f^{-1}(0))$.

T. Fukui \cite[Conjecture 9.2]{Fuk97} conjectured the real counterpart for the blow-analytic equivalence of T.-C. Kuo \cite{Kuo85}. Do two weighted homogeneous polynomials with isolated singularity which are blow-analytically equivalent share the same weights?

In the two variable case, the positive answer has been given by O. M. Abderrahmane \cite{Abd06} for weighted homogeneous polynomials non-degenerate with respect to their Newton polyhedron.

Then G. Fichou and T. Fukui \cite{FF16} proved the conjecture for non-degenerate convenient weighted homogeneous polynomials in three variables up to the blow-Nash equivalence.

The aim of this article consists in proving that Fukui's conjecture holds for non-degenerate convenient weighted homogeneous polynomials, with no condition on the number of variables, up to the arc-analytic equivalence, a characterization of the blow-Nash equivalence.

It is proved in \cite{JBC2} that the arc-analytic type of a singular Brieskorn polynomial determines its exponents. The main idea of the proof of this last result consists in computing the motivic zeta function of a Brieskorn polynomial thanks to a convolution formula. Next we show that we may recover the exponents from the formula obtained with the previous computation. We conclude by noticing that the motivic zeta function is an invariant of the arc-analytic equivalence.

Notice that Brieskorn polynomials are non-degenerate convenient weighted homogeneous polynomials. However, since the variables are no longer separated, we can't use the convolution formula anymore to recover the weights of such a weighted homogeneous polynomial. Instead, the main idea of this article consists in using a formula allowing one to compute the zeta function of a non-degenerate polynomial from its Newton polyhedron, in order to reduce to a similar combinatorial problem as in the Brieskorn polynomials case. \\

The main theorem of this article states that the arc-analytic type of a singular non-degenerate convenient weighted homogeneous polynomial determines its weights.
\begin{thm}[Main theorem]\label{thm:Main}
Let $f,g\in\mathbb R[x_1,\ldots,x_d]$ be two non-degenerate convenient weighted homogeneous polynomials. If $f$ and $g$ are arc-analytically equivalent, then
\begin{enumerate}[nosep, label=(\roman*)]
\item either they are both non-singular, and in this case they are arc-analytically equivalent to $x_1$,
\item or they share the same weights (up to permutations and up to a positive factor, as explained in Remark \ref{rem:defnW}).
\end{enumerate}
\end{thm}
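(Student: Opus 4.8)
The plan is to exploit the fact that the local motivic zeta function $Z_{f,0}$ is an invariant of the arc-analytic equivalence (as recalled in the introduction and used in \cite{JBC2}), so that the hypothesis immediately gives $Z_{f,0}=Z_{g,0}$ in the relevant localization of the Grothendieck ring of $\mathcal{AS}$-sets. The whole statement then reduces to proving, by induction on the number $d$ of variables, the assertion $(\star_d)$: \emph{for a non-degenerate convenient weighted homogeneous $h\in\mathbb R[x_1,\dots,x_d]$ of degree $N$ with weights $(w_1,\dots,w_d)$, the series $Z_{h,0}$ detects whether $h$ is singular at the origin, and, when it is, determines the multiset $\{w_1/N,\dots,w_d/N\}$} (these ratios being themselves insensitive to the rescaling ambiguity in $(w,N)$). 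Granting $(\star_d)$, the theorem is immediate: if $f$ is non-singular then $(\star_d)$ forces $g$ non-singular as well and both fall under (i) by Step 1 below; otherwise both are singular and $(\star_d)$ gives them the same normalized weights, which is (ii).

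\textbf{Step 1 (non-singular case).} Convenience forces the monomial $x_i^{N/w_i}$ into $h$ for every $i$, so $dh_0\neq 0$ precisely when some $N/w_i=1$; after a permutation, $w_1=N$ and $x_1$ occurs in $h$. Then $\{h=0\}$ is a Nash-smooth hypersurface germ and straightening it by a Nash change of coordinates exhibits $h$ as arc-analytically equivalent to $x_1$ (the sign being absorbed by $x_1\mapsto -x_1$). Doing the same for $g$ settles (i).

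\textbf{Step 2 (reduction to combinatorics).} Assume $h$ singular. Every exponent in $\mathrm{supp}(h)$ lies on the hyperplane $H_w=\{\langle w,x\rangle=N\}$, so by convenience the Newton polyhedron of $h$ is the half-space polyhedron $\Gamma_+(h)=\{x\in\mathbb R^d_{\ge 0}:\langle w,x\rangle\ge N\}$, whose single compact facet is the simplex $F=H_w\cap\mathbb R^d_{\ge 0}$ with vertices $\tfrac{N}{w_i}e_i$; in particular $\Gamma_+(h)$ and the normalized weights determine one another. Next I would feed $\Gamma_+(h)$ into the Denef--Loeser-type formula computing $Z_{h,0}$ from a simplicial subdivision of the normal fan of $\Gamma_+(h)$, with the numerical data $(N_\sigma,\nu_\sigma)$ along the rays and the classes $[\{h_\tau=0\}\cap(\mathbb G_m)^d]$ attached to the faces $\tau\preceq F$. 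Since $h$ is weighted homogeneous one has $h_F=h$, so the facet $F$ contributes a term carrying the candidate pole $-(w_1+\dots+w_d)/N$ and a class built from $\{h=0\}\cap(\mathbb G_m)^d$, whereas a face $\tau$ supported on a proper coordinate subset $I\subsetneq\{1,\dots,d\}$ contributes data governed by the restriction $h|_{x_j=0,\,j\notin I}$, which is again non-degenerate, convenient and weighted homogeneous of degree $N$ in the variables $(x_i)_{i\in I}$.

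\textbf{Step 3 (the main obstacle).} The heart of the matter is to invert the formula of Step 2, i.e.\ to recover $\Gamma_+(h)$ --- equivalently the multiset $\{w_i/N\}$ --- from $Z_{h,0}$ alone. Since the Newton polyhedron of a general non-degenerate polynomial is \emph{not} an arc-analytic invariant, one must genuinely use the very rigid half-space shape imposed by weighted homogeneity. I would argue inductively: comparing the contributions of the proper coordinate faces of $\Gamma_+(f)$ and $\Gamma_+(g)$ and applying $(\star_{d-1})$ to the associated restrictions shows that $f$ and $g$ share all $(d-1)$-element sub-multisets of normalized weights, which for $d\ge 3$ already pins down the full multiset, the cases $d\le 2$ being read directly off the poles of $Z_{h,0}$. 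The delicate point --- the one I expect to be hardest --- is that the terms of the formula attached to different faces are not a priori independent in the Grothendieck ring, so one must rule out cancellations that could conceal a face of $\Gamma_+(h)$ or let two distinct weight systems produce the same $Z_{h,0}$; this is precisely the combinatorial obstruction that was surmounted for separated variables in \cite{JBC2}, to be redone here in the intertwined weighted homogeneous setting, e.g.\ by pinpointing in which bidegree in $(\mathbb L,T)$ the information of each weight is encoded and checking that the facet term cannot be manufactured from the lower-dimensional ones. Once this non-cancellation is in place, $(\star_d)$ follows and with it the theorem.
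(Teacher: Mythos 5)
Your overall framework --- invariance of the motivic zeta function under arc-analytic equivalence, the Newton-polyhedron formula for that zeta function, and reduction to a combinatorial inversion problem --- is the same as the paper's, and your Step 1 is essentially the paper's treatment of the non-singular case (including the fact that non-singularity is detected by the vanishing of the modified zeta function). But the entire content of the theorem lives in your Step 3, and you have not supplied it: you explicitly defer the non-cancellation problem (``one must rule out cancellations that could conceal a face\dots'') and the actual recovery of the weights, which is precisely what the paper proves. Identifying where the difficulty lies is not the same as overcoming it, so the argument is incomplete at its crucial step.

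Moreover, the inductive scheme you sketch would not work as stated. To apply $(\star_{d-1})$ to the restrictions $h|_{x_j=0,\,j\notin I}$ you would need to recover the zeta functions of these restrictions from $Z_{h,0}$; but the Newton-polyhedron formula only gives the \emph{sum} over all compact faces of their contributions, each weighted by a lattice-point series $S_{\sigma(\tau)}(T)$, and these contributions overlap in every degree divisible by several of the exponents $\delta_i$, with no a priori way to separate them --- this is exactly the cancellation issue you postpone. The paper instead works coefficientwise: Corollary \ref{cor:nonmult} shows that for $m$ divisible by no $\delta_i$ the coefficient $a_m$ is a pure power of $\mathbb L$ whose degree reveals $\sum_i\left\lfloor m/\delta_i\right\rfloor$; a limit over primes then yields an upper bound on the $\delta_i$; Lemmas \ref{lem:exp2} and \ref{lem:exp4} establish the needed non-vanishing of the relevant coefficients in degrees divisible by $2$ or $4$ (via Euler characteristics mod $2$, the Milnor fibre of the complexification and Khovanskii's formula); and a Chinese-Remainder-Theorem construction then produces a solvable linear system in the multiplicities $\mult(q)=\#\{i:\delta_i=q\}$, which determine the weights by Remark \ref{rem:wvector}. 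None of this machinery, nor any substitute for it, appears in your proposal.
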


We briefly recall the definitions of the objects involved in the previous statement.
\begin{rem}
In this article a \emph{non-degenerate} polynomial is a polynomial which is non-degenerate with respect to its Newton polyhedron. See Definition \ref{defn:NDpol}.
\end{rem}

\begin{defn}
A polynomial $f\in\mathbb R[x_1,\ldots,x_d]$ is said to be \emph{convenient} if its Newton polyhedron\footnote{See Definition \ref{defn:NewPol}.} intersects all the coordinate axes, i.e. for all $i=1,\ldots,d$ a pure monomial of the form $x_i^{\delta_i}$, $\delta_i\in\mathbb N_{>0}$, appears in the expansion of $f$ with a nonzero coefficient.
\end{defn}

\begin{defn}
A polynomial $f\in\mathbb R[x_1,\ldots,x_d]$ is said to be \emph{weighted homogeneous} if there exists $(w_1,\ldots,w_d)\in\mathbb N_{>0}^d$ and $w\in\mathbb N_{>0}$ such that $$\forall\lambda\in\mathbb R,\,f(\lambda^{w_1}x_1,\ldots,\lambda^{w_d}x_d)=\lambda^wf(x_1,\ldots,x_d)$$
\end{defn}

\begin{rem}\label{rem:defnW}
Under the assumptions of Theorem \ref{thm:Main}, the weight system $(w_1,\ldots,w_d)\in\mathbb N_{>0}^d$ of $f$ is well defined up to a multiplicative factor. Notice also that reordering the variables doesn't change the arc-analytic type of a polynomial.

Hence, from now on, we refer to the weights of $f$ as the unique primitive vector\footnote{i.e. $\gcd(w_1,\ldots,w_d)=1$.} $(w_1,\ldots,w_d)\in\mathbb N_{>0}^d$ such that $f$ is weighted homogeneous with respect to the weights $(w_1,\ldots,w_d)$ and we assume that $w_1\ge\cdots\ge w_d$.
\end{rem}

This article is structured as follows. First, Section \ref{sect:recol} recalls the definition of the arc-analytic equivalence together with the definition and the needed properties of the invariant of the arc-analytic equivalence which we will use, namely the motivic zeta function. Then, Section \ref{sect:nondegenerate} introduces a formula to compute the above cited invariant for a non-degenerate polynomial. Finally, Section \ref{sect:proof} contains the essence of the proof of Theorem \ref{thm:Main}. \\

\noindent\textbf{Acknowledgements.} I am very grateful to Toshizumi Fukui for his warm welcome in Saitama University and for our fruitful discussions. I would like to thank Goulwen Fichou and Adam Parusiński for their useful comments on a preliminary version of this work.

\section{Recollection}\label{sect:recol}
\subsection{The arc-analytic equivalence}
The arc-analytic equivalence is a characterization of the blow-Nash equivalence of G. Fichou \cite{Fic05,Fic05-bis,Fic06}. It avoids using Nash modifications in its definition and it allows one to prove, as expected, that it is an equivalence relation on Nash function germs.

G. Fichou \cite{Fic05,Fic05-bis} proved that the blow-Nash equivalence admits no continuous moduli for isolated singularities. A. Parusiński and L. Păunescu \cite{PP} recently proved that the arc-analytic equivalence admits no continuous moduli even for families of non-isolated singularities. We refer the reader to the survey \cite[\S7.1]{JBC3} for more details on the arc-analytic equivalence.

\begin{defn}[{\cite[Definition 7.5]{JBC2}}]
Two Nash function germs\footnote{A Nash function is a real analytic function with semialgebraic graph.} $f,g:(\mathbb R^d,0)\rightarrow(\mathbb R,0)$ are \emph{arc-analytically equivalent} if $f=g\circ\varphi$ where
\begin{itemize}[nosep]
\item $\varphi:(\mathbb R^d,0)\rightarrow(\mathbb R^d,0)$ is a semialgebraic homeomorphism,
\item $\varphi$ is arc-analytic\footnote{i.e. $\varphi$ maps real analytic arcs to real analytic arcs by composition, it is a notion defined by K. Kurdyka \cite{Kur88}.},
\item there exists $c>0$ such that $|\det\d \varphi|>c$ where $\d \varphi$ is defined\footnote{K. Kurdyka \cite{Kur88} proved that a semialgebraic arc-analytic map is real analytic outside a set of codimension 2.}.
\end{itemize}
\end{defn}

\begin{rem}[{\cite[Theorem 3.5]{JBC1}}] Let $\varphi:(\mathbb R^d,0)\rightarrow(\mathbb R^d,0)$ be a semialgebraic homeomorphism which is also arc-analytic. If there exists $c>0$ such that  $|\det\d \varphi|>c$ where $\d \varphi$ is defined then $\varphi^{-1}$ is also arc-analytic and there exists $c'>0$ such that $|\det\d \varphi^{-1}|>c'$ where $\d \varphi^{-1}$ is defined.
\end{rem}

\begin{prop}[{\cite[\S7]{JBC2}}]
\begin{itemize}[nosep]
\item The arc-analytic equivalence is an equivalence relation.
\item Two Nash function germs are arc-analytically equivalent if and only if they are blow-Nash equivalent.
\end{itemize}
\end{prop}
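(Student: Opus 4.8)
The plan is to establish the two bullet points in turn. For the first, reflexivity is witnessed by $\varphi=\id$; symmetry is exactly the Remark stated just above, since if $f=g\circ\varphi$ with $\varphi$ a semialgebraic arc-analytic homeomorphism satisfying $|\det\d\varphi|>c$ where defined, then $\varphi^{-1}$ has the same three properties and $g=f\circ\varphi^{-1}$; and for transitivity, given $f=g\circ\varphi$ and $g=h\circ\psi$, I would check that $\chi:=\psi\circ\varphi$ is again admissible. It is plainly a semialgebraic homeomorphism of $(\mathbb R^d,0)$; it is arc-analytic because the composite of two maps each sending real analytic arcs to real analytic arcs does so as well; and since $\varphi$, $\psi$ (and, by the Remark, $\varphi^{-1}$) are real analytic off semialgebraic sets of codimension $\ge 2$, the same holds for $\chi$, and on the complementary dense set the chain rule gives $|\det\d\chi|=\bigl(|\det\d\psi|\circ\varphi\bigr)\cdot|\det\d\varphi|>c'c>0$. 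Then $f=h\circ\chi$, as wanted.

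For the second bullet point, the implication ``blow-Nash $\Rightarrow$ arc-analytic'' is the more routine one. Recall that a blow-Nash equivalence is given by Nash modifications $\sigma_f\colon(M_f,\sigma_f^{-1}(0))\to(\mathbb R^d,0)$ and $\sigma_g\colon(M_g,\sigma_g^{-1}(0))\to(\mathbb R^d,0)$ together with a Nash isomorphism $\Phi\colon M_f\to M_g$ inducing a homeomorphism $\varphi$ with $f=g\circ\varphi$, the Jacobian determinants $j_{\sigma_f}$ and $j_{\sigma_g}\circ\Phi$ having the same multiplicity along each irreducible component of the exceptional fibre. Then $\varphi$ is semialgebraic and a homeomorphism by construction; it is arc-analytic because a real analytic arc $\gamma$ at $0$ lifts (after refining $\sigma_f$ to a composition of blowings-up along smooth Nash centres, if necessary) to a real analytic arc $\tilde\gamma$ on $M_f$, so that $\varphi\circ\gamma=\sigma_g\circ\Phi\circ\tilde\gamma$ is real analytic; and differentiating $\sigma_g\circ\Phi=\varphi\circ\sigma_f$ shows that $(\det\d\varphi)\circ\sigma_f$ equals, up to sign, $(j_{\sigma_g}\circ\Phi)\cdot(\det\d\Phi)/j_{\sigma_f}$, which by the multiplicity hypothesis is, up to a bounded nonvanishing factor, a nowhere-vanishing Nash function on the compact germ $\sigma_f^{-1}(0)$, whence $|\det\d\varphi|$ is bounded below where defined.

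The converse implication ``arc-analytic $\Rightarrow$ blow-Nash'' is where I expect the main difficulty. Given an admissible $\varphi$, I would appeal to the theorem of Bierstone--Milman (in its semialgebraic, local form) that a semialgebraic arc-analytic map becomes Nash after composition with a suitable Nash modification: choose $\sigma_f\colon(M_f,\sigma_f^{-1}(0))\to(\mathbb R^d,0)$ so that $\sigma_g:=\varphi\circ\sigma_f$ is Nash. Since $\varphi$ is a homeomorphism and $\sigma_f$ a Nash modification, $\sigma_g$ is again a Nash modification of $(\mathbb R^d,0)$; taking $M_g:=M_f$ and $\Phi:=\id$ one obtains the commutative square, and from $\det\d\sigma_g=\bigl((\det\d\varphi)\circ\sigma_f\bigr)\cdot\det\d\sigma_f$ together with the two-sided bound on $|\det\d\varphi|$ (the lower bound by hypothesis, the upper bound by applying the hypothesis to $\varphi^{-1}$) one deduces that $j_{\sigma_g}$ and $j_{\sigma_f}$ have equal multiplicities along each exceptional component, so the Jacobian condition holds and $(\sigma_f,\sigma_g,\Phi)$ is a blow-Nash equivalence between $f$ and $g$. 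The points that need care are the precise germ-wise statement of the Bierstone--Milman result and its compatibility with the chosen normalization of the blow-Nash equivalence, the lifting of real analytic arcs through Nash modifications, and the bookkeeping of Jacobian multiplicities along all exceptional components simultaneously.
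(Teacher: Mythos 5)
First, a point of comparison: this Proposition is not proved in the paper at all --- it is quoted from \cite[\S7]{JBC2} (the relevant statements there are the fact that the relation of Definition 7.5 is an equivalence relation and its identification with the blow-Nash equivalence), so there is no in-paper argument to measure yours against; I can only assess your sketch on its own terms and against the argument of \cite{JBC2}. Your first bullet is essentially the intended argument and is fine: reflexivity via $\varphi=\id$, symmetry is exactly the content of the Remark quoted just above the Proposition (i.e.\ \cite[Theorem 3.5]{JBC1}), and transitivity follows since a composite of semialgebraic arc-analytic homeomorphisms is again one (arc-analyticity is stable under composition by definition), with the Jacobian bound obtained by the chain rule on the dense semialgebraic set where both factors are analytic --- note only that you should say a word about why the exceptional codimension-$2$ sets stay codimension $\ge2$ under $\varphi^{-1}$ (semialgebraic homeomorphisms preserve dimension), and that the lower bound is then transferred to all points where $\d(\psi\circ\varphi)$ is defined. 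Incidentally, this easy transitivity is precisely the \emph{raison d'\^etre} of the arc-analytic formulation, so your observation is in the spirit of the paper.

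The second bullet is where your sketch has genuine gaps. In the direction ``arc-analytic $\Rightarrow$ blow-Nash'', invoking the Bierstone--Milman/Parusi\'nski theorem componentwise only gives a Nash modification $\sigma_f$ such that $\sigma_g:=\varphi\circ\sigma_f$ is a \emph{Nash map}; it does not by itself make $\sigma_g$ a Nash \emph{modification} in the sense required (proper and generically a Nash isomorphism), since a proper, generically injective Nash map need not be generically a Nash isomorphism --- ruling this out, and simultaneously arranging normal crossings so that the Jacobian ideals are principal monomial and the multiplicity comparison along \emph{every} exceptional component makes sense, is exactly where \cite{JBC2} has to invest work, using the inverse mapping theorem for blow-Nash maps of \cite{JBC1} (the very Remark you used for symmetry) together with the two-sided bound on $|\det\d\varphi|$; your sentence ``one deduces that $j_{\sigma_g}$ and $j_{\sigma_f}$ have equal multiplicities'' is the conclusion of that argument, not a consequence of the chain rule alone. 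In the direction ``blow-Nash $\Rightarrow$ arc-analytic'', your arc-lifting step also needs care: properness/valuative-criterion lifting applies to arcs not entirely contained in the non-isomorphism locus, and arcs inside that locus (or inside the centers, after refining to blowings-up) must be treated separately; likewise the lower bound on $|\det\d\varphi|$ requires knowing that the quotient of the two Jacobians extends to a nowhere-vanishing function near the compact exceptional fibre, which uses the normal-crossing form of the Jacobian divisors, not just equality of multiplicities componentwise. So the overall architecture of your proof is the right one, but the converse implication as written presupposes the two key technical inputs of \cite{JBC1,JBC2} rather than deriving them.
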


\subsection{A motivic invariant of the arc-analytic equivalence}
This section recalls the definition and some properties of the motivic zeta function introduced in \cite{JBC2}. We adopt the notations of the survey \cite[\S7.2]{JBC3}. This motivic zeta function is an invariant of the arc-analytic equivalence whose construction is similar to the motivic zeta functions of Denef--Loeser \cite{DL98} as the ones of Koike--Parusiński \cite{KP03} and Fichou \cite{Fic05,Fic05-bis}.

\begin{prop}[{\cite[\S4.2]{Par04}}]
A semialgebraic subset $S$ of $\mathbb P^n_\mathbb R$ is an $\AS$-set if for every real analytic arc $\gamma:(-1,1)\rightarrow\mathbb P^n_\mathbb R$ satisfying $\gamma((-1,0))\subset S$, there exists $\varepsilon>0$ such that $\gamma((0,\varepsilon))\subset S$.
\end{prop}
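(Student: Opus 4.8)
The plan is to prove the equivalence of the stated one‑sided arc‑lifting property with membership of $S$ in the constructible category $\AS$ of Kurdyka–Parusiński, that is, in the Boolean algebra generated under finite unions, intersections and complements by the closed arc‑symmetric semialgebraic subsets of the $\mathbb P^n_\mathbb R$. Throughout I would use freely Kurdyka's structure theory: the closed arc‑symmetric sets are the closed sets of a noetherian topology on $\mathbb P^n_\mathbb R$; for a semialgebraic set $X$ its arc‑symmetric closure $\overline X^{\,\AS}$ is again semialgebraic with $\dim\overline X^{\,\AS}=\dim X$; and if $A$ is $\AS$‑irreducible of dimension $d$ then its $d$‑dimensional Nash‑regular locus $\operatorname{Reg}_d(A)$ (the points near which $A$ is a $d$‑dimensional Nash submanifold) is a connected Nash manifold with $\dim\bigl(A\setminus\operatorname{Reg}_d(A)\bigr)<d$.

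The preliminary step is to check that the class $\mathcal C$ of semialgebraic sets enjoying the arc property is a Boolean algebra containing every closed arc‑symmetric set; this already yields $\AS\subseteq\mathcal C$, and the Boolean‑algebra structure is also the bookkeeping tool for the converse. Stability under finite intersections is clear, and stability under finite unions follows from the fact that the preimage of a semialgebraic set under a real analytic arc is a finite union of points and intervals: if $\gamma\bigl((-1,0)\bigr)\subseteq S_1\cup\cdots\cup S_k$ then some interval $(-\delta,0)$ is mapped into a single $S_i$, and after rescaling the parameter one reduces to one set. The crucial point is stability under complement, handled by reversing the arc: if $\gamma\bigl((-1,0)\bigr)\subseteq S^c$ while $\gamma\bigl((0,\varepsilon)\bigr)\not\subseteq S^c$ for every $\varepsilon$, then $\gamma\bigl((0,\delta)\bigr)\subseteq S$ for some $\delta$, so $t\mapsto\gamma(-\delta t/2)$ is a real analytic arc on $(-1,1)$ carrying $(-1,0)$ into $S$, whence $S\in\mathcal C$ forces a left half‑neighbourhood of $0$ to be mapped into $S$, contradicting $\gamma\bigl((-1,0)\bigr)\subseteq S^c$. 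That closed arc‑symmetric sets lie in $\mathcal C$ is immediate from the definition of arc‑symmetry, and since $\AS$ is the smallest Boolean algebra containing them, $\AS\subseteq\mathcal C$.

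For the reverse inclusion $\mathcal C\subseteq\AS$ — the content of the Proposition — I would argue by induction on $d=\dim S$. If $d\le 0$ then $S$ is a finite set of points, each algebraic, so $S\in\AS$; so assume $d\ge 1$ and the inclusion known in dimension $<d$. Put $A=\overline S^{\,\AS}\in\AS$, so $\dim A=d$. The heart of the matter is the Key Lemma: if $S\in\mathcal C$ then $\dim(A\setminus S)<d$. Granting it, $A\setminus S\in\mathcal C$ by the Boolean‑algebra property, hence $A\setminus S\in\AS$ by the induction hypothesis, and therefore $S=A\setminus(A\setminus S)\in\AS$. To prove the Key Lemma I would first reduce to $A$ being $\AS$‑irreducible: writing $A=A_1\cup\cdots\cup A_m$ for the decomposition into $\AS$‑irreducible components, each $S\cap A_i$ lies in $\mathcal C$ (as $A_i\in\AS\subseteq\mathcal C$), one checks that $\overline{S\cap A_i}^{\,\AS}=A_i$, and $\dim(A\setminus S)=\max_i\dim(A_i\setminus S)$, so it suffices to treat each irreducible piece. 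So let $A$ be $\AS$‑irreducible with $\overline S^{\,\AS}=A$ and $\dim A=d$, and suppose for contradiction that $\dim(A\setminus S)=d$. Put $M=\operatorname{Reg}_d(A)$, a connected Nash $d$‑manifold with $\dim(A\setminus M)<d$; then both $S\cap M$ and $(A\setminus S)\cap M$ have dimension $d$, hence nonempty interior in $M$. Choosing interior points $q\in S\cap M$ and $q'\in(A\setminus S)\cap M$, realising $M$ as a closed Nash submanifold of some $\mathbb R^N$ equipped with a Nash tubular retraction $\rho$, interpolating a polynomial path from $q$ to $q'$ and composing with $\rho$ produces a Nash arc $c$ in $M$ joining $q$ to $q'$; at the right endpoint $b\in(0,1)$ of the connected component of the semialgebraic set $c^{-1}(S)$ containing $0$ one has $c\bigl((b-\nu,b)\bigr)\subseteq S$ and $c\bigl((b,b+\nu)\bigr)\subseteq S^c$ for small $\nu$, so after translating and rescaling one obtains a Nash (hence real analytic) arc $\gamma:(-1,1)\to M\subseteq\mathbb P^n_\mathbb R$ with $\gamma\bigl((-1,0)\bigr)\subseteq S$ and $\gamma\bigl((0,1)\bigr)\subseteq A\setminus S$, so that $\gamma\bigl((0,\varepsilon)\bigr)\not\subseteq S$ for every $\varepsilon$. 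This contradicts $S\in\mathcal C$, proving the Key Lemma and closing the induction.

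The elementary part — that $\mathcal C$ is a Boolean algebra containing the closed arc‑symmetric sets — costs nothing beyond the reparametrisation trick. The real obstacle is the Key Lemma, and specifically the reduction to an $\AS$‑irreducible stratum together with the production of the connecting arc: this is precisely where one must invoke Kurdyka's structure theory (noetherianity, invariance of dimension under arc‑symmetric closure, and connectedness of the regular locus of an $\AS$‑irreducible set), since neither the Zariski nor the Euclidean topology detects the relevant irreducible pieces. A subsidiary but genuine technical point is that the connecting arc must be truly analytic: a naïve concatenation of two curve‑selection arcs at a common point is merely continuous, which is why one instead pushes a single polynomial path into $M$ through an analytic tubular retraction.
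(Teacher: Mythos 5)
First, a point of reference: the paper does not prove this Proposition at all --- it quotes it from Parusi\'nski \cite[\S4.2]{Par04} --- so your attempt can only be judged against the known theory of $\AS$-sets, not against an argument in the text. The elementary half of your plan is sound: the class $\mathcal C$ of semialgebraic sets with the one-sided arc property is a Boolean algebra containing the closed arc-symmetric sets, hence contains $\AS$ (your justification for union-stability, that $\gamma^{-1}(S_i)$ is globally a finite union of points and intervals, is not quite right --- such preimages can accumulate at $\pm1$ --- but the local statement near $t=0$, via constancy of the sign vector of the finitely many defining polynomials along an analytic arc, is all you need, and the reversal trick for complements is correct).

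The genuine gap is in your proof of the Key Lemma. It rests on the structure claim that for an $\AS$-irreducible set $A$ of dimension $d$ the Nash-regular locus $\operatorname{Reg}_d(A)$ is connected, and that claim is false. Take the lemniscate $L=\{(x^2+y^2)^2=x^2-y^2\}$: it is a compact algebraic, hence closed arc-symmetric, curve with a global analytic (rational) parametrization traversing both petals through the node, and the usual propagation argument along this parametrization shows that any closed arc-symmetric subset of $L$ containing an open arc is all of $L$; thus $L$ is $\AS$-irreducible, while $\operatorname{Reg}_1(L)=L\setminus\{0\}$ has two connected components. For such an $A$, with $S$ containing one component of $\operatorname{Reg}_d(A)$ and $A\setminus S$ another, the contradiction you aim for --- a Nash arc inside $M=\operatorname{Reg}_d(A)$ crossing from $S$ into $A\setminus S$ --- simply does not exist, so the argument produces nothing. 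And the example shows why this is not a repairable detail: for $S$ equal to one open petal of $L$ the conclusion of the Key Lemma genuinely fails ($\dim(\overline S^{\,\AS}\setminus S)=1=\dim S$), and the only thing that saves the Proposition is that the arc condition for this $S$ is violated by an arc passing through the node, i.e.\ through $\Sing(A)$. In other words, the entire strength of the hypothesis $S\in\mathcal C$ lies in analytic arcs that cross the singular locus, and a proof whose test arcs are confined to the smooth locus cannot use it; a correct argument must let arcs pass through singular points (via the description of the $\AS$-closure by analytic arcs, normalization or resolution, which is the machinery behind \cite{Kur88} and \cite{Par04}). A secondary, fixable, weakness: joining two points of $M$ by composing a polynomial path with a Nash tubular retraction does not work as written, since the path need not stay in the tubular neighbourhood; but the false connectedness claim, not this, is what breaks the proof.
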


\begin{defn}
Let $K_0(\AS)$ be the free abelian group spanned by the symbols\footnote{It is well defined since $\AS$ is a set.} $[X]$ with $X\in\AS$ modulo the following relations:
\begin{enumerate}[label=(\arabic*), nosep]
\item If there is a bijection $X\rightarrow Y$ whose graph is $\AS$ then $[X]=[Y]$.
\item For $X\in\AS$ and $Y\subset X$ a closed $\AS$-subset we set $[X\setminus Y]+[Y]=[X]$.
\end{enumerate}
The cartesian product induces a ring structure:
\begin{enumerate}[label=(\arabic*), nosep, resume]
\item $[X][Y]=[X\times Y]$.
\end{enumerate}
\end{defn}

\begin{rem}
\begin{itemize}[nosep]
\item We denote by $0=[\varnothing]$ the class of the empty set. It is the unit of the addition.
\item We denote by $1=[\pt]$ the class of the point. It is the unit of the multiplication.
\item We denote by $\mathbb L_\AS=[\mathbb R]$ the class of the affine line and we set $\M_\AS=K_0(\AS)\left[\mathbb L_\AS^{-1}\right]$.
\end{itemize}
\end{rem}

\begin{thm}[\cite{MP03},\cite{Fic05},\cite{MP11}]
There is a unique map $\beta:\AS\rightarrow\mathbb Z[u]$, called the \emph{virtual Poincaré polynomial}, such that
\begin{itemize}[nosep]
\item $\beta$ factorises through a ring morphism $\beta:K_0(\AS)\rightarrow\mathbb Z[u]$.
\item If $X$ compact and non-singular then $\beta(X)=\sum_i\dim H_i(X,\mathbb Z_2)u^i$.
\end{itemize}
Moreover, if $X\neq\varnothing$ then $\deg\beta(X)=\dim X$ and the leading coefficient of $\beta(X)$ is positive\footnote{If $X=\varnothing$ then $\beta(\varnothing)=0$.}.
\end{thm}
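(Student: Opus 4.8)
The statement asserts the existence and uniqueness of a ring morphism $\beta\colon K_0(\AS)\to\mathbb Z[u]$ (the virtual Poincaré polynomial) which on a compact nonsingular $\AS$-set agrees with the Poincaré polynomial in $\mathbb Z_2$-cohomology, together with the fact that $\deg\beta(X)=\dim X$ with positive leading coefficient for $X\neq\varnothing$. Since this is attributed in the excerpt to \cite{MP03}, \cite{Fic05}, \cite{MP11}, my plan is to reconstruct the classical argument of McCrory--Parusi\'nski: first establish existence on compact nonsingular sets, then propagate to all $\AS$-sets via resolution of singularities and the scissor relations, and finally check the well-definedness on $K_0(\AS)$ and the multiplicativity. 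Throughout, the key external input is that $\AS$-sets admit a stratification into pieces that are ``like'' real algebraic varieties, and that each such piece can be compactified/resolved within the category of $\AS$-sets, using Hironaka-type resolution over $\mathbb R$ combined with the arc-symmetry (Nash) structure.

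\textbf{Uniqueness first.} I would observe that uniqueness is the easy half and pins down the construction: any two candidate morphisms $\beta_1,\beta_2$ agreeing on compact nonsingular $\AS$-sets must agree everywhere, because every class $[X]\in K_0(\AS)$ can be written, using relation (2) (the scissor relation) and the bijection relation (1), as a $\mathbb Z$-linear combination of classes of compact nonsingular $\AS$-sets. Concretely, given $X\in\AS$, one stratifies $X$ into nonsingular locally closed $\AS$-strata; each stratum is $\AS$-isomorphic to the complement of a closed $\AS$-subset in a compact nonsingular $\AS$-set (a resolved compactification), so its class is a difference of two such, and one induces on dimension. This reduces the whole theorem to a well-posedness statement.

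\textbf{Existence.} For the existence, the plan is: (a) define $\beta$ on compact nonsingular $\AS$-sets by the prescribed formula $\beta(X)=\sum_i\dim_{\mathbb Z_2}H_i(X,\mathbb Z_2)\,u^i$; (b) extend to arbitrary $X\in\AS$ by picking a stratification and a resolved compactification of each stratum and declaring $\beta(X)$ to be the resulting alternating-type combination dictated by the scissor relations; (c) prove this is independent of all choices. Step (c) is the heart of the matter and I expect it to be the main obstacle. The standard route is the one used for the ordinary virtual Poincar\'e polynomial over $\mathbb C$ via weight filtrations on Borel--Moore homology, adapted to the real $\AS$-category: one constructs directly a Borel--Moore-type homology theory with a canonical weight filtration on $\AS$-sets such that the weight polynomial is additive for closed--open decompositions and multiplicative for products, and which restricts to $\mathbb Z_2$-Betti numbers on compact nonsingular sets; the two key technical pillars are (i) a real analogue of the weight spectral sequence coming from a cubical hyperresolution by compact nonsingular $\AS$-sets, whose existence rests on resolution of singularities, and (ii) invariance of the abutment under the choice of hyperresolution, which follows from functoriality of resolution and a comparison argument between any two hyperresolutions via a common refinement. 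Granting (i) and (ii), additivity and multiplicativity of $\beta$, and hence factorisation through $K_0(\AS)$ and through $\M_\AS$, become formal.

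\textbf{Degree and leading coefficient.} Finally, for $X\neq\varnothing$ I would argue $\deg\beta(X)=\dim X$ with positive leading term by reduction to the smooth case: pick a top-dimensional nonsingular $\AS$-stratum $U$ of dimension $n=\dim X$; by additivity $\beta(X)=\beta(U)+\beta(X\setminus U)$ with $\dim(X\setminus U)<n$, so by induction it suffices to handle $U$. Choosing a resolved compactification $\overline U\supset U$ with $\overline U$ compact nonsingular of dimension $n$ and $Z=\overline U\setminus U$ of dimension $<n$, additivity gives $\beta(U)=\beta(\overline U)-\beta(Z)$; since $\overline U$ is compact, nonsingular, $n$-dimensional and nonempty, $H_n(\overline U,\mathbb Z_2)\neq 0$ (the $\mathbb Z_2$-fundamental class), so $\beta(\overline U)$ has degree exactly $n$ with coefficient $\geq 1$, while $\beta(Z)$ has degree $<n$; hence $\beta(U)$, and then $\beta(X)$, has degree $n$ and positive leading coefficient. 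The convention $\beta(\varnothing)=0$ is immediate. The only genuinely delicate point throughout remains the choice-independence in step (c), i.e.\ the construction of the real weight filtration and the comparison of hyperresolutions; everything else is bookkeeping with the defining relations of $K_0(\AS)$.
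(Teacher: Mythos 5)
The paper does not prove this theorem: it is imported verbatim from \cite{MP03}, \cite{Fic05}, \cite{MP11}, so there is no internal proof to compare your attempt with. Your sketch is a reasonable reconstruction of the strategy of those references, with one historical remark: the route you describe (cubical hyperresolutions and a real weight filtration on a Borel--Moore type theory) is that of \cite{MP11}, whereas \cite{MP03} and \cite{Fic05} obtain existence more directly from resolution of singularities together with the weak factorization theorem, via a Bittner-type presentation of the Grothendieck group by compact nonsingular objects modulo blow-up relations. Your uniqueness argument and the degree/leading-coefficient argument are correct in outline, modulo the small fix that $U$ must be the union of \emph{all} top-dimensional nonsingular strata (a single stratum does not give $\dim(X\setminus U)<\dim X$), and the observation that leading terms of the strata add without cancellation because they are positive by induction.

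The genuine gap, relative to the precise statement, is well-definedness with respect to relation (1) of $K_0(\AS)$: there $[X]=[Y]$ whenever there is a bijection $X\to Y$ with $\AS$ graph, which is much weaker than a Nash or regular isomorphism. Your existence step only addresses independence of the chosen stratification and (hyper)resolution; it never shows that $\beta$ takes equal values on sets related by such a bijection, and this is exactly the delicate point that the cited works are designed to handle (Fichou's invariance under Nash isomorphisms, and the functoriality of the weight filtration of McCrory--Parusi\'nski, building on results about injective maps with $\AS$ graph); it does not follow formally from additivity plus resolution. Moreover the two pillars (i) and (ii) that you ``grant'' are not technical conveniences but the entire content of \cite{MP11}, so as a standalone argument the proposal defers, rather than supplies, the heart of the proof; as a guide to the literature it is accurate, but it would not serve as a proof of the statement.
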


\begin{rem}
The virtual Poincaré polynomial induces a ring morphism $\beta:\M_\AS\rightarrow\mathbb Z[u,u^{-1}]$.
\end{rem}

The following Grothendieck group is an adaptation of the one of Guibert--Loeser--Merle \cite{GLM} to our settings.
\begin{defn}[{\cite[Definition 3.4]{JBC2}}]\label{defn:K0ASmon}
For $n\in\mathbb N_{>0}$, we denote by $K_0(\AS^n_\mon)$ the free abelian group spanned by the symbols $$\left[\varphi_X:\mathbb R^*\acts X\rightarrow\mathbb R^*\right]$$ where $X\in\AS$, the graph $\Gamma_{\varphi_X}\in\AS$, the graph of the action $\Gamma_{\mathbb R^*\times X\rightarrow X}\in\AS$ and finally for all $(\lambda,x)\in\mathbb R^*\times X$, $\varphi_X(\lambda\cdot x)=\lambda^n\varphi_X(x)$, modulo the following relations
\begin{enumerate}[ref=\ref{defn:K0ASmon}.(\arabic*), label=(\arabic*), nosep]
\item If there exists $f:X\rightarrow Y$ an $\mathbb R^*$-equivariant bijection with $\AS$-graph such that the following diagram commutes $$\xymatrix{X \ar[rr]^f_\simeq \ar[rd]_{\varphi_X} & & Y \ar[dl]^{\varphi_Y} \\ & \mathbb R^* &}$$ then we set $$\left[\varphi_X:\mathbb R^*\acts X\rightarrow\mathbb R^*\right]=\left[\varphi_Y:\mathbb R^*\acts Y\rightarrow\mathbb R^*\right]$$
\item If $Y$ is an $\mathbb R^*$-invariant closed $\AS$-subset of $X$ then $$\left[\varphi_X:\mathbb R^*\acts X\rightarrow\mathbb R^*\right]=\left[\varphi_{X|Y}:\mathbb R^*\acts Y\rightarrow\mathbb R^*\right]+\left[\varphi_{X|X\setminus Y}:\mathbb R^*\acts X\setminus Y\rightarrow\mathbb R^*\right]$$
\item\label{item:K0liftings} Let $\varphi_Y:\mathbb R^*\acts_\tau Y\rightarrow\mathbb R^*$ be a symbol and $\psi=\varphi_Y\pr_Y:Y\times\mathbb R^m\rightarrow\mathbb R^*$. Let $\sigma$ and $\sigma'$ be two actions of $\mathbb R^*$ on $Y\times\mathbb R^m$ which are two liftings\footnote{i.e. $\pr_Y(\lambda\cdot_\sigma x)=\lambda\cdot_\tau\pr_Y(x)$.} of $\tau$ then $\psi:\mathbb R^*\acts_{\sigma}(Y\times\mathbb R^m)\rightarrow\mathbb R^*$ and $\psi:\mathbb R^*\acts_{\sigma'}(Y\times\mathbb R^m)\rightarrow\mathbb R^*$ are two symbols and we add the relation $$\left[\psi:\mathbb R^*\acts_{\sigma}(Y\times\mathbb R^m)\rightarrow\mathbb R^*\right]=\left[\psi:\mathbb R^*\acts_{\sigma'}(Y\times\mathbb R^m)\rightarrow\mathbb R^*\right]$$
\end{enumerate}
The fiber product over $\mathbb R^*$ induces a ring structure:
\begin{enumerate}[label=(\arabic*), nosep, resume]
\item We add the relation $$\left[\varphi_X:\mathbb R^*\acts X\rightarrow\mathbb R^*\right]\left[\varphi_Y:\mathbb R^*\acts Y\rightarrow\mathbb R^*\right]=\left[\mathbb R^*\acts(X\times_{\mathbb R^*}Y)\rightarrow\mathbb R^*\right]$$ where the action of $\mathbb R^*$ on $X\times_{\mathbb R^*}Y$ is diagonal from the previous ones.
\end{enumerate}
The cartesian product induces a structure of $K_0(\AS)$-algebra\footnote{The algebra structure is given by the structural morphism $K_0(\AS)\rightarrow K_0(\AS^n_\mon)$ defined by $[A]\rightarrow\left[\pr_{\mathbb R^*}:A\times\mathbb R^*\rightarrow\mathbb R^*\right]$ where the $\mathbb R^*$-action is $\lambda\cdot(a,r)=(a,\lambda^nr)$.}:
\begin{enumerate}[label=(\arabic*), nosep, resume]
\item Let $[A]\in K_0(\AS)$ and $\left[\varphi_X:\mathbb R^*\acts X\rightarrow\mathbb R^*\right]\in K_0(\AS^n_\mon)$ then we set $$[A]\cdot\left[\varphi_X:\mathbb R^*\acts X\rightarrow\mathbb R^*\right]=\left[\varphi_X\pr_X:A\times X\rightarrow\mathbb R^*\right]$$ where the action is trivial on $A$ and unchanged on $X$.
\end{enumerate}
\end{defn}

\begin{defn}
%The order $n\prec m\Leftrightarrow\exists k\in\mathbb N_{>0},\,n=km$ together with the morphisms $\theta_n^m:K_0(\AS^m_\mon)\rightarrow K_0(\AS^n_\mon)$ modifying the $\mathbb R^*$-action by $\lambda\cdot_{K_0(\AS^n_\mon)}x=\lambda^k\cdot_{K_0(\AS^m_\mon)}x$ for $n\prec m$ define a direct system and we set $K_0(\AS_\mon)=\varinjlim K_0(\AS^n_\mon)$. Notice that $K_0(\AS_\mon)$ has a natural structure of $K_0(\AS)$-algebra. 
We set $K_0(\AS_\mon)=\varinjlim K_0(\AS^n_\mon)$ where the direct system is induced by modifying the $\mathbb R^*$-action by $\lambda\cdot_{K_0(\AS^n_\mon)}x=\lambda^k\cdot_{K_0(\AS^m_\mon)}x$ when $n=km$. Notice that $K_0(\AS_\mon)$ has a natural structure of $K_0(\AS)$-algebra.
\end{defn}

\begin{rem}
\begin{itemize}[nosep]
\item We denote by $0=[\varnothing]$ the class of the empty set. It is the unit of the addition.
%\item For $n\in\mathbb N_{>0}$, we set $\mathbb1_n=\left[\id:\mathbb R^*\acts\mathbb R^*\rightarrow\mathbb R^*\right]\in K_0(\AS^n_\mon)$ where the action is defined by $\lambda\cdot r=\lambda^nr$. Notice that these elements are compatible under the direct limit so that they induce an element $\mathbb 1\in K_0(\AS_\mon)$. Indeed, for all $m,n\in\mathbb N_{>0}$, $m\prec 1$, $n\prec 1$, $\mathbb 1_n=\theta_n^1(\mathbb 1_1)$ and $\mathbb 1_m=\theta_m^1(\mathbb 1_1)$. Hence, we will use the previous identification to identify $\mathbb 1$ with $\mathbb 1_1$ and assume that the action is given by translation. It is the unit of the multiplication.
\item For $n\in\mathbb N_{>0}$, we set $\mathbb1_n=\left[\id:\mathbb R^*\acts\mathbb R^*\rightarrow\mathbb R^*\right]\in K_0(\AS^n_\mon)$ where the action is defined by $\lambda\cdot r=\lambda^nr$, it is the unit of the product. Notice that the $\mathbb 1_n$ are identified under the inductive limit so that they induce an element $\mathbb 1\in K_0(\AS_\mon)$ which is also the unit of the product. Under the previous identification, we may chose $\mathbb 1_1$ as a representative and assume that $\mathbb1=\left[\id:\mathbb R^*\acts\mathbb R^*\rightarrow\mathbb R^*\right]$ where the action is given by translation i.e. $\lambda\cdot r=\lambda r$.
\item We denote by $\mathbb L=\mathbb L_{\AS}\cdot\mathbb1$ the class of the affine line and we set $\M=K_0(\AS_\mon)\left[\mathbb L^{-1}\right]$ so that $\M$ has a natural structure of $\M_{\AS}$-algebra.
\end{itemize}
\end{rem}

The following properties of $K_0(\AS_\mon)$ and $\M_\AS$ will be useful in what follows.
\begin{prop}[{\cite[End of \S3]{JBC2}}]
The map $\AS_\mon^n\rightarrow\AS$ defined by $\left(\varphi_X:\mathbb R^*\acts X\rightarrow\mathbb R^*\right)\mapsto X$ induces:
\begin{itemize}[nosep]
\item A morphism of $K_0(\AS)$-modules $\overline{\vphantom{1em}\ \cdot\ }:K_0(\AS_\mon)\rightarrow K_0(\AS)$,
\item A morphism of $\M_{\AS}$-modules $\overline{\vphantom{1em}\ \cdot\ }:\M\rightarrow\M_{\AS}$.
\end{itemize}
\end{prop}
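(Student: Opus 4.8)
The plan is threefold: first check that the assignment on generators $[\varphi_X\colon\mathbb R^*\acts X\to\mathbb R^*]\mapsto[X]\in K_0(\AS)$ is compatible with the relations defining $K_0(\AS^n_\mon)$ and with its $K_0(\AS)$-module structure; then glue these maps along the inductive system defining $K_0(\AS_\mon)$; and finally localize at $\mathbb L$. The first two steps are routine verifications that forgetting the action and the map $\varphi_X$ respects everything in sight; the only point that demands a moment's care is the last one, where one must resist invoking the universal property of localization with $\overline{\mathbb L}$ directly.

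First I would verify descent and $K_0(\AS)$-linearity. Since $X\in\AS$ is part of the datum of a symbol, $[X]$ is a well-defined element of $K_0(\AS)$, so we have a map on generators. For the first relation in Definition \ref{defn:K0ASmon}, an $\mathbb R^*$-equivariant bijection $X\to Y$ with $\AS$-graph is in particular an $\AS$-bijection, so $[X]=[Y]$ in $K_0(\AS)$; for the second, an $\mathbb R^*$-invariant closed $\AS$-subset $Y\subseteq X$ is a fortiori a closed $\AS$-subset, so $[X]=[Y]+[X\setminus Y]$; relation \ref{item:K0liftings} is automatic, as the two symbols involved share the same underlying $\AS$-set $Y\times\mathbb R^m$, whose class is insensitive to the chosen lifting of the action. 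Hence we obtain a group morphism $\overline{\vphantom{1em}\ \cdot\ }_n\colon K_0(\AS^n_\mon)\to K_0(\AS)$, and it is $K_0(\AS)$-linear because the module structure sends $[A]\cdot[\varphi_X\colon\mathbb R^*\acts X\to\mathbb R^*]$ to $[\varphi_X\pr_X\colon A\times X\to\mathbb R^*]$, of image $[A\times X]=[A][X]$. (It is not a ring morphism: $\overline{\mathbb1}_n=[\mathbb R^*]=\mathbb L_\AS-1$.) The transition maps of the inductive system only replace the $\mathbb R^*$-action on a representative by one of its powers and leave $X$ unchanged, so the $\overline{\vphantom{1em}\ \cdot\ }_n$ are compatible with it and induce the sought $K_0(\AS)$-module morphism $\overline{\vphantom{1em}\ \cdot\ }\colon K_0(\AS_\mon)\to K_0(\AS)$.

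Finally, to handle the localization, the observation I would use is that inside the ring $K_0(\AS_\mon)$ multiplication by $\mathbb L=\mathbb L_\AS\cdot\mathbb1$ coincides with the module action of $\mathbb L_\AS\in K_0(\AS)$: for $a\in K_0(\AS_\mon)$ one has $\mathbb L\cdot a=(\mathbb L_\AS\cdot\mathbb1)\,a=\mathbb L_\AS\cdot(\mathbb1\,a)=\mathbb L_\AS\cdot a$ since $\mathbb1$ is the unit. By $K_0(\AS)$-linearity this gives $\overline{\mathbb L^k a}=\mathbb L_\AS^k\,\overline a$ for every $k\ge 0$, so the assignment $a/\mathbb L^k\mapsto\overline a/\mathbb L_\AS^k$ is well defined on $\M=K_0(\AS_\mon)[\mathbb L^{-1}]$: if $\mathbb L^m(\mathbb L^l a-\mathbb L^k b)=0$ then applying $\overline{\vphantom{1em}\ \cdot\ }$ yields $\mathbb L_\AS^{m+l}\overline a=\mathbb L_\AS^{m+k}\overline b$, i.e. $\overline a/\mathbb L_\AS^k=\overline b/\mathbb L_\AS^l$ in $\M_\AS=K_0(\AS)[\mathbb L_\AS^{-1}]$. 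Additivity and $\M_\AS$-linearity of the result are then clear, giving $\overline{\vphantom{1em}\ \cdot\ }\colon\M\to\M_\AS$. This is exactly the step where the naive approach breaks: from $\overline{\mathbb1}=\mathbb L_\AS-1$ one gets $\overline{\mathbb L}=\mathbb L_\AS(\mathbb L_\AS-1)$, which is \emph{not} invertible in $\M_\AS$, so one cannot localize by inverting $\overline{\mathbb L}$; what makes the construction go through is that $\overline{\vphantom{1em}\ \cdot\ }$ turns multiplication by $\mathbb L$ into the scalar $\mathbb L_\AS$, and it is $\mathbb L_\AS$ — not $\overline{\mathbb L}$ — that has been inverted in $\M_\AS$. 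No other part of the argument presents any real obstacle.
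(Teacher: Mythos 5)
Your proposal is correct: the forgetful map clearly kills relations (1)--(3) of Definition \ref{defn:K0ASmon}, is $K_0(\AS)$-linear by relation (5), passes to the direct limit since the transition maps fix the underlying set, and your treatment of the localization — using that $\mathbb L\cdot a=\mathbb L_\AS\cdot a$ so that $\overline{\vphantom{1em}\ \cdot\ }$ intertwines inversion of $\mathbb L$ with inversion of the scalar $\mathbb L_\AS$ rather than of $\overline{\mathbb L}=\mathbb L_\AS(\mathbb L_\AS-1)$ — is exactly the right way to see why one gets a module (and not a ring) morphism $\M\to\M_\AS$. The paper itself gives no proof, merely citing \cite[End of \S3]{JBC2}, and your argument is the standard verification given there.
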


\begin{prop}[{\cite[Proposition 4.16]{JBC2}}]
Let $\varepsilon\in\{+,-\}$. The map $\AS_\mon^n\rightarrow\AS$ defined by $\left(\varphi_X:\mathbb R^*\acts X\rightarrow\mathbb R^*\right)\mapsto\varphi_X^{-1}(\varepsilon1)$ induces:
\begin{itemize}[nosep]
\item A morphism of $K_0(\AS)$-algebras $F^\varepsilon:K_0(\AS_\mon)\rightarrow K_0(\AS)$,
\item A morphism of $\M_{\AS}$-algebras $F^\varepsilon:\M\rightarrow\M_{\AS}$.
\end{itemize}
\end{prop}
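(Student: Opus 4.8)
The plan is to build $F^\varepsilon$ in three stages: first fibrewise at each level $n$, then by passing to the direct limit, and finally by inverting $\mathbb L$. At the first stage I would start from the naive assignment $\left[\varphi_X:\mathbb R^*\acts X\rightarrow\mathbb R^*\right]\mapsto\left[\varphi_X^{-1}(\varepsilon1)\right]$ and check that it respects the relations of Definition~\ref{defn:K0ASmon}. Since $\varphi_X$ has $\AS$-graph and $\{\varepsilon1\}$ is an $\AS$-set, the fibre $\varphi_X^{-1}(\varepsilon1)$ is identified, through $x\mapsto(x,\varepsilon1)$, with the $\AS$-set $\Gamma_{\varphi_X}\cap(X\times\{\varepsilon1\})$, so its class in $K_0(\AS)$ is defined. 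Relation~(1) holds because an $\mathbb R^*$-equivariant $\AS$-bijection $f:X\to Y$ over $\mathbb R^*$ restricts to a bijection $\varphi_X^{-1}(\varepsilon1)\to\varphi_Y^{-1}(\varepsilon1)$ whose graph is $\Gamma_f$ intersected with a product of $\AS$-sets. Relation~(2) holds because splitting $X$ into an $\mathbb R^*$-invariant closed $\AS$-subset $Y$ and its complement splits $\varphi_X^{-1}(\varepsilon1)$ accordingly, the piece over $Y$ being closed, which is exactly relation~(2) of $K_0(\AS)$.

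I expect relation~\ref{item:K0liftings} to be the only point requiring a genuine observation: there $\psi=\varphi_Y\circ\pr_Y$ does not involve the $\mathbb R^*$-action at all, so $\psi^{-1}(\varepsilon1)=\varphi_Y^{-1}(\varepsilon1)\times\mathbb R^m$ whether one uses the lifting $\sigma$ or $\sigma'$; both symbols therefore have the same image and the relation is respected.

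Next I would check that the resulting maps $F^\varepsilon:K_0(\AS^n_\mon)\to K_0(\AS)$ are ring morphisms compatible with the $K_0(\AS)$-algebra structure. The product $\left[\varphi_X\right]\left[\varphi_Y\right]$ is represented by $X\times_{\mathbb R^*}Y$ with structure map the common value $\varphi_X(x)=\varphi_Y(y)$, whose fibre over $\varepsilon1$ is $\varphi_X^{-1}(\varepsilon1)\times\varphi_Y^{-1}(\varepsilon1)$, giving multiplicativity; the structural symbol $\left[\pr_{\mathbb R^*}:A\times\mathbb R^*\rightarrow\mathbb R^*\right]$ is sent to $\left[A\times\{\varepsilon1\}\right]=[A]$, so $F^\varepsilon$ commutes with the structural morphism and is a $K_0(\AS)$-algebra morphism (in particular $F^\varepsilon(\mathbb1)=\left[\{\varepsilon1\}\right]=1$, and $\left[\varphi_X\pr_X:A\times X\rightarrow\mathbb R^*\right]$ goes to $[A]\cdot F^\varepsilon\left(\left[\varphi_X\right]\right)$). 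The transition morphisms of the direct system only reparametrise the action on a fixed $X$, leaving $\varphi_X^{-1}(\varepsilon1)$ unchanged, so the $F^\varepsilon$ are compatible with the system and induce $F^\varepsilon:K_0(\AS_\mon)\to K_0(\AS)$. Finally $F^\varepsilon(\mathbb L)=F^\varepsilon\left(\mathbb L_\AS\cdot\mathbb1\right)=\mathbb L_\AS$ is invertible in $\M_\AS$, so the universal property of localisation extends $F^\varepsilon$ to the desired $\M_\AS$-algebra morphism $\M\to\M_\AS$.

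The main obstacle is relation~\ref{item:K0liftings}, but it dissolves once one notices that the map involved is pulled back from $Y$ and so cannot see the choice of lifting; all the remaining verifications reduce to the standard stability of $\AS$-sets under products, finite intersections and fibres of $\AS$-maps.
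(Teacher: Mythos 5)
Your verification is correct and is essentially the intended one: the paper does not reprove this proposition but cites \cite[Proposition 4.16]{JBC2}, and the only substantive point it records --- that compatibility with the ring structure comes from the fiber product over a point being the cartesian product --- is exactly your argument for multiplicativity. Your treatment of relation \ref{item:K0liftings} (the fibre $\psi^{-1}(\varepsilon1)=\varphi_Y^{-1}(\varepsilon1)\times\mathbb R^m$ is blind to the choice of lifting) and of the direct limit and localisation steps is the standard and correct route.
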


\begin{rem}
The morphisms of the last proposition are compatible with the ring structures since the fiber product over a point coincides with the cartesian product.
\end{rem}

The following zeta function is a real analog of the equivariant motivic zeta function of Denef--Loeser.
\begin{defn}[{\cite[Definition 4.2]{JBC2}}]
Let $f:(\mathbb R^d,0)\rightarrow(\mathbb R,0)$ be a Nash function germ. We set $$Z_f(T)=\sum_{n\ge1}\left[\ac_f^n:\mathbb R^*\acts\X_n(f)\rightarrow\mathbb R^*\right]\mathbb L^{-nd}T^n\in\M\llbracket T\rrbracket$$ where $\X_n(f)=\left\{\gamma=a_1t+\cdots+a_nt^n,\,a_i\in\mathbb R^d,\,f\gamma(t)=ct^n+\cdots,\,c\neq0\right\}$, $\ac_f^n:\X_n(f)\rightarrow\mathbb R^*$ is the angular component map defined by $\ac_f^n(\gamma)=\ac(f\gamma)=c$ and where the action is defined by $\lambda\cdot\gamma(t)=\gamma(\lambda t)$.
\end{defn}

\begin{rem}
Notice that the coefficients $P_i(a_1,\ldots,a_n)$ of $$f(a_1t+\cdots+a_nt^n)=P_1(a_1,\ldots,a_n)t+\cdots+P_n(a_1,\ldots,a_n)t^n+\cdots$$ are polynomials in the $a_{ij}$. So $\X_n(f)$ is the Zariski-constructible subset of $\mathbb R^{dn}$ described by $P_1=\cdots=P_{n-1}=0$ and $P_n\neq0$ and $\X_n(f)$ is in $\AS$. Under the same identification, $\ac_f^n$ is given by the polynomial $P_n$ so that its graph is in $\AS$.
\end{rem}

\begin{thm}[{\cite[Theorem 7.11]{JBC2}}]
If $f,g:(\mathbb R^d,0)\rightarrow(\mathbb R,0)$ are two arc-analytically equivalent Nash function germs then $Z_f(T)=Z_g(T)$.
\end{thm}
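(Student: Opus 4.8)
The statement to prove is that if $f,g:(\mathbb R^d,0)\to(\mathbb R,0)$ are arc-analytically equivalent Nash function germs, then $Z_f(T)=Z_g(T)$. The plan is to show that the zeta function is insensitive to composition with an arc-analytic semialgebraic homeomorphism $\varphi$ whose Jacobian is bounded away from zero. By hypothesis $f=g\circ\varphi$ with $\varphi$ as in the definition of arc-analytic equivalence, so it suffices to construct, for each $n\ge 1$, an identification of the symbols $\left[\ac_f^n:\mathbb R^*\acts\X_n(f)\to\mathbb R^*\right]$ and $\left[\ac_g^n:\mathbb R^*\acts\X_n(g)\to\mathbb R^*\right]$ in $K_0(\AS^n_\mon)$, compatibly with the factor $\mathbb L^{-nd}$. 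The map $\varphi$ acts on arcs by composition, $\gamma\mapsto\varphi\circ\gamma$, and since $\varphi$ is arc-analytic this sends real analytic arcs to real analytic arcs; the key point is that it respects the order of contact and the angular component of $f$ versus $g$, because $f\circ\gamma=(g\circ\varphi)\circ\gamma=g\circ(\varphi\circ\gamma)$.

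First I would set up the comparison at the level of truncated arc spaces. The naive map $\gamma\mapsto\varphi\circ\gamma$ does not preserve polynomial truncation at order $n$, so one works instead with the spaces of genuine analytic (or formal) arcs and uses that $Z_f(T)$ can be computed equivalently on these, or one truncates $\varphi\circ\gamma$ at order $n$ and controls the error: writing $\gamma=a_1t+\cdots+a_nt^n$, the composition $\varphi\circ\gamma$ has a well-defined $n$-jet depending only on the $n$-jet of $\gamma$ (since $\varphi(0)=0$ forces each coefficient of $\varphi\circ\gamma$ to depend only on finitely many lower-order coefficients of $\gamma$), and $f\circ\gamma$ and $g\circ(\varphi\circ\gamma)$ agree to all orders, hence in particular their order and angular component agree. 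This gives a map $\Phi_n:\X_n(f)\to\X_n(g)$ at the jet level (after possibly passing to a bounded number of extra coefficients, i.e. working in $\X_n(f)\times\mathbb R^{dm}$ and using the lifting relation \ref{item:K0liftings} to discard the extra coordinates). The relation $\lambda\cdot\gamma(t)=\gamma(\lambda t)$ is preserved: $\varphi\circ(\lambda\cdot\gamma)=\lambda\cdot(\varphi\circ\gamma)$, so $\Phi_n$ is $\mathbb R^*$-equivariant, and it intertwines $\ac_f^n$ with $\ac_g^n$ by the contact-order computation above; moreover $\Phi_n$ has $\AS$-graph since $\varphi$ is semialgebraic and arc-analytic (hence its graph is $\AS$, and jet coefficients are $\AS$ in the inputs).

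Next I would argue $\Phi_n$ is a bijection onto $\X_n(g)$ with $\AS$-graph. Injectivity and surjectivity follow from the existence of the arc-analytic semialgebraic inverse $\varphi^{-1}$ with $|\det\d\varphi^{-1}|>c'$, guaranteed by the remark citing \cite[Theorem 3.5]{JBC1}: composition with $\varphi^{-1}$ gives the inverse jet map $\Psi_n:\X_n(g)\to\X_n(f)$, and $\Psi_n\circ\Phi_n$, $\Phi_n\circ\Psi_n$ are the identity on the relevant jet spaces (again up to extra coordinates removed via the lifting relation). The condition on the Jacobian is what ensures $\varphi$ does not collapse arcs or drop their order of contact — without it the order of $f\circ\gamma$ could jump and $\X_n$ would not be preserved. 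Finally, applying relation \ref{defn:K0ASmon}.(1) to $\Phi_n$ yields $\left[\ac_f^n:\mathbb R^*\acts\X_n(f)\to\mathbb R^*\right]=\left[\ac_g^n:\mathbb R^*\acts\X_n(g)\to\mathbb R^*\right]$ in $K_0(\AS^n_\mon)$; multiplying by $\mathbb L^{-nd}T^n$ and summing over $n$ gives $Z_f(T)=Z_g(T)$ in $\M\llbracket T\rrbracket$.

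The main obstacle is the failure of $\varphi$ to be polynomial: composition with an arc-analytic map is analytic but not algebraic, so the induced transformation on truncated (polynomial) arc spaces is only defined after truncation and introduces higher-order tails that must be absorbed. Handling this cleanly is exactly what relation \ref{item:K0liftings} is designed for — one works on $\X_n(f)\times\mathbb R^{dm}$ for a suitable $m$ depending on $n$ and the complexity of $\varphi$, equips it with the lifted action coming from the true composition $\varphi\circ\gamma$, checks that the symbol there is independent of the choice of lift, and then projects away the auxiliary coordinates. Verifying that these jet-truncation maps have $\AS$-graphs and are $\mathbb R^*$-equivariant is routine given that $\varphi$ has $\AS$-graph, but keeping track of how many extra coefficients are needed and that the construction is uniform enough to commute with the sum over $n$ requires care; this bookkeeping, rather than any deep new idea, is the technical heart of the argument.
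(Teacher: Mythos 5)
This theorem is not proved in the present paper: it is quoted from \cite[Theorem 7.11]{JBC2}, where the proof is a motivic change-of-variables argument in the style of Denef--Loeser (using that $\varphi$ becomes Nash after suitable blowings-up, the inverse mapping theorem of \cite{JBC1}, and a key lemma describing the induced maps on truncated arc spaces). Your proposal replaces this by a direct jet-level bijection, and that shortcut has a genuine gap. The map $\varphi$ is only semialgebraic and arc-analytic; it need not be analytic at the origin, so there is no Taylor expansion of $\varphi$ to compose with. Consequently your central claim --- that the $n$-jet of $\varphi\circ\gamma$ depends only on the $n$-jet of $\gamma$ (``since $\varphi(0)=0$ forces each coefficient \dots''), or even on some controlled finite jet, through an $\AS$ map --- is exactly what requires proof and is false in the naive form stated: for arc-analytic non-analytic maps the jets of $\varphi\circ\gamma$ are governed by the resolution of $\varphi$, not by formal composition. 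For the same reason your inverse $\Psi_n$ does not invert $\Phi_n$ at the truncated level: truncation does not commute with composition, so $\Psi_n\circ\Phi_n=\id$ fails on $n$-jets, and injectivity/surjectivity of $\Phi_n$ cannot be deduced merely from the existence of $\varphi^{-1}$.

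What one actually obtains, after stabilizing with finitely many extra coefficients, is not a bijection but (at best) a piecewise trivial fibration whose fibers are affine spaces of dimension controlled by $\ord_t\det\d\varphi(\gamma(t))$ along the arcs; proving this structure theorem for generically arc-analytic $\AS$ maps, and checking that the resulting classes in $K_0(\AS^n_\mon)$ agree, is the technical heart of \cite{JBC2} and cannot be absorbed by relation \ref{item:K0liftings}, which only identifies two lifted actions on an already given product $Y\times\mathbb R^m$ --- it does not produce the product/fibration decomposition. This is also where the hypothesis $|\det\d\varphi|>c$ genuinely enters: it forces the order of the Jacobian along arcs to vanish, so that the change of variables carries no $\mathbb L^{-e}$ correction and both sides can be normalized by the same $\mathbb L^{-nd}$. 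Your reading of that hypothesis (preservation of orders of contact) is off the mark, since $\ord f\gamma=\ord g(\varphi\gamma)$ is tautological from $f=g\circ\varphi$. So the overall strategy points in a reasonable direction, but the step ``composition with $\varphi$ induces an $\AS$ bijection on truncated arc spaces'' is precisely the missing change-of-variables machinery, not bookkeeping.
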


\begin{defn}[{\cite[Definition 6.6]{JBC2}}]
Let $f:(\mathbb R^d,0)\rightarrow(\mathbb R,0)$ be a Nash function germ. We define the modified zeta function of $f$ by $$\tilde Z_f(T)=Z_f(T)-\frac{\mathbb 1-Z_f^{\naive}(T)}{\mathbb 1-T}+\mathbb1$$ where $Z_f^{\naive}(T)$ is defined by applying $\alpha\mapsto\overline{\alpha}\cdot\mathbb1$ to the coefficients of $Z_f(T)$.
\end{defn}

\begin{rem}
Particularly, the modified zeta function is an invariant of the arc-analytic equivalence too. Moreover, by \cite[Corollary 6.14]{JBC2}, it encodes the same information as $Z_f(T)$:
$$Z_f(T)=\tilde Z_f(T)+\frac{\mathbb 1-\mathbb L^{-1}\tilde Z_f^{\naive}(T)}{\mathbb 1-\mathbb L^{-1}T}-\mathbb1$$
\end{rem}

\section{The modified zeta function of a non-degenerate polynomial}\label{sect:nondegenerate}
\begin{defn}\label{defn:NewPol}
Let $f=\sum_{\nu\in\mathbb N^d}c_\nu x^\nu\in\mathbb R[x_1,\ldots,x_d]$. We define the Newton polyhedron of $f$ by $$\Gamma_f=\Conv\left(\bigcup_{\substack{\nu\in\mathbb N^d\\c_\nu\neq0}}\left(\nu+\mathbb R_{\ge0}^d\right)\right)$$
\end{defn}

\begin{defn}
For $\tau$ a face of $\Gamma_f$, we set $f_\tau(x)=\sum_{\nu\in\tau}c_\nu x^\nu$.
\end{defn}

\begin{defn}
We define the supporting function $m:\mathbb R_{\ge0}^d\rightarrow\mathbb R_{\ge0}$ by $m(k)=\inf\left\{k\cdot x,\,x\in\Gamma_f\right\}$.
\end{defn}

\begin{defn}
The trace of $k\in\mathbb R_{\ge0}^d$ is $\tau(k)=\left\{x\in\Gamma_f,\,k\cdot x=m(k)\right\}$.
\end{defn}

\begin{defn}
We define the dual cone of a face $\tau$ of $\Gamma_f$ by $\sigma(\tau)=\left\{k\in\mathbb R_{\ge0}^d,\,\tau(k)=\tau\right\}$.
\end{defn}

\begin{notation}
We denote by $\Gamma_f^c$ the set of compact faces of $\Gamma_f$.
\end{notation}

\begin{defn}\label{defn:NDpol}
A polynomial $f\in\mathbb R[x_1,\ldots,x_d]$ is non-degenerate (with respect to its Newton polyhedron) if $$\forall\tau\in\Gamma_f^c,\,\left\{x\in(\mathbb R^*)^d,\,\forall i=1,\ldots,d,\,\frac{\partial f_\tau}{\partial x_i}(x)=0\right\}=\varnothing$$
\end{defn}

\begin{rem}
For a face $\tau\in\Gamma_f^c$, notice that $f_\tau$ is weighted homogeneous and hence, by Euler formula, $$\left\{x\in(\mathbb R^*)^d,\,\frac{\partial f_\tau}{\partial x_1}(x)=\cdots=\frac{\partial f_\tau}{\partial x_d}(x)=0\right\}=\left\{x\in(\mathbb R^*)^d,\,f_\tau(x)=\frac{\partial f_\tau}{\partial x_1}(x)=\cdots=\frac{\partial f_\tau}{\partial x_d}(x)=0\right\}$$
\end{rem}

\begin{lemma}[{\cite[Proposition 3.13]{Rai12}}]
For $\tau\in\Gamma_f^c$ and $k\in\sigma(\tau)\cap\mathbb N^d$, we set $$\left[f_\tau:(\mathbb R^*)^d\setminus f_\tau^{-1}(0)\rightarrow\mathbb R^*\right]:=\left[f_\tau:\mathbb R^*\acts\left((\mathbb R^*)^d\setminus f_\tau^{-1}(0)\right)\rightarrow\mathbb R^*\right]\in K_0\left(\AS_\mon^{m(k)}\right)$$ where the action is given by $\lambda\cdot(x_i)_i=(\lambda^{k_i}x_i)_i$. \\
Then $\left[f_\tau:(\mathbb R^*)^d\setminus f_\tau^{-1}(0)\rightarrow\mathbb R^*\right]\in K_0(\AS_\mon)$ doesn't depend on the choice of $k\in\sigma(\tau)\cap\mathbb N^d$.
\end{lemma}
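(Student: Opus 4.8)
The plan is to reduce the claimed independence to a statement in the Grothendieck ring $K_0(\AS_\mon)$ involving relation \ref{item:K0liftings}. The key observation is that for two choices $k, k' \in \sigma(\tau)\cap\mathbb N^d$, the two classes
$$\left[f_\tau:\mathbb R^*\acts_{k}\left((\mathbb R^*)^d\setminus f_\tau^{-1}(0)\right)\rightarrow\mathbb R^*\right]\in K_0(\AS_\mon^{m(k)})\quad\text{and}\quad\left[f_\tau:\mathbb R^*\acts_{k'}\left((\mathbb R^*)^d\setminus f_\tau^{-1}(0)\right)\rightarrow\mathbb R^*\right]\in K_0(\AS_\mon^{m(k')})$$
carry the same underlying angular-component map $f_\tau$ but a priori distinct $\mathbb R^*$-actions $\lambda\cdot(x_i)_i=(\lambda^{k_i}x_i)_i$ and $\lambda\cdot(x_i)_i=(\lambda^{k'_i}x_i)_i$; note first that both are admissible, i.e. $f_\tau(\lambda^{k_i}x_i)=\lambda^{m(k)}f_\tau(x)$ and $f_\tau(\lambda^{k'_i}x_i)=\lambda^{m(k')}f_\tau(x)$, because $\tau = \tau(k) = \tau(k')$ means every exponent $\nu$ appearing in $f_\tau$ satisfies $k\cdot\nu = m(k)$ and $k'\cdot\nu = m(k')$.

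First I would handle the case where $k$ and $k'$ lie in the \emph{same} open cone $\sigma(\tau)$ and, moreover, are such that one can write a one-parameter family interpolating the two actions. Concretely, since $\sigma(\tau)$ is a convex cone, for any $k, k'\in\sigma(\tau)\cap\mathbb N^d$ the segment $[k,k']$ lies in $\sigma(\tau)$; the idea is to realize $(\mathbb R^*)^d\setminus f_\tau^{-1}(0)$ as a product $Y\times(\mathbb R^*)^{d-1}$ (or after a suitable monomial change of coordinates, as $Y\times\mathbb R^m$ up to an $\AS$-piecewise decomposition splitting off sign components), in such a way that $f_\tau$ factors through the projection to a base $Y$ on which both $\mathbb R^*$-actions descend to the \emph{same} action $\tau_0$ — this is possible precisely because $f_\tau$ depends on the torus variables only through the weighted-homogeneity character $\lambda\mapsto\lambda^{m(k)}$, so once we pass to the appropriate level in the inductive system $K_0(\AS_\mon)=\varinjlim K_0(\AS_\mon^n)$ where $m(k)$ and $m(k')$ become comparable, relation \ref{item:K0liftings} identifies the two classes as two liftings of the same base action.

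The main obstacle will be arranging the coordinates so that relation \ref{item:K0liftings} genuinely applies: one needs a common $Y$ and an $\mathbb R^*$-equivariant $\AS$-isomorphism (relation \ref{defn:K0ASmon}.(1)) putting \emph{both} classes in the form $\psi = \varphi_Y\circ\pr_Y$ for the \emph{same} $\varphi_Y$, with the two actions being liftings of a common $\tau$ on $Y$. I expect this requires (a) choosing a vector in $\sigma(\tau)\cap\mathbb N^d$ with one coordinate equal to $1$, or more robustly using a $\mathrm{GL}_d(\mathbb Z)$ monomial substitution to straighten the action, and (b) reducing to the level $\lcm(m(k),m(k'))$ in the inductive limit via the maps $\lambda\cdot_{K_0(\AS^n_\mon)}x=\lambda^k\cdot_{K_0(\AS^m_\mon)}x$, which is exactly what makes the equality land in $K_0(\AS_\mon)$ rather than in any fixed $K_0(\AS_\mon^n)$. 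Once the family is set up, the conclusion is immediate from \ref{item:K0liftings}; alternatively, one cites \cite[Proposition 3.13]{Rai12} directly, the present lemma being its transcription to the $\AS$-setting, and only checks that the proof there uses nothing beyond the relations listed in Definition \ref{defn:K0ASmon}.
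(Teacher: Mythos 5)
The paper gives no proof of this lemma: it is imported, transposed to the $\AS$-setting, from \cite[Proposition 3.13]{Rai12}, and the bracketed citation \emph{is} the proof. Your fallback option --- cite Raibaut and verify that his argument uses only the relations of Definition \ref{defn:K0ASmon} --- is therefore exactly the paper's move, and your preliminary reductions are correct: both actions are admissible because $k\cdot\nu=m(k)$ for every exponent $\nu$ of $f_\tau$, and rescaling $k$ and $k'$ by positive integers inside the inductive system (which replaces $k$ by $jk$, still in $\sigma(\tau)$) legitimately reduces to the case $m(k)=m(k')$, which is why the statement only holds in $K_0(\AS_\mon)$ and not at a fixed level.

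If, however, the direct sketch is meant to stand on its own, the step you defer is the entire content of the lemma, and as formulated it does not quite go through. In the original coordinates $f_\tau$ does not factor through a projection of $(\mathbb R^*)^d$ onto a proper subset of the coordinates unless $\tau$ lies in a coordinate subspace, so there is no decomposition $Y\times\mathbb R^m$ with $\psi=\varphi_Y\pr_Y$ to which \ref{item:K0liftings} applies directly. One genuinely needs your step (a): a $\mathrm{GL}_d(\mathbb Z)$ monomial substitution adapted to the lattice $M$ spanned by the differences of exponents of $f_\tau$; since $k-k'\perp M$ after the rescaling, both actions then agree on the coordinates carrying the non-monomial part of $f_\tau$, but $f_\tau$ still retains a monomial factor in the remaining coordinates which must be absorbed by an equivariant $\AS$-isomorphism (relation (1) of Definition \ref{defn:K0ASmon}) before the lifting relation applies, and the leftover torus factor $(\mathbb R^*)^{d-r}$ is not an affine space $\mathbb R^m$, so an additional additivity argument is needed. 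In the only situation where this paper actually uses the lemma --- $f$ convenient weighted homogeneous, $\tau=\tau_I$ contained in the coordinate subspace of the $x_i$ with $i\in I$, and $\sigma(\tau_I)$ spanned by $w$ and the $e_j$ with $j\notin I$ --- your decomposition does work essentially verbatim, because after rescaling $k$ and $k'$ differ only in the directions $e_j$, $j\notin I$, which act trivially on the factor carrying $f_{\tau_I}$. So the proposal is acceptable as a plan whose general case is correctly delegated to \cite{Rai12}, but it is not yet a self-contained proof.
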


\begin{notation}
For $m\in\mathbb Z$, we denote by $\mathcal F^m\M$ the subgroup of $\M$ spanned by the elements of the form $[S]\mathbb L^{-i}$ where $i-\dim S\ge m$. It defines a filtration and we denote by $\widehat\M$ the completion of $\M$ with respect to this filtration.
\end{notation}

\begin{rem}
Notice that $\beta\overline{\vphantom{1em}\ \cdot\ }$ and $\beta F^\pm$ factorises through the image of $\M\rightarrow\widehat\M$ since the kernel is $\cap\mathcal F^m\M$ and if $\alpha\in\cap\mathcal F^m\M$ then $\beta(\overline{\alpha})=0$ and $\beta F^\varepsilon(\alpha)=0$ by consideration on the degree. We deduce that it also holds for the Euler characteristic with compact support $\chi_c$ by evaluating the virtual Poincaré polynomials in $u=-1$.
\end{rem}

A similar formula to the one of the following result has been proved in different settings: by A. N. Varchenko \cite{Var76} for the zeta function of the monodromy, by J. Denef and F. Loeser \cite[\S5]{DL92} for the topological zeta function, by J. Denef and K. Hoornaert \cite[Theorem 4.2]{DH01} for the Igusa $p$-adic zeta function and by G. Guibert for the motivic zeta function \cite[Proposition 2.1.3]{Gui02}. The proof of G. Fichou and T. Fukui \cite{FF16} already relies on an adaptation of this construction to the virtual Poincaré polynomial.

\begin{thm}[{\cite[Theorem 5.15]{JBC2}}]\label{thm:nondegZfclassique}
Let $f\in\mathbb R[x_1,\ldots,x_d]$ be non-degenerate, then the following equality holds in $\widehat\M\llbracket T\rrbracket$:
$$Z_f(T)=\sum_{\tau\in\Gamma_f^c}\left(\left[f_\tau:(\mathbb R^*)^d\setminus f_\tau^{-1}(0)\rightarrow\mathbb R^*\right]+\left[\pr_2:f_\tau^{-1}(0)\cap(\mathbb R^*)^d\times\mathbb R^*\rightarrow\mathbb R^*\right]\frac{\mathbb L^{-1}T}{\mathbb 1-\mathbb L^{-1}T}\right)S_{\sigma(\tau)}(T)$$
where $\mathbb R^*$ acts on $f_\tau^{-1}(0)\cap(\mathbb R^*)^d\times\mathbb R^*$ by $\lambda\cdot(x,t)=(x,\lambda t)$ and where $S_{\sigma(\tau)}(T)=\sum_{k\in\sigma(\tau)\cap\mathbb N^d}\mathbb L^{-|k|}T^{m(k)}$ with $|k|=\sum_{i=1}^dk_i$.
\end{thm}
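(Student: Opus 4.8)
My plan is to carry over to the real, $\mathbb R^*$-equivariant setting the classical computation of the zeta function of a non-degenerate singularity through a toric modification adapted to the Newton polyhedron, in the spirit of Varchenko \cite{Var76}, Denef--Loeser \cite{DL92} and Guibert \cite{Gui02}, while keeping track at each stage of the reparametrisation action $\lambda\cdot\gamma(t)=\gamma(\lambda t)$ and of the angular component. First I would fix a smooth simplicial subdivision $\Sigma$ of $\mathbb R_{\ge0}^d$ refining the collection of the dual cones $\{\sigma(\tau)\}_\tau$ of \emph{all} faces of $\Gamma_f$; the associated toric morphism $\pi:X_\Sigma\to\mathbb A^d_{\mathbb R}$ is then proper, birational and defined over $\mathbb R$, its real locus and all maps in sight are $\AS$, and --- crucially --- the non-degeneracy of $f$ guarantees that the total transform under $\pi$ of $f^{-1}(0)$ together with the coordinate hyperplanes is a simple normal crossings divisor. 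The numerical data are the standard ones: along the exceptional component $E_k$ attached to a ray $\mathbb R_{\ge0}k$ of $\Sigma$ ($k$ primitive integral) one has $\ord_{E_k}(f\circ\pi)=m(k)$ and $\ord_{E_k}(\mathrm{Jac}\,\pi)=|k|-1$, while along the strict transform of $\{f=0\}$ these orders are $1$ and $0$.

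Next I would push the defining series of $Z_f(T)$ forward through $\pi$ by the appropriate change of variables formula --- the $\AS$-valued, $\mathbb R^*$-equivariant analogue of the Denef--Loeser change of variables in the framework of \cite{JBC2} --- thereby reducing to the simple normal crossings situation, and then expand as in the Denef--Loeser formula for an SNC divisor. A lifted arc of $Z_f$ meets $\pi^{-1}(0)$ inside the torus orbit of a well-determined cone $\sigma\in\Sigma$, say with primitive generators $k_1,\dots,k_r$; the order of contact of the arc with each $E_{k_j}$ then contributes a geometric series $\frac{\mathbb L^{-|k_j|}T^{m(k_j)}}{\mathbb 1-\mathbb L^{-|k_j|}T^{m(k_j)}}$, a possible extra order of contact with the strict transform of $\{f=0\}$ contributes a further factor $\frac{\mathbb L^{-1}T}{\mathbb 1-\mathbb L^{-1}T}$, and the part of the arc transverse to these components provides a ``base'' class governed by the leading form $f_\tau$ of $f$ in the directions of $\sigma$, where $\tau=\tau(k)$ for any $k$ in the relative interior of $\sigma$. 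According to whether the transverse arc avoids or meets the strict transform of $\{f=0\}$, this base class is $\left[f_\tau:(\mathbb R^*)^d\setminus f_\tau^{-1}(0)\rightarrow\mathbb R^*\right]$ or $\left[\pr_2:f_\tau^{-1}(0)\cap(\mathbb R^*)^d\times\mathbb R^*\rightarrow\mathbb R^*\right]$: indeed the leading coefficients $(b_1,\dots,b_d)\in(\mathbb R^*)^d$ of such an arc satisfy $f(\gamma(t))=f_\tau(b)\,t^{m(k)}+\cdots$, the reparametrisation action induces on them exactly $\lambda\cdot b=(\lambda^{k_i}b_i)_i$, and the higher Taylor coefficients of the transverse arc, together with the chart-dependence, get absorbed using relation \ref{item:K0liftings} and the lemma asserting that $\left[f_\tau:(\mathbb R^*)^d\setminus f_\tau^{-1}(0)\rightarrow\mathbb R^*\right]$ does not depend on $k\in\sigma(\tau)\cap\mathbb N^d$.

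It remains to reorganise the resulting sum over the cones of $\Sigma$. Since every arc in $\X_n(f)$ passes through the origin, only cones $\sigma$ whose relative interior lies in $\mathrm{int}(\mathbb R_{\ge0}^d)$ contribute; and because $\sigma(\tau)\subseteq\mathrm{int}(\mathbb R_{\ge0}^d)$ exactly when $\tau$ is compact, one has $\mathrm{int}(\mathbb R_{\ge0}^d)=\bigsqcup_{\tau\in\Gamma_f^c}\sigma(\tau)$, so each relevant $\sigma$ lies in a unique $\sigma(\tau)$ with $\tau\in\Gamma_f^c$. As $m$ and $|\cdot|$ are linear on $\sigma(\tau)$, a lattice point $k=\sum_j a_j k_j$ of $\sigma^\circ$ with $a_j\ge1$ satisfies $m(k)=\sum_j a_j m(k_j)$ and $|k|=\sum_j a_j|k_j|$, whence the elementary identity $\sum_{k\in\sigma^\circ\cap\mathbb N^d}\mathbb L^{-|k|}T^{m(k)}=\prod_{j=1}^r\frac{\mathbb L^{-|k_j|}T^{m(k_j)}}{\mathbb 1-\mathbb L^{-|k_j|}T^{m(k_j)}}$; summing over all cones $\sigma$ of $\Sigma$ with $\sigma^\circ\subseteq\sigma(\tau)$ collapses the products of geometric series into $S_{\sigma(\tau)}(T)$, and the auxiliary subdivision $\Sigma$ disappears from the answer. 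That the whole expression makes sense in $\widehat\M\llbracket T\rrbracket$ follows because, after truncating in $T$, within each $\sigma(\tau)$ one has $\mathbb L^{-|k|}\in\mathcal F^{|k|}\M$ with $|k|\to\infty$, so the remaining sums lie in arbitrarily deep steps of the filtration.

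The step I expect to be the main obstacle is the $\AS$- and equivariance-bookkeeping concealed in the change of variables and in the identification of the torus-orbit classes. Over $\mathbb R$ a monomial change of coordinates $x=y^A$ need not preserve positivity, so each toric chart splits into several $\AS$-strata according to signs, and the $\mathbb R^*$-linearisations it induces on them are a priori chart-dependent; one must verify that all these pieces reassemble into the intrinsic symbols $\left[f_\tau:(\mathbb R^*)^d\setminus f_\tau^{-1}(0)\rightarrow\mathbb R^*\right]$ and $\left[\pr_2:f_\tau^{-1}(0)\cap(\mathbb R^*)^d\times\mathbb R^*\rightarrow\mathbb R^*\right]$, which is precisely where the relations defining $K_0(\AS_\mon)$ --- chiefly \ref{item:K0liftings} --- do the essential work. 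I would organise this by first performing the computation on the open torus $(\mathbb R^*)^d$, where non-degeneracy makes $f=f_{\Gamma_f}$ submersive onto $\mathbb R^*$, and then treating the boundary strata of $X_\Sigma$ --- and the arcs contained in coordinate hyperplanes --- by induction on the number of variables, each such stratum being again a product of a torus with an affine space on which $f\circ\pi$ restricts to a monomial times a non-degenerate polynomial in fewer variables.
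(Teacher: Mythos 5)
First, a caveat: the paper does not actually prove Theorem~\ref{thm:nondegZfclassique}; it quotes it from \cite[Theorem 5.15]{JBC2}, so there is no in-paper proof to compare against. Measured against the proof in that reference --- which, like the proofs of Denef--Hoornaert \cite{DH01} and Guibert \cite{Gui02} that the surrounding discussion points to, works \emph{directly on the arc space} --- your route is genuinely different: you resolve by a smooth toric modification and invoke an equivariant change-of-variables formula, in the spirit of Varchenko \cite{Var76}. The direct argument instead stratifies $\X_n(f)$ by the multi-order $k=(\ord_t\gamma_1,\dots,\ord_t\gamma_d)$ of the coordinates of the arc: on the stratum of multi-order $k$ one has $f(\gamma(t))=f_{\tau(k)}(a)\,t^{m(k)}+\cdots$ with $a=(\ac\gamma_i)_i\in(\mathbb R^*)^d$, which yields the symbol $\left[f_\tau:(\mathbb R^*)^d\setminus f_\tau^{-1}(0)\rightarrow\mathbb R^*\right]$ at order $T^{m(k)}$ when $f_{\tau(k)}(a)\neq0$, while non-degeneracy makes $f_{\tau(k)}$ submersive along its zero locus in the torus and produces the factor $\frac{\mathbb L^{-1}T}{\mathbb 1-\mathbb L^{-1}T}$ when $f_{\tau(k)}(a)=0$; summing over $k\in\sigma(\tau)\cap\mathbb N^d$ gives $S_{\sigma(\tau)}(T)$ \emph{as written}, i.e.\ as a lattice-point sum, with no subdivision and no recombination step. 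This reading is corroborated by the paper's ``Digression'' subsection, which presents simplicial subdivision of the dual cones as an \emph{extra} step one could perform afterwards to obtain rationality --- redundant if the theorem were already proved by subdivision. The direct route spares you the resolution, the change-of-variables formula, and the sign/chart bookkeeping over $\mathbb R$ that you correctly identify as the main obstacle of your approach; your route buys rationality for free but at the cost of precisely that machinery.

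Beyond the difference of method, two points in your sketch need repair. First, discarding the boundary cones ``since every arc in $\X_n(f)$ passes through the origin'' is not a valid justification: an element $\gamma=a_1t+\cdots+a_nt^n$ of $\X_n(f)$ may have a coordinate identically zero (e.g.\ $(t,0)\in\X_2(x^2+y^3)$), and such arcs are assigned no $k\in\mathbb N_{>0}^d$. In the truncated arc space they form the stratum $\ord\gamma_i>n$, whose contribution must be matched against the tail $k_i>n$ of the lattice sum when the relevant face is compact, and shown to be negligible --- i.e.\ to lie in $\mathcal F^m\M$ for large $m$ --- when it is not; together with the infinite sums over $k$, this is exactly why the identity holds in $\widehat\M\llbracket T\rrbracket$ rather than in $\M\llbracket T\rrbracket$, whereas your sketch invokes the completion only for convergence of geometric series. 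Second, the assertion that non-degeneracy (which in Definition~\ref{defn:NDpol} concerns only the \emph{compact} faces) makes the total transform simple normal crossings holds only above a neighbourhood of the origin and after the strata corresponding to non-compact faces have been set aside; for non-convenient $f$ the divisors dual to rays in the boundary of the octant require a separate (completion-based) argument. Neither issue is fatal, but both are precisely where $\widehat\M$ does real work and must be made explicit for the sketch to become a proof.
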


\begin{rem}
Notice that for the purpose of the previous theorem, we may have simply set $\widehat\M=\M\left[\frac{1}{\mathbb 1-\mathbb L^{-a}},\,a\in\mathbb N_{>0}\right]$.
\end{rem}

The proof of Theorem \ref{thm:Main} relies on the formula induced by Theorem \ref{thm:nondegZfclassique} for the modified zeta function.
\begin{cor}\label{cor:nondegZfmod}
Let $f$ be non-degenerate, then the following equality holds in $\widehat\M\llbracket T\rrbracket$:
$$\hspace{-1cm}\tilde Z_f(T)=\sum_{\tau\in\Gamma_f^c}\left(\left[f_\tau:(\mathbb R^*)^d\setminus f_\tau^{-1}(0)\rightarrow \mathbb R^*\right]-\left[\pr_2:f_\tau^{-1}(0)\cap(\mathbb R^*)^d\times\mathbb R^*\rightarrow\mathbb R^*\right]+\frac{(\mathbb L-\mathbb 1)^d}{\mathbb 1-T}\right)S_{\sigma(\tau)}(T)-\frac{T}{\mathbb1-T}$$
\end{cor}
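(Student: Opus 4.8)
The plan is to derive the stated identity for $\tilde Z_f(T)$ by substituting the formula of Theorem \ref{thm:nondegZfclassique} into the definition of the modified zeta function, namely $\tilde Z_f(T)=Z_f(T)-\frac{\mathbb 1-Z_f^{\naive}(T)}{\mathbb 1-T}+\mathbb 1$. So the first step is to compute $Z_f^{\naive}(T)$, which is obtained by applying $\alpha\mapsto\overline\alpha\cdot\mathbb 1$ to every coefficient of $Z_f(T)$. Since the map $\overline{\vphantom{1em}\ \cdot\ }$ is a morphism of $\M_\AS$-modules and sends $\left[\varphi_X:\mathbb R^*\acts X\rightarrow\mathbb R^*\right]$ to $[X]$, applying it to the formula of Theorem \ref{thm:nondegZfclassique} gives
\begin{equation*}
Z_f^{\naive}(T)=\sum_{\tau\in\Gamma_f^c}\left(\left[(\mathbb R^*)^d\setminus f_\tau^{-1}(0)\right]+\left[f_\tau^{-1}(0)\cap(\mathbb R^*)^d\right]\frac{\mathbb L^{-1}T}{\mathbb 1-\mathbb L^{-1}T}\right)S_{\sigma(\tau)}(T)\cdot\mathbb 1,
\end{equation*}
where I have used that $\overline{[\pr_2:f_\tau^{-1}(0)\cap(\mathbb R^*)^d\times\mathbb R^*\rightarrow\mathbb R^*]}=[f_\tau^{-1}(0)\cap(\mathbb R^*)^d]\cdot[\mathbb R^*]=[f_\tau^{-1}(0)\cap(\mathbb R^*)^d](\mathbb L-\mathbb 1)$ up to a factor; one has to keep careful track of whether the naive class carries the extra $[\mathbb R^*]=\mathbb L-\mathbb 1$, but in any case $S_{\sigma(\tau)}(T)$ and the rational fractions are already naive, so they pass through $\overline{\vphantom{1em}\ \cdot\ }$ untouched.

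Next I would combine the two sums over $\Gamma_f^c$. Writing $a_\tau=\left[(\mathbb R^*)^d\setminus f_\tau^{-1}(0)\right]$ and $b_\tau=\left[f_\tau^{-1}(0)\cap(\mathbb R^*)^d\right]$ in $K_0(\AS)$, and noting $a_\tau+b_\tau=[(\mathbb R^*)^d]=(\mathbb L-\mathbb 1)^d$, the coefficient of $S_{\sigma(\tau)}(T)$ in $\mathbb 1-Z_f^{\naive}(T)$, divided by $\mathbb 1-T$, has to be balanced against the coefficient of $S_{\sigma(\tau)}(T)$ in $Z_f(T)$. The key algebraic manipulation is the identity
\begin{equation*}
1-\frac{\mathbb L^{-1}T}{\mathbb 1-\mathbb L^{-1}T}\cdot\frac{1}{\mathbb 1-T}\cdot(\mathbb L-\mathbb 1)\quad\text{together with}\quad \frac{1}{\mathbb 1-T}=\frac{\mathbb 1}{\mathbb 1-T},
\end{equation*}
which, after clearing denominators, reduces the $b_\tau$-terms: $\left[\pr_2\right]\frac{\mathbb L^{-1}T}{\mathbb 1-\mathbb L^{-1}T}$ coming from $Z_f(T)$ and the naive $b_\tau\frac{\mathbb L^{-1}T}{\mathbb 1-\mathbb L^{-1}T}$ coming from $-Z_f^{\naive}(T)/(\mathbb 1-T)$ should collapse, using $(\mathbb L-\mathbb 1)\frac{\mathbb L^{-1}T}{\mathbb 1-\mathbb L^{-1}T}=\frac{T-\mathbb L^{-1}T}{\mathbb 1-\mathbb L^{-1}T}=\frac{T(\mathbb 1-\mathbb L^{-1})}{\mathbb 1-\mathbb L^{-1}T}$, so that the combination telescopes to the single term $-\left[\pr_2:f_\tau^{-1}(0)\cap(\mathbb R^*)^d\times\mathbb R^*\rightarrow\mathbb R^*\right]$ plus a correction $\frac{(\mathbb L-\mathbb 1)^d}{\mathbb 1-T}$ coming from $a_\tau+b_\tau=(\mathbb L-\mathbb 1)^d$. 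Finally, the leftover $\frac{\mathbb 1}{\mathbb 1-T}$ from the $-\frac{\mathbb 1}{\mathbb 1-T}$ part of the definition, once the geometric series $\sum_\tau S_{\sigma(\tau)}(T)$ is recognized (the cones $\sigma(\tau)$ for $\tau\in\Gamma_f^c$ tile $\mathbb R_{\ge 0}^d$, and $f$ convenient/non-degenerate forces $m(k)\ge 1$ for $k\ne 0$, though for this corollary I only need the bookkeeping, not convenience), combines with the $+\mathbb 1$ to produce the trailing $-\frac{T}{\mathbb 1-T}$.

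The step I expect to be the main obstacle is the careful bookkeeping of the constant/naive terms: specifically, checking that $\sum_{\tau\in\Gamma_f^c}S_{\sigma(\tau)}(T)$ together with the ``$+\mathbb 1$'' and the ``$-\frac{\mathbb 1-Z_f^{\naive}}{\mathbb 1-T}$'' really produces exactly $-\frac{T}{\mathbb 1-T}$ and not an extra constant, and that the coefficient $(\mathbb L-\mathbb 1)^d$ appears with the right sign after the cancellation between $Z_f$ and $-Z_f^{\naive}/(\mathbb 1-T)$. This is where one must be scrupulous about whether the naive realization of $\left[\pr_2:f_\tau^{-1}(0)\cap(\mathbb R^*)^d\times\mathbb R^*\rightarrow\mathbb R^*\right]$ includes the factor $(\mathbb L-\mathbb 1)$ from the trivial $\mathbb R^*$-factor or whether that factor is already absorbed; an off-by-$(\mathbb L-\mathbb 1)$ error here would change $\frac{(\mathbb L-\mathbb 1)^d}{\mathbb 1-T}$ into something else. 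Everything else is a formal computation in $\widehat\M\llbracket T\rrbracket$, using only that $\overline{\vphantom{1em}\ \cdot\ }$ is additive and multiplicative on the relevant classes and that $\frac{1}{\mathbb 1-\mathbb L^{-1}T}$, $\frac{1}{\mathbb 1-T}$ make sense in the completion $\widehat\M$.
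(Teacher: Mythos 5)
Your route is the same as the paper's: substitute the formula of Theorem \ref{thm:nondegZfclassique} into the definition of $\tilde Z_f$, compute $Z_f^{\naive}$ via $\overline{\vphantom{1em}\ \cdot\ }$, and cancel. Two points you leave open or get wrong need fixing. First, the ambiguity you flag as the ``main obstacle'' resolves as follows: $\overline{\left[\pr_2:f_\tau^{-1}(0)\cap(\mathbb R^*)^d\times\mathbb R^*\rightarrow\mathbb R^*\right]}=\left[f_\tau^{-1}(0)\cap(\mathbb R^*)^d\times\mathbb R^*\right]=\left[f_\tau^{-1}(0)\cap(\mathbb R^*)^d\right](\mathbb L-\mathbb 1)$, so your displayed formula for $Z_f^{\naive}(T)$ is missing the factor $(\mathbb L-\mathbb 1)$ on the second term; with it, writing $a_\tau+b_\tau=(\mathbb L-\mathbb 1)^d$ as you do, the $\tau$-coefficient of $Z_f^{\naive}$ becomes $(\mathbb L-\mathbb 1)^d+b_\tau\bigl((\mathbb L-\mathbb 1)\frac{\mathbb L^{-1}T}{\mathbb 1-\mathbb L^{-1}T}-\mathbb 1\bigr)=(\mathbb L-\mathbb 1)^d+b_\tau\frac{T-\mathbb 1}{\mathbb 1-\mathbb L^{-1}T}$, and after dividing by $\mathbb 1-T$ and adding back the $b_\tau\frac{\mathbb L^{-1}T}{\mathbb 1-\mathbb L^{-1}T}$ term of $Z_f(T)$ one gets exactly $\frac{(\mathbb L-\mathbb 1)^d}{\mathbb 1-T}-b_\tau$ via $\frac{\mathbb L^{-1}T-\mathbb 1}{\mathbb 1-\mathbb L^{-1}T}=-\mathbb 1$. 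This is the paper's computation.

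Second, your account of the trailing term is mistaken: the $-\frac{T}{\mathbb 1-T}$ does \emph{not} come from recognizing $\sum_{\tau\in\Gamma_f^c}S_{\sigma(\tau)}(T)$ as a geometric series or from any tiling of $\mathbb R_{\ge0}^d$ by the dual cones. It comes solely from the constant part of the definition: $-\frac{\mathbb 1}{\mathbb 1-T}+\mathbb 1=-\frac{T}{\mathbb 1-T}$. The sums $S_{\sigma(\tau)}(T)$ stay attached to their respective $\tau$-coefficients throughout and are never summed over $\tau$; in particular you are right that convenience plays no role here, but for a simpler reason than the one you give.
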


\begin{rem}
By the very definition of $\mathbb L$ and $\mathbb 1$, we have $$(\mathbb L-\mathbb1)^d=\left[\pr_2:\mathbb R^*\acts(\mathbb R^*)^d\times\mathbb R^*\rightarrow\mathbb R^*\right]$$
\end{rem}

\begin{proof}[Proof of Corollary \ref{cor:nondegZfmod}]
\begin{align*}
Z_f^{\naive}(T)&=\sum_{\tau\in\Gamma_f^c}\left(\overline{\left[(\mathbb R^*)^d\setminus f_\tau^{-1}(0)\right]}\cdot\mathbb1+\overline{\left[f_\tau^{-1}(0)\cap(\mathbb R^*)^d\right]}\cdot(\mathbb L-\mathbb 1)\frac{\mathbb L^{-1}T}{\mathbb 1-\mathbb L^{-1}T}\right)S_{\sigma(\tau)}(T) \\
&=\sum_{\tau\in\Gamma_f^c}\left((\mathbb L-\mathbb 1)^d-\overline{\left[f_\tau^{-1}(0)\cap(\mathbb R^*)^d\right]}\cdot\mathbb1+\overline{\left[f_\tau^{-1}(0)\cap(\mathbb R^*)^d\right]}\cdot(\mathbb L-\mathbb 1)\frac{\mathbb L^{-1}T}{\mathbb 1-\mathbb L^{-1}T}\right)S_{\sigma(\tau)}(T) \\
&=\sum_{\tau\in\Gamma_f^c}\left((\mathbb L-\mathbb 1)^d+\left[\pr_2:f_\tau^{-1}(0)\cap(\mathbb R^*)^d\times\mathbb R^*\rightarrow\mathbb R^*\right]\frac{T-\mathbb 1}{\mathbb 1-\mathbb L^{-1}T}\right)S_{\sigma(\tau)}(T) \\
\end{align*}
Thus
\begin{align*}
\tilde Z_f(T)&=Z_f(T)-\frac{Z_f^{\naive}(T)-\mathbb1}{T-\mathbb1}+\mathbb 1\\
&=\sum_{\tau\in\Gamma_f^c}\vast(\left[f_\tau:(\mathbb R^*)^d\setminus f_\tau^{-1}(0)\rightarrow\mathbb R^*\right]+\left[\pr_2:f_\tau^{-1}(0)\cap(\mathbb R^*)^d\times\mathbb R^*\rightarrow\mathbb R^*\right]\frac{\mathbb L^{-1}T}{\mathbb 1-\mathbb L^{-1}T}\\
&\quad\quad\quad+\frac{(\mathbb L-\mathbb 1)^d}{\mathbb 1-T}-\frac{\left[\pr_2:f_\tau^{-1}(0)\cap(\mathbb R^*)^d\times\mathbb R^*\rightarrow\mathbb R^*\right]}{\mathbb 1-\mathbb L^{-1}T}\vast)S_{\sigma(\tau)}(T)-\frac{T}{\mathbb 1-T}\\
&=\sum_{\tau\in\Gamma_f^c}\left(\left[f_\tau:(\mathbb R^*)^d\setminus f_\tau^{-1}(0)\rightarrow\mathbb R^*\right]-\left[\pr_2:f_\tau^{-1}(0)\cap(\mathbb R^*)^d\times\mathbb R^*\rightarrow\mathbb R^*\right]+\frac{(\mathbb L-\mathbb 1)^d}{\mathbb 1-T}\right)S_{\sigma(\tau)}(T)\\
&\quad\quad-\frac{T}{\mathbb1-T}
\end{align*}
\end{proof}

\section{Application to convenient non-degenerate weighted homogeneous polynomials}\label{sect:proof}
Throughout this section, we use the notation $\tilde Z_f(T)=A_f(T)+B_f(T)-\frac{T}{\mathbb1-T}$ where $$A_f(T)=\sum_{\tau\in\Gamma_f^c}\left(\left[f_\tau:(\mathbb R^*)^d\setminus f_\tau^{-1}(0)\rightarrow\mathbb R^*\right]-\left[\pr_2:f_\tau^{-1}(0)\cap(\mathbb R^*)^d\times\mathbb R^*\rightarrow\mathbb R^*\right]\right)S_{\sigma(\tau)}(T)$$ and $$B_f(T)=\frac{(\mathbb L-\mathbb 1)^d}{\mathbb 1-T}\sum_{\tau\in\Gamma_f^c}S_{\sigma(\tau)}(T)$$

We will see that the modified zeta function of a convenient non-degenerate weighted homogeneous polynomial is very similar to the one obtained for a Brieskorn polynomial \cite[\S8]{JBC2} using the convolution formula.

\begin{rem}\label{rem:simpcones}
When $f$ is a convenient non-degenerate weighted homogeneous polynomial, the cones dual to the compact faces of its Newton polyhedron are simplicial. \\
More precisely such a cone is spanned by $w$ and at most $d-1$ vectors of the natural basis, where $w$ is the primitive vector normal to the compact facet. Indeed, a compact face of the Newton polyhedron of $f$ is the convex hull of $x_i^{\delta_i}$ for $i$ in a subset $I$ of $\{1,\ldots,d\}$ (where we identify the monomials and the exponents). The dual cone of such a face is then spanned by $w$ and $e_i$ for $i\notin I$.
\end{rem}

\begin{notation}
For $I\subset\{1,\ldots,d\}$ we set $\delta_I=\lcm\left\{\delta_i,\,i\in I\right\}$.
\end{notation}

\begin{rem}\label{rem:wvector}
The vector $w$ defined in Remark \ref{rem:simpcones} as the primitive vector normal to the unique compact facet of $\Gamma_f$ is the weight vector of $f$. It is given by $$w=\left(\frac{\delta_{\{1,\ldots,d\}}}{\delta_i}\right)_{i=1,\ldots,d}$$
\end{rem}

\subsection{Computation of $B_f(T)$}
\begin{lemma}\label{lem:Bf}
Let $f\in\mathbb R[x_1,\ldots,x_d]$ be a convenient non-degenerate weighted homogeneous polynomial. We denote by $\delta_i$ the exponent of the pure monomial $x_i$ of $f$. Then
$$B_f(T)-\frac{T}{\mathbb1-T}=-\sum_{m\ge1}\mathbb L^{-\sum_{i=1}^d\left\lfloor\frac{m}{\delta_i}\right\rfloor}T^m$$
\end{lemma}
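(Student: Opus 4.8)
The plan is to compute $\sum_{\tau\in\Gamma_f^c}S_{\sigma(\tau)}(T)$ explicitly and then multiply by $\frac{(\mathbb L-\mathbb 1)^d}{\mathbb 1-T}$. By Remark \ref{rem:simpcones}, the compact faces of $\Gamma_f$ are indexed by the subsets $I\subset\{1,\ldots,d\}$ for which $\operatorname{Conv}(x_i^{\delta_i}:i\in I)$ is a face, and the dual cone of such a face is the simplicial cone spanned by $w=\left(\delta_{\{1,\ldots,d\}}/\delta_i\right)_i$ together with $\{e_i:i\notin I\}$. The first step is therefore to observe that the cones $\sigma(\tau)$, $\tau\in\Gamma_f^c$, together with the cones dual to the noncompact faces (which are contained in coordinate hyperplanes), partition $\mathbb R_{\ge0}^d$; equivalently, every $k\in\mathbb N^d$ with all coordinates positive lies in the interior of exactly one $\sigma(\tau)$, while a $k$ on the boundary is accounted for by the face-poset incidences. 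Concretely, I would argue that $\bigsqcup_{\tau\in\Gamma_f^c}(\sigma(\tau)\cap\mathbb N^d)$, counted with the correct multiplicities coming from the face lattice, reassembles to $\{k\in\mathbb N^d:k_i>0 \text{ for at least one }i\}$ — more precisely, using the simplicial description, a ray $\mathbb N_{>0}e_i$ is not in any $\sigma(\tau)$ (it's dual to a noncompact facet), so summing $S_{\sigma(\tau)}$ over all compact $\tau$ recovers exactly the lattice points "visible from $w$".

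The cleanest way to carry this out: for each $k\in\mathbb N^d$, write $k=\alpha w+\sum_{i\notin I}\beta_i e_i$ in the simplicial cone over the face indexed by $I$; since $w=(\delta_{\{1,\ldots,d\}}/\delta_i)_i$ has $m(w)=\delta_{\{1,\ldots,d\}}$ (the value of the supporting function on the compact facet), one gets $m(\alpha w+\sum\beta_ie_i)=\alpha\,\delta_{\{1,\ldots,d\}}$ because the $e_i$ lie in $\sigma(\tau)$ and contribute $0$ to the supporting function relative to that cone's apex — wait, more carefully, $m(k)$ for $k$ in the cone over the facet $I$ equals $\sum_{i\in I}k_i\delta_i$ suitably; I would instead parametrize directly by $m=m(k)$ and sum. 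The key computation is then the identity, for each fixed $m\ge 1$,
$$\sum_{\tau\in\Gamma_f^c}\ \sum_{\substack{k\in\sigma(\tau)\cap\mathbb N^d\\ m(k)=m}}\mathbb L^{-|k|}=\mathbb L^{-d}\,\mathbb L^{-\sum_{i=1}^d\lfloor m/\delta_i\rfloor}\cdot\frac{1}{(\text{something})}$$
— this is where the geometric-series bookkeeping over the simplicial cone, $\sum_{\beta_i\ge 0}\mathbb L^{-\beta_i}=\frac{\mathbb 1}{\mathbb 1-\mathbb L^{-1}}$, combines with the lattice count of $\{k:m(k)=m\}$ on the facet. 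Multiplying by $(\mathbb L-\mathbb 1)^d=\left(\frac{\mathbb 1-\mathbb L^{-1}}{\mathbb L^{-1}}\right)^d\mathbb L^{-d}$ is designed to cancel all the $\frac{\mathbb 1}{\mathbb 1-\mathbb L^{-1}}$ factors and the extra $\mathbb L^{-d}$, leaving a clean power of $\mathbb L$; the factor $\frac{1}{\mathbb 1-T}$ absorbs the $\alpha$-summation (the free parameter along $w$, which shifts $m\mapsto m+\delta_{\{1,\ldots,d\}}$), after which a telescoping/reindexing in $m$ produces $-\sum_{m\ge 1}\mathbb L^{-\sum_i\lfloor m/\delta_i\rfloor}T^m$, and the stray $-\frac{T}{\mathbb 1-T}$ on the left cancels the $m$-terms that would otherwise have the wrong sign or the $m=0$ discrepancy.

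The main obstacle will be getting the lattice-point count on the boundary strata of the cone decomposition exactly right: because the cones $\sigma(\tau)$ share faces, a naive sum of $S_{\sigma(\tau)}(T)$ overcounts lattice points on lower-dimensional cones, and one must check that the inclusion–exclusion over the face lattice of $\Gamma_f$ (equivalently, over subsets $I$) collapses correctly — this is precisely why the exponent ends up being $\sum_i\lfloor m/\delta_i\rfloor$ rather than something larger, since $\lfloor m/\delta_i\rfloor$ counts the multiples of $\delta_i$ up to $m$, i.e. the lattice structure along the $i$-th direction. I expect the right bookkeeping device is to fix $k$ and determine which $\sigma(\tau)$ contain it from the combinatorics of which $k_i$ are "extremal", then sum the contribution $\mathbb L^{-|k|}T^{m(k)}$ once per containing cone and see the alternating signs in $(\mathbb L-\mathbb 1)^d$ perform the inclusion–exclusion automatically. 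I would verify the final formula on the monomial case $f=x_1^{\delta_1}+\cdots+x_d^{\delta_d}$ (where $\Gamma_f$ has a single compact facet and the computation is the Brieskorn one of \cite[\S8]{JBC2}) to make sure the normalization is correct before writing the general argument.
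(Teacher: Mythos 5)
You start from the right place --- the identification $\sum_{\tau\in\Gamma_f^c}S_{\sigma(\tau)}(T)=\sum_{k}\mathbb L^{-|k|}T^{m(k)}$ with $k$ running over $(\mathbb N\setminus\{0\})^d$ --- but two things go wrong after that. First, the inclusion--exclusion over the face lattice that you flag as ``the main obstacle'' is a non-issue: by definition $\sigma(\tau)=\{k:\tau(k)=\tau\}$, so the dual cones of distinct faces are already pairwise disjoint, and since $f$ is convenient every $k$ with all coordinates positive has compact trace; the union over compact faces is an honest partition of $(\mathbb N_{>0})^d$ with no multiplicities to correct. Second, and more seriously, the central computation is left unperformed: your key identity contains a literal placeholder, and the intermediate claim that $m(k)=\sum_{i\in I}k_i\delta_i$ ``suitably'' for $k$ in the cone over the face $\tau_I$ is false --- the correct statement is that $m(k)=k_i\delta_i$ for any single $i\in I$ (these values all coincide on $\sigma(\tau_I)$), i.e.\ $m(k)=\min_i k_i\delta_i$ for every $k\in(\mathbb N_{>0})^d$. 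You notice the problem (``wait, more carefully'') but never resolve it, and without it neither the floor functions nor the minus sign in the target formula is actually derived.

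The missing idea that makes the lemma a short computation is complementary counting. Because of the factor $\frac{\mathbb 1}{\mathbb 1-T}$, the coefficient of $T^m$ in $B_f(T)$ is $(\mathbb L-\mathbb 1)^d\sum_{m(k)\le m}\mathbb L^{-|k|}$, which one writes as the full sum minus the sum over $m(k)>m$. The full sum is $\prod_{i=1}^d\sum_{k_i\ge1}\mathbb L^{-k_i}=(\mathbb L-\mathbb 1)^{-d}$, contributing the $\mathbb 1$ that is exactly cancelled by $-\frac{T}{\mathbb 1-T}$. The condition $m(k)>m$ decouples coordinatewise --- $\min_i k_i\delta_i>m$ iff $k_i\ge\left\lfloor\frac{m}{\delta_i}\right\rfloor+1$ for every $i$ --- so that sum is the product of geometric series $\prod_{i=1}^d\frac{\mathbb L^{-\lfloor m/\delta_i\rfloor-1}}{\mathbb 1-\mathbb L^{-1}}$, and multiplying by $(\mathbb L-\mathbb 1)^d$ leaves exactly $\mathbb L^{-\sum_i\lfloor m/\delta_i\rfloor}$. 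Your per-cone parametrization $k=\alpha w+\sum_{j\notin I}\beta_je_j$ could be pushed through (it is essentially Lemma \ref{lem:Stau} summed over $I$, followed by a telescoping), but as written the proposal asserts the cancellation rather than exhibiting it.
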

\begin{proof}
Let $k\in(\mathbb N\setminus\{0\})^d$. Since $(\mathbb R\setminus\{0\})^d=\bigsqcup_{\tau\in\Gamma_f^c}\sigma(\tau)$, there is a unique $\tau\in\Gamma_f^c$ such that $k\in\sigma(\tau)\cap\mathbb N^d$. By definition of $\sigma(\tau)$, $m(k)=k\cdot x$ for $x\in\tau$. By Remark \ref{rem:simpcones}, at least one pure monomial $x_i^{\delta_i}$ lies in $\tau$ and then $m(k)=k\cdot(0,\ldots,\delta_i,\ldots,0)$.

Thus the coefficient of $T^m$ in $B_f(T)-\frac{T}{\mathbb1-T}$ is
\begin{align*} 
(\mathbb L-\mathbb 1)^d\sum_{\substack{k\in(\mathbb N\setminus\{0\})^d\\m(k)\le m}}\mathbb L^{-|k|}-\mathbb 1 &=(\mathbb L-\mathbb 1)^d\sum_{k\in(\mathbb N\setminus\{0\})^d}\mathbb L^{-|k|}-(\mathbb L-\mathbb 1)^d\sum_{\substack{k\in(\mathbb N\setminus\{0\})^d\\m(k)>m}}\mathbb L^{-|k|} -\mathbb1\\
&=\mathbb 1-(\mathbb L-\mathbb 1)^d\prod_{i=1}^d\sum_{k_i\ge\left\lfloor\frac{m}{\delta_i}\right\rfloor+1}\mathbb L^{-k_i}-\mathbb1 \\
&=-(\mathbb L-\mathbb 1)^d\prod_{i=1}^d\frac{\mathbb L^{-\left\lfloor\frac{m}{\delta_i}\right\rfloor-1}}{\mathbb 1-\mathbb L^{-1}} \\
&=-(\mathbb L-\mathbb 1)^d\frac{\prod_{i=1}^d\mathbb L^{-\left\lfloor\frac{m}{\delta_i}\right\rfloor}}{(\mathbb L-\mathbb 1)^d}
\end{align*}
\end{proof}

\begin{cor}\label{cor:nonmult}
Let $f\in\mathbb R[x_1,\ldots,x_d]$ be a convenient non-degenerate weighted homogeneous polynomial. We denote by $\delta_i$ the exponent of the pure monomial $x_i$ of $f$. \\
Denote by $a_n$ the coefficients of $\tilde Z_f(T)$, so that $$\tilde Z_f(T)=\sum_{n\ge1}a_nT^n$$
Let $m\in\mathbb N_{>0}$ such that $\forall i,\,\delta_i\nmid m$. Then $$a_m=-\mathbb L^{-\sum_{i=1}^d\left\lfloor\frac{m}{\delta_i}\right\rfloor}$$
\end{cor}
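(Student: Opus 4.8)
The plan is to split $\tilde Z_f(T)=A_f(T)+B_f(T)-\frac{T}{\mathbb 1-T}$ and argue that under the hypothesis $\delta_i\nmid m$ for all $i$, the $A_f$-part contributes nothing to the coefficient of $T^m$, while the remaining part $B_f(T)-\frac{T}{\mathbb 1-T}$ contributes exactly $-\mathbb L^{-\sum_i\lfloor m/\delta_i\rfloor}$ by Lemma \ref{lem:Bf}. So the entire content is that $A_f(T)$ has vanishing $T^m$-coefficient.

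First I would recall from Remark \ref{rem:simpcones} that every compact face $\tau\in\Gamma_f^c$ is the convex hull of the pure monomials $x_i^{\delta_i}$ for $i$ ranging over some subset $I=I_\tau\subseteq\{1,\ldots,d\}$, and its dual cone $\sigma(\tau)$ is spanned by the weight vector $w$ together with the basis vectors $e_i$, $i\notin I$. Hence $\sigma(\tau)\cap\mathbb N^d$ consists of the vectors $k=\ell w+\sum_{i\notin I}\mu_i e_i$ with appropriate integrality constraints, and for such $k$ one computes $m(k)=k\cdot(\text{any vertex of }\tau)=\ell\, w\cdot(\delta_j e_j)=\ell\,w_j\delta_j$ for $j\in I$; by Remark \ref{rem:wvector} this common value is $\ell\,\delta_{\{1,\ldots,d\}}$, independent of $j\in I$. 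In particular $m(k)$ is always a multiple of $w_j\delta_j=\delta_{\{1,\ldots,d\}}$ for each $j\in I_\tau$. Here is the key point: the facet $\tau_0=\Gamma_f^c$ of maximal dimension has $I_{\tau_0}=\{1,\ldots,d\}$, and I claim every compact face $\tau$ with $I_\tau\neq\{1,\ldots,d\}$ also has $m(k)$ divisible by $\delta_i$ for at least one $i$ — but that is not quite enough, since we need divisibility by one of the $\delta_i$'s, and we only get divisibility by $w_i\delta_i$. Let me instead track $m(k)$ more carefully: for $k\in\sigma(\tau)\cap\mathbb N^d$ with $\tau$ having $i\in I_\tau$, we have $m(k)=k\cdot\delta_i e_i=k_i\delta_i$, a multiple of $\delta_i$. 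Therefore the monomial $T^{m(k)}$ appearing in $S_{\sigma(\tau)}(T)$ always has exponent divisible by $\delta_i$ for every $i\in I_\tau$, and $I_\tau$ is nonempty for every compact face. Consequently, if $m$ is such that $\delta_i\nmid m$ for all $i$, then no term $T^{m(k)}$ with $m(k)=m$ occurs in any $S_{\sigma(\tau)}(T)$, so the coefficient of $T^m$ in $A_f(T)=\sum_\tau(\cdots)S_{\sigma(\tau)}(T)$ vanishes.

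Thus I would conclude: $[T^m]\tilde Z_f(T)=[T^m]\bigl(B_f(T)-\frac{T}{\mathbb 1-T}\bigr)=-\mathbb L^{-\sum_{i=1}^d\lfloor m/\delta_i\rfloor}$ by Lemma \ref{lem:Bf}. The only subtlety to get right is the claim that every compact face $\tau$ of $\Gamma_f$ contains at least one pure monomial vertex $x_i^{\delta_i}$, which follows because $f$ is convenient (so all such vertices lie on $\Gamma_f$) and weighted homogeneous with a single compact facet — more directly, Remark \ref{rem:simpcones} already records that a compact face is $\Conv\{x_i^{\delta_i}: i\in I\}$ for some nonempty $I$, and for $k\in\sigma(\tau)$ we read off $m(k)=k_i\delta_i$ for any $i\in I$. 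That identity $m(k)=k_i\delta_i$ for $i\in I_\tau$ is the one genuine thing to verify, and it is immediate from the definition of $m$ and the fact that $\delta_i e_i$ is a vertex of $\tau$. I expect no serious obstacle; the proof is essentially the observation that the exponents appearing in the series $S_{\sigma(\tau)}(T)$ are forced to be divisible by some $\delta_i$.
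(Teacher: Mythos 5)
Your proof is correct and is essentially the paper's own argument: the paper likewise observes that for $k\in\sigma(\tau)\cap\mathbb N^d$ one has $m(k)=k\cdot(0,\ldots,\delta_i,\ldots,0)=k_i\delta_i$ for some $i$ with $x_i^{\delta_i}\in\tau$, so the hypothesis $\delta_i\nmid m$ for all $i$ kills the $T^m$-coefficient of $A_f(T)$, and Lemma \ref{lem:Bf} gives the rest. The brief detour through the weight vector $w$ in your write-up is unnecessary, but you correctly discard it in favor of the right identity.
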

\begin{proof} 
Under our assumptions $m(k)=k\cdot(0,\ldots,\delta_i,\ldots,0)$ for some $i$. Since $\forall i,\,\delta_i\nmid m$ we get that $A_f(T)$ doesn't contribute to the coefficient of degree $m$.
\end{proof}

\subsection{Computation of $A_f(T)$}
\begin{lemma}\label{lem:Stau}
Let $f\in\mathbb R[x_1,\ldots,x_d]$ be a convenient non-degenerate weighted homogeneous polynomial. For $\varnothing\neq I\subset\{1,\ldots,d\}$ we denote by $\tau_I$ the face of $\Gamma_f^c$ intersecting the $x_i$-axis exactly for $i\in I$. Then
$$S_{\sigma(\tau_I)}(T)=\frac{\mathbb 1}{(\mathbb L-\mathbb 1)^{d-|I|}}\sum_{r\ge1}\mathbb L^{-\sum_{i=1}^d\left\lfloor\frac{r\delta_I}{\delta_i}\right\rfloor}T^{r\delta_I}$$
\end{lemma}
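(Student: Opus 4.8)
The plan is to describe the cone $\sigma(\tau_I)$ explicitly and then sum the geometric-type series $S_{\sigma(\tau_I)}(T) = \sum_{k \in \sigma(\tau_I) \cap \mathbb N^d} \mathbb L^{-|k|} T^{m(k)}$ over the lattice points of that cone. By Remark \ref{rem:simpcones} and Remark \ref{rem:wvector}, the face $\tau_I$ is the convex hull of the points $\delta_i e_i$ for $i \in I$ (identifying monomials with exponents), and its dual cone is spanned by the weight vector $w = \left(\delta_{\{1,\ldots,d\}}/\delta_i\right)_i$ together with the basis vectors $e_i$ for $i \notin I$. Since $f$ is convenient and non-degenerate and weighted homogeneous, this cone is simplicial, so every $k \in \sigma(\tau_I) \cap \mathbb N^d$ can be written uniquely as $k = \alpha w + \sum_{i \notin I} \beta_i e_i$ — but care is needed here, because $w$ need not be primitive as an element spanning a ray of the lattice cone, and the lattice points of the cone are not simply the $\mathbb N$-combinations of $w$ and the $e_i$. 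The first step is therefore to pin down the semigroup $\sigma(\tau_I) \cap \mathbb N^d$ precisely.

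The cleaner route I would take is to avoid decomposing $k$ and instead parametrize directly by the value $m(k)$. On the cone $\sigma(\tau_I)$ the supporting function is $m(k) = k \cdot (\delta_i e_i) = \delta_i k_i$ for any $i \in I$ (all these agree on the cone, which is exactly the condition $\tau(k) = \tau_I$). Writing $m = m(k)$, the condition $k \in \sigma(\tau_I) \cap \mathbb N^d$ becomes: $\delta_i \mid m$ and $k_i = m/\delta_i$ for every $i \in I$, and the remaining coordinates $k_i$ for $i \notin I$ range over $\mathbb N_{>0}$ subject to the inequalities that force $\tau(k)$ to be exactly $\tau_I$ and not a larger face. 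The key observation is that for $i \notin I$ the constraint "$x_i$-axis is not hit by $\tau(k)$" amounts to $\delta_i k_i > m$, i.e. $k_i \ge \lfloor m/\delta_i \rfloor + 1$, and conversely any such choice of $(k_i)_{i \notin I}$ with $k_i \ge \lfloor m/\delta_i\rfloor + 1$ does land in $\sigma(\tau_I)$. Moreover $m$ ranges exactly over the multiples of $\delta_I = \lcm\{\delta_i : i \in I\}$, since we need $\delta_i \mid m$ for all $i \in I$; write $m = r\delta_I$ with $r \ge 1$.

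With this parametrization the sum factors: for fixed $m = r\delta_I$,
\begin{align*}
\sum_{\substack{k \in \sigma(\tau_I)\cap\mathbb N^d \\ m(k) = m}} \mathbb L^{-|k|}
&= \mathbb L^{-\sum_{i \in I} m/\delta_i} \prod_{i \notin I} \sum_{k_i \ge \lfloor m/\delta_i\rfloor + 1} \mathbb L^{-k_i} \\
&= \mathbb L^{-\sum_{i \in I} m/\delta_i} \prod_{i \notin I} \frac{\mathbb L^{-\lfloor m/\delta_i\rfloor - 1}}{\mathbb 1 - \mathbb L^{-1}} \\
&= \frac{\mathbb 1}{(\mathbb L - \mathbb 1)^{d-|I|}} \, \mathbb L^{-\sum_{i \in I} m/\delta_i - \sum_{i \notin I}\lfloor m/\delta_i\rfloor},
\end{align*}
and since $\delta_i \mid m$ for $i \in I$ we have $m/\delta_i = \lfloor m/\delta_i \rfloor$, so the exponent of $\mathbb L$ is exactly $\sum_{i=1}^d \lfloor m/\delta_i\rfloor = \sum_{i=1}^d \lfloor r\delta_I/\delta_i\rfloor$. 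Summing over $r \ge 1$ (equivalently over $m = r\delta_I$) and recording $T^{m} = T^{r\delta_I}$ gives the claimed formula. I would present the computation essentially in this order: (1) identify $\tau_I$ and its dual cone via the two remarks; (2) describe $m(k)$ on the cone and translate membership in $\sigma(\tau_I) \cap \mathbb N^d$ into the coordinate conditions $k_i = m/\delta_i$ for $i \in I$ and $k_i > m/\delta_i$ for $i \notin I$, noting $m$ must be a multiple of $\delta_I$; (3) factor and sum the geometric series as above.

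The main obstacle is step (2): proving that the characterization of $\sigma(\tau_I) \cap \mathbb N^d$ is exactly the set of those coordinate conditions — in particular that nothing is lost or over-counted because the cone, although simplicial over $\mathbb R$, may fail to be unimodular, so $w$ contributes a genuinely non-basic direction. One must argue that a lattice point $k \in \mathbb N^d$ lies in $\sigma(\tau_I)$ precisely when (i) $\delta_i k_i$ is the common minimum value over $i \in I$ and (ii) $\delta_j k_j$ strictly exceeds that value for $j \notin I$; condition (i) forces all $\delta_i k_i$ equal for $i \in I$, hence $\delta_I \mid m$ and $k_i = m/\delta_i$, while (ii) gives the strict inequalities. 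This is really a statement about the face structure of $\Gamma_f$ — that each compact face is a standard simplex on a subset of the pure-monomial vertices $\delta_i e_i$ — which follows from convenience plus weighted homogeneity (there is a single compact facet with normal $w$, and its faces are the $\tau_I$); I would spell that out using Remark \ref{rem:simpcones} rather than re-derive it. Everything after that is the routine geometric-series manipulation shown above.
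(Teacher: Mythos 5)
Your proposal is correct and follows essentially the same route as the paper: identify $\sigma(\tau_I)$ via Remark \ref{rem:simpcones}, characterize its lattice points by the coordinate conditions $k_i\delta_i=m$ for $i\in I$ and $k_j\ge\lfloor m/\delta_j\rfloor+1$ for $j\notin I$ (so $\delta_I\mid m$), and sum the resulting geometric series. The unimodularity worry you raise is moot precisely because, like the paper, you never decompose $k$ along the generators but describe $\sigma(\tau_I)\cap\mathbb N^d$ directly.
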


\begin{proof}
Notice that, by Remark \ref{rem:simpcones}, $\sigma(\tau_I)$ is the cone spanned by $w$ and $e_i$ for $i\notin I$. \\
Denote by $a_m$ the coefficients of $S_{\sigma(\tau_I)}$ so that $S_{\sigma(\tau_I)}(T)=\sum_{m\ge1}a_mT^m$. \\
Fix $m\in\mathbb N\setminus\{0\}$. We may assume that $\delta_I|m$ otherwise $a_m=0$. \\
Let $k\in(\mathbb N\setminus\{0\})^d$. Then $k\in\sigma(\tau_I)$ and satisfies $m(k)=m$ if and only if for $i\in I$, $k_i\delta_i=m$ and for $j\notin I$, $k_j\ge\left\lfloor\frac{m}{\delta_j}\right\rfloor+1$. \\
Thus 
\begin{align*}
a_m&=\sum_{\substack{k_i\delta_i=m,i\in I\\k_j\ge\left\lfloor\frac{m}{\delta_j}\right\rfloor+1,\,j\notin I}}\mathbb L^{-|k|}\\
&=\mathbb L^{-\sum_{i\in I}\frac{m}{\delta_i}}\sum_{\substack{k_j\ge\left\lfloor\frac{m}{\delta_j}\right\rfloor+1\\j\notin I}}\mathbb L^{-k_j}\\
&=\mathbb L^{-\sum_{i\in I}\frac{m}{\delta_i}}\prod_{j\notin I}\frac{\mathbb L^{-\left\lfloor\frac{m}{\delta_j}\right\rfloor-1}}{\mathbb 1-\mathbb L^{-1}}\\
&=\frac{\mathbb L^{-\sum_{i=1}^d\left\lfloor\frac{m}{\delta_i}\right\rfloor}}{(\mathbb L-\mathbb1)^{d-|I|}}
\end{align*}
\end{proof}

Fix $I\subset\{1,\ldots,d\}$. Notice that the face $\tau_I$ is entirely included in the intersection of the coordinate hyperplanes $\{x_j=0\}$ for $j\notin I$. Hence, $f_{\tau_I}\in\mathbb R[x_i,i\in I]$, i.e. the variables $x_j,j\notin I$, don't appear in the expansion of $f_{\tau_I}$. Hence, by \ref{item:K0liftings}, we have the following equality
$$\left[f_{\tau_I}:(\mathbb R^*)^d\setminus f_{\tau_I}^{-1}(0)\rightarrow\mathbb R^*\right]=(\mathbb L-\mathbb1)^{d-|I|}\left[f_{\tau_I}:(\mathbb R^*)^{|I|}\setminus f_{\tau_I}^{-1}(0)\rightarrow\mathbb R^*\right]$$
where, in the RHS, we identify $(\mathbb R^*)^{|I|}$ with $\left\{(x_1,\ldots,x_d)\in\mathbb R^d,\,\forall j\notin I,x_j=0,\,\forall i\in I,x_i\neq0\right\}$.

Using the structure of $\M_\AS$-algebra, we also have $$\left[\pr_2:f_{\tau_I}^{-1}(0)\cap(\mathbb R^*)^d\times\mathbb R^*\rightarrow\mathbb R^*\right]=(\mathbb L-\mathbb1)^{d-|I|}\left[\pr_2:f_{\tau_I}^{-1}(0)\cap(\mathbb R^*)^{|I|}\times\mathbb R^*\rightarrow\mathbb R^*\right]$$

We derive the following corollary from Lemma \ref{lem:Stau} and these last two equalities.

\begin{cor}\label{cor:ComputeAf}
Let $f\in\mathbb R[x_1,\ldots,x_d]$ be a convenient non-degenerate weighted homogeneous polynomial. With the previous notations, we obtain
$$\hspace{-1cm}A_f(T)=\sum_{\varnothing\neq I\subset\{1,\ldots,d\}}\left(\left[f_{\tau_I}:(\mathbb R^*)^{|I|}\setminus f_{\tau_I}^{-1}(0)\rightarrow\mathbb R^*\right]-\left[\pr_2:f_{\tau_I}^{-1}(0)\cap(\mathbb R^*)^{|I|}\times\mathbb R^*\rightarrow\mathbb R^*\right]\right)\sum_{r\ge1}\mathbb L^{-\sum_{i=1}^d\left\lfloor\frac{r\delta_I}{\delta_i}\right\rfloor}T^{r\delta_I}$$
\end{cor}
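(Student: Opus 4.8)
The plan is to derive Corollary \ref{cor:ComputeAf} directly from Theorem \ref{thm:nondegZfclassique} (via the notation $A_f(T)$ introduced at the beginning of this section) by exploiting the special combinatorial structure of $\Gamma_f^c$ for a convenient non-degenerate weighted homogeneous polynomial, which was recorded in Remark \ref{rem:simpcones}. The starting point is the definition
$$A_f(T)=\sum_{\tau\in\Gamma_f^c}\left(\left[f_\tau:(\mathbb R^*)^d\setminus f_\tau^{-1}(0)\rightarrow\mathbb R^*\right]-\left[\pr_2:f_\tau^{-1}(0)\cap(\mathbb R^*)^d\times\mathbb R^*\rightarrow\mathbb R^*\right]\right)S_{\sigma(\tau)}(T).$$
First I would observe that, since $f$ is convenient and weighted homogeneous, every compact face $\tau$ of $\Gamma_f$ is the convex hull of a nonempty subset $\{x_i^{\delta_i}\mid i\in I\}$ of the pure monomials, so that the compact faces are exactly the faces $\tau_I$ for $\varnothing\neq I\subset\{1,\ldots,d\}$. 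Thus the sum over $\tau\in\Gamma_f^c$ becomes a sum over nonempty subsets $I$, and I would substitute the value of $S_{\sigma(\tau_I)}(T)$ from Lemma \ref{lem:Stau}.

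Next I would rewrite each of the two bracketed classes attached to $\tau_I$. Since $\tau_I$ lies in the intersection of the coordinate hyperplanes $\{x_j=0\}$ for $j\notin I$, the polynomial $f_{\tau_I}$ only involves the variables $x_i$ with $i\in I$; so $f_{\tau_I}:(\mathbb R^*)^d\setminus f_{\tau_I}^{-1}(0)\rightarrow\mathbb R^*$ factors through the projection forgetting the coordinates $x_j$, $j\notin I$. Applying relation \ref{item:K0liftings} of Definition \ref{defn:K0ASmon} (the independence of the class on the choice of lifting of the $\mathbb R^*$-action along a product with an affine space) yields
$$\left[f_{\tau_I}:(\mathbb R^*)^d\setminus f_{\tau_I}^{-1}(0)\rightarrow\mathbb R^*\right]=(\mathbb L-\mathbb1)^{d-|I|}\left[f_{\tau_I}:(\mathbb R^*)^{|I|}\setminus f_{\tau_I}^{-1}(0)\rightarrow\mathbb R^*\right],$$
and similarly, using the $\M_\AS$-algebra structure of $\M$ (relation (7) of Definition \ref{defn:K0ASmon}), one gets the analogous factorization of the class $\left[\pr_2:f_{\tau_I}^{-1}(0)\cap(\mathbb R^*)^d\times\mathbb R^*\rightarrow\mathbb R^*\right]$ with the same factor $(\mathbb L-\mathbb1)^{d-|I|}$. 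These are precisely the two displayed equalities stated in the excerpt immediately before the corollary, so at this point they may simply be invoked.

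Finally I would combine the pieces: plugging these two factorizations and Lemma \ref{lem:Stau} into the $I$-indexed sum, the factor $(\mathbb L-\mathbb1)^{d-|I|}$ coming from each class cancels exactly against the factor $\frac{\mathbb1}{(\mathbb L-\mathbb1)^{d-|I|}}$ in front of $S_{\sigma(\tau_I)}(T)=\frac{\mathbb1}{(\mathbb L-\mathbb1)^{d-|I|}}\sum_{r\ge1}\mathbb L^{-\sum_{i=1}^d\lfloor r\delta_I/\delta_i\rfloor}T^{r\delta_I}$, leaving precisely
$$A_f(T)=\sum_{\varnothing\neq I\subset\{1,\ldots,d\}}\left(\left[f_{\tau_I}:(\mathbb R^*)^{|I|}\setminus f_{\tau_I}^{-1}(0)\rightarrow\mathbb R^*\right]-\left[\pr_2:f_{\tau_I}^{-1}(0)\cap(\mathbb R^*)^{|I|}\times\mathbb R^*\rightarrow\mathbb R^*\right]\right)\sum_{r\ge1}\mathbb L^{-\sum_{i=1}^d\left\lfloor\frac{r\delta_I}{\delta_i}\right\rfloor}T^{r\delta_I}.$$
The computation is essentially bookkeeping once the structural facts are in place; the only genuinely delicate point — and the one I would state carefully — is the cancellation of the $(\mathbb L-\mathbb1)^{d-|I|}$ factors, which relies on the fact that \emph{both} the monomial-action class and the zero-locus class acquire \emph{the same} power of $(\mathbb L-\mathbb1)$ under the reduction from $(\mathbb R^*)^d$ to $(\mathbb R^*)^{|I|}$, so that no residual power of $(\mathbb L-\mathbb1)$ survives in the denominator. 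Since this reduction appeals to relation \ref{item:K0liftings} for the first class and to the $K_0(\AS)$-module structure for the second, I would make sure the two arguments are presented separately rather than conflated. No obstacle beyond this; the corollary follows immediately from Theorem \ref{thm:nondegZfclassique}, Lemma \ref{lem:Stau}, and the two displayed identities.
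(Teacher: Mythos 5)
Your argument is correct and coincides with the paper's own derivation: the compact faces are exactly the $\tau_I$ by Remark \ref{rem:simpcones}, the two classes factor out the same power $(\mathbb L-\mathbb 1)^{d-|I|}$ (via relation \ref{item:K0liftings} for the first and the $K_0(\AS)$-algebra structure for the second), and this cancels the denominator in Lemma \ref{lem:Stau}. The only blemish is your citation of a nonexistent ``relation (7)'' of Definition \ref{defn:K0ASmon} for the second factorization --- the relevant relation is the last one (the $K_0(\AS)$-algebra structure) --- but this does not affect the argument.
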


\subsection{Digression: a rationality formula}
It is known that we may deduce from Theorem \ref{thm:nondegZfclassique} a rationality formula for $Z_f(T)$ (or $\tilde Z_f(T)$) by using simplicial (or regular) subdivisions of the dual cones. However, the formulae previously obtained in this section allow us to give a rationality formula for $\tilde Z_f(T)$ without using a subdivision when $f$ is a convenient non-degenerate weighted homogeneous polynomial.

Indeed, for $\varnothing\neq I\subset\{1,\ldots,d\}$ and by writing the Euclidean division of $r$ by $\frac{\delta_{\{1,\ldots,d\}}}{\delta_I}$, we get
\begin{align*}
\sum_{r\ge1}\mathbb L^{-\sum_{i=1}^d\left\lfloor\frac{r\delta_I}{\delta_i}\right\rfloor}T^{r\delta_I}&=\sum_{q\ge0}\sum_{s=0}^{\frac{\delta_{\{1,\ldots,d\}}}{\delta_I}-1}\mathbb L^{-\sum_{i=1}^d\left(q\frac{\delta_{\{1,\ldots,d\}}}{\delta_i}+\left\lfloor\frac{s\delta_I}{\delta_i}\right\rfloor\right)}T^{q\delta_{\{1,\ldots,d\}}+s\delta_I}-\mathbb1 \\
&=\frac{\sum_{s=0}^{\frac{\delta_{\{1,\ldots,d\}}}{\delta_I}-1}\mathbb L^{-\sum_{i=1}^d\left\lfloor\frac{s\delta_I}{\delta_i}\right\rfloor}T^{s\delta_I}}{\mathbb 1-\mathbb L^{-\sum_{i=1}^d\frac{\delta_{\{1,\ldots,d\}}}{\delta_i}}T^{\delta_{\{1,\ldots,d\}}}}-\mathbb1 \\
&=\frac{\sum_{s=1}^{\frac{\delta_{\{1,\ldots,d\}}}{\delta_I}}\mathbb L^{-\sum_{i=1}^d\left\lfloor\frac{s\delta_I}{\delta_i}\right\rfloor}T^{s\delta_I}}{\mathbb 1-\mathbb L^{-\sum_{i=1}^d\frac{\delta_{\{1,\ldots,d\}}}{\delta_i}}T^{\delta_{\{1,\ldots,d\}}}} \\
&=\frac{\sum_{s=1}^{\frac{\delta_{\{1,\ldots,d\}}}{\delta_I}}\mathbb L^{-\sum_{i=1}^d\left\lfloor\frac{s\delta_I}{\delta_i}\right\rfloor}T^{s\delta_I}}{\mathbb 1-\mathbb L^{-|w|}T^{m(w)}}
\end{align*}
The last equality derives from Remark \ref{rem:wvector}: $$\mathbb L^{-|w|}T^{m(w)}=\mathbb L^{-\sum_{i=1}^d\frac{\delta_{\{1,\ldots,d\}}}{\delta_i}}T^{\delta_{\{1,\ldots,d\}}}$$

We can do the same for the formula of Lemma \ref{lem:Bf} by taking the Euclidean division of $m$ by $\delta_{\{1,\ldots,d\}}$:
\begin{align*}
\sum_{m\ge1}\mathbb L^{-\sum_{i=1}^d\left\lfloor\frac{m}{\delta_i}\right\rfloor}T^m&=\sum_{q\ge0}\sum_{s=0}^{\delta_{\{1,\ldots,d\}}-1}\mathbb L^{-\sum_{i=1}^d\left(q\frac{\delta_{\{1,\ldots,d\}}}{\delta_i}+\left\lfloor\frac{s}{\delta_i}\right\rfloor\right)}T^{q\delta_{\{1,\ldots,d\}}+s}-\mathbb1 \\
&=\frac{\sum_{s=1}^{\delta_{\{1,\ldots,d\}}}\mathbb L^{-\sum_{i=1}^d\left\lfloor\frac{s}{\delta_i}\right\rfloor}T^{s}}{\mathbb 1-\mathbb L^{-|w|}T^{m(w)}}
\end{align*}

Hence, we obtain the following formula.
\begin{prop}
Let $f\in\mathbb R[x_1,\ldots,x_d]$ be a convenient non-degenerate weighted homogeneous polynomial. We denote by $\delta_i$ the exponent of the pure monomial $x_i$ of $f$. Then
\begin{align*}
\tilde Z_f(T)=\sum_{I\subset\{1,\ldots,d\}}\left(\left[f_{\tau_I}:(\mathbb R^*)^{|I|}\setminus f_{\tau_I}^{-1}(0)\rightarrow\mathbb R^*\right]-\left[\pr_2:f_{\tau_I}^{-1}(0)\cap(\mathbb R^*)^{|I|}\times\mathbb R^*\rightarrow\mathbb R^*\right]\right)\frac{\sum_{s=1}^{\frac{\delta_{\{1,\ldots,d\}}}{\delta_I}}\mathbb L^{-\sum_{i=1}^d\left\lfloor\frac{s\delta_I}{\delta_i}\right\rfloor}T^{s\delta_I}}{\mathbb 1-\mathbb L^{-|w|}T^{m(w)}} \\
\end{align*}
with $(\mathbb R^*)^{|\varnothing|}=\{0\}$, $f_{\tau_\varnothing}=0$ and $\delta_\emptyset=1$.
\end{prop}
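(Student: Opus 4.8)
The plan is to assemble the formula directly from the results established in this subsection. Recall from Corollary~\ref{cor:nondegZfmod} that $\tilde Z_f(T)=A_f(T)+B_f(T)-\frac{T}{\mathbb1-T}$. I would rewrite each of the two pieces $A_f(T)$ and $B_f(T)-\frac{T}{\mathbb1-T}$ as a rational fraction with the common denominator $\mathbb1-\mathbb L^{-|w|}T^{m(w)}$, recognizing the first as the sum over $\varnothing\neq I\subset\{1,\ldots,d\}$ of the stated summands and the second as the remaining summand, the one indexed by $I=\varnothing$. Since $m(w)\ge1$, the element $\mathbb1-\mathbb L^{-|w|}T^{m(w)}$ is a unit of $\widehat\M\llbracket T\rrbracket$, so all the geometric-series inversions below take place there.

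For $A_f(T)$ I would start from Corollary~\ref{cor:ComputeAf}. Fixing $\varnothing\neq I\subset\{1,\ldots,d\}$ and setting $N_I=\delta_{\{1,\ldots,d\}}/\delta_I$, I would use the Euclidean division $r=qN_I+s$ with $q\ge0$ and $0\le s\le N_I-1$; the key observation is that $N_I\delta_I=\delta_{\{1,\ldots,d\}}$ is a multiple of every $\delta_i$, so $\left\lfloor\frac{(qN_I+s)\delta_I}{\delta_i}\right\rfloor=q\frac{\delta_{\{1,\ldots,d\}}}{\delta_i}+\left\lfloor\frac{s\delta_I}{\delta_i}\right\rfloor$. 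This factors the inner series $\sum_{r\ge1}\mathbb L^{-\sum_i\lfloor r\delta_I/\delta_i\rfloor}T^{r\delta_I}$ as a geometric series in $q$ times a finite sum over $s$; summing the geometric series, shifting the $s$-range from $\{0,\ldots,N_I-1\}$ to $\{1,\ldots,N_I\}$ (the $s=0$ term cancels the $-\mathbb1$ correcting for the excluded $r=0$, and a new $s=N_I$ term is produced by the denominator), and invoking Remark~\ref{rem:wvector} to identify $\sum_i\delta_{\{1,\ldots,d\}}/\delta_i=|w|$ and $\delta_{\{1,\ldots,d\}}=m(w)$, one reaches
\[\sum_{r\ge1}\mathbb L^{-\sum_{i=1}^d\left\lfloor\frac{r\delta_I}{\delta_i}\right\rfloor}T^{r\delta_I}=\frac{\sum_{s=1}^{N_I}\mathbb L^{-\sum_{i=1}^d\left\lfloor\frac{s\delta_I}{\delta_i}\right\rfloor}T^{s\delta_I}}{\mathbb1-\mathbb L^{-|w|}T^{m(w)}},\]
which is the computation already displayed just before the statement. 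Substituting it into Corollary~\ref{cor:ComputeAf} writes $A_f(T)$ as the sum of the claimed summands over $\varnothing\neq I$.

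For $B_f(T)-\frac{T}{\mathbb1-T}$, Lemma~\ref{lem:Bf} gives $-\sum_{m\ge1}\mathbb L^{-\sum_i\lfloor m/\delta_i\rfloor}T^m$, and the same Euclidean-division trick (now dividing $m$ by $\delta_{\{1,\ldots,d\}}$) rewrites it as $-\frac{\sum_{s=1}^{\delta_{\{1,\ldots,d\}}}\mathbb L^{-\sum_i\lfloor s/\delta_i\rfloor}T^{s}}{\mathbb1-\mathbb L^{-|w|}T^{m(w)}}$. It remains to check that this is the $I=\varnothing$ summand of the asserted formula. With the convention $\delta_\varnothing=1$ the numerator is literally the one appearing there (since $\delta_{\{1,\ldots,d\}}/\delta_\varnothing=\delta_{\{1,\ldots,d\}}$ and $s\delta_\varnothing=s$), and with the conventions $(\mathbb R^*)^{|\varnothing|}=\{0\}$ and $f_{\tau_\varnothing}=0$ the bracketed coefficient equals $\left[f_{\tau_\varnothing}:\{0\}\setminus f_{\tau_\varnothing}^{-1}(0)\rightarrow\mathbb R^*\right]-\left[\pr_2:f_{\tau_\varnothing}^{-1}(0)\cap(\{0\}\times\mathbb R^*)\rightarrow\mathbb R^*\right]=[\varnothing]-\left[\pr_2:\{0\}\times\mathbb R^*\rightarrow\mathbb R^*\right]=0-\mathbb1=-\mathbb1$, because $f_{\tau_\varnothing}$ vanishes identically. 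Adding this summand to the $\varnothing\neq I$ part of $A_f(T)$ yields the sum over all $I\subset\{1,\ldots,d\}$, which is the desired formula. There is no genuine obstacle here---the proof is a bookkeeping assembly of the formulas established earlier in the subsection---the only points that need care being this $I=\varnothing$ identification (matching both the numerator and the coefficient $-\mathbb1$) and the observation that the rational-function manipulations live in $\widehat\M\llbracket T\rrbracket$ rather than in $\M\llbracket T\rrbracket$.
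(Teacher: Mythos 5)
Your proposal is correct and follows essentially the same route as the paper: the paper's proof consists precisely of the two Euclidean-division computations displayed just before the statement (rewriting the series from Corollary \ref{cor:ComputeAf} and Lemma \ref{lem:Bf} over the common denominator $\mathbb 1-\mathbb L^{-|w|}T^{m(w)}$) followed by the observation that the $I=\varnothing$ conventions absorb the $B_f(T)-\frac{T}{\mathbb 1-T}$ term. Your explicit check that the $I=\varnothing$ coefficient equals $-\mathbb 1$ and that the index shift from $s=0$ to $s=N_I$ works out is exactly the bookkeeping the paper leaves implicit.
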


\subsection{Proof of the invariance of the weigths}
\begin{lemma}\label{lem:exp2}
Let $f\in\mathbb R[x_1,\ldots,x_d]$ be a convenient non-degenerate weighted homogeneous polynomial. Denote by $a_n$ the coefficients of $\tilde Z_f(T)$, so that $$\tilde Z_f(T)=\sum_{n\ge1}a_nT^n$$ Denote by $\delta_i$ the exponent of the pure monomial $x_i$ of $f$. Assume that $\forall i,\delta_i\ge2$. Let $m$ be such that $2|m$ and $\delta_i\nmid m$ for $\delta_i\neq2$. Then $$a_m=\alpha_2\mathbb L^{-\sum_{i=1}^d\left\lfloor\frac{m}{\delta_i}\right\rfloor}$$ with $\beta F^+(\alpha_2)\neq0$.
\end{lemma}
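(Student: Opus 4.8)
The plan is to identify $\alpha_2$ explicitly as a class in $K_0(\AS_\mon)$, and then, rather than computing $\beta F^+(\alpha_2)$ directly, to compute its specialization $\chi_c F^+(\alpha_2)$ at $u=-1$: this suffices, because a nonzero value there forces $\beta F^+(\alpha_2)$ to be a nonzero polynomial, and $\chi_c$ (the Euler characteristic with compact support) is far more tractable, being additive on semialgebraic sets, invariant under homeomorphisms, and multiplicative along semialgebraic fibrations. I would begin by reading the coefficient of $T^m$ off $\tilde Z_f(T)=A_f(T)+B_f(T)-\frac{T}{\mathbb1-T}$: by Lemma \ref{lem:Bf} the part $B_f-\frac{T}{\mathbb1-T}$ contributes $-\mathbb L^{-N}$ in degree $m$, with $N:=\sum_{i=1}^d\lfloor m/\delta_i\rfloor$; and by Corollary \ref{cor:ComputeAf} the face $\tau_I$ ($\varnothing\ne I$) contributes in degree $m$ only when $\delta_I=\lcm\{\delta_i, i\in I\}$ divides $m$, which — since $2\mid m$, all $\delta_i\ge2$, and $\delta_i\nmid m$ whenever $\delta_i\ne2$ — forces $I\subseteq J:=\{i:\delta_i=2\}$, and then $\delta_I=2$, $r=m/2$, $\sum_i\lfloor r\delta_I/\delta_i\rfloor=N$. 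Hence $a_m=\alpha_2\mathbb L^{-N}$ with
$$\alpha_2=-\mathbb1+\sum_{\varnothing\ne I\subseteq J}\left(\left[f_{\tau_I}:(\mathbb R^*)^{|I|}\setminus f_{\tau_I}^{-1}(0)\to\mathbb R^*\right]-\left[\pr_2:f_{\tau_I}^{-1}(0)\cap(\mathbb R^*)^{|I|}\times\mathbb R^*\to\mathbb R^*\right]\right),$$
where for $I\subseteq J$ the polynomial $f_{\tau_I}$ is a (nonzero, by convenience) quadratic form in the variables $(x_i)_{i\in I}$, since by Remark \ref{rem:wvector} every monomial of $f$ involving only the variables $(x_i)_{i\in J}$ has total degree $2$. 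If $J=\varnothing$ then $\alpha_2=-\mathbb1$, already by Corollary \ref{cor:nonmult}, and $\chi_c F^+(\alpha_2)=-1\ne0$; so assume $J\ne\varnothing$ from now on.

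Next I would apply $\chi_c F^+$. As $F^+$ sends a symbol to the fibre of its structural map over $1$, writing $Q_I=\{x\in\mathbb R^{|I|}:f_{\tau_I}(x)=1\}$ and $Q_I^0=\{x\in\mathbb R^{|I|}:f_{\tau_I}(x)=0\}$, the value of $\chi_c F^+$ on the two bracketed classes is $\chi_c(Q_I\cap(\mathbb R^*)^{|I|})$ and $\chi_c(Q_I^0\cap(\mathbb R^*)^{|I|})$, while $\chi_c F^+(\mathbb1)=1$. I would then compute each torus part by additivity of $\chi_c$ and inclusion–exclusion over the coordinate hyperplanes, using that restricting $f_{\tau_I}$ to $\{x_j=0,\ j\in S\}$ yields $f_{\tau_{I\setminus S}}$, so that the relevant slices are linearly — hence homeomorphically — identified with $Q_{I\setminus S}$, resp.\ $Q_{I\setminus S}^0$, with the conventions $Q_\varnothing=\varnothing$ and $Q_\varnothing^0=\{\pt\}$. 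Summing over $\varnothing\ne I\subseteq J$ and telescoping — via $\sum_{I'\subseteq I\subseteq J}(-1)^{|I|-|I'|}$, which vanishes unless $I'=J$, and $\sum_{\varnothing\ne I\subseteq J}(-1)^{|I|}=-1$, the latter cancelling the $-\mathbb1$ — everything collapses to
$$\chi_c F^+(\alpha_2)=\chi_c(Q_J)-\chi_c(Q_J^0),$$
so the problem reduces to the single quadratic form $q:=f_{\tau_J}$ in $n:=|J|$ variables.

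Finally I would compute this difference. The non-degeneracy of $f$ — which passes to each face restriction $f_{\tau_{I'}}$ — forces $q$ to have rank $n$: a nonzero vector in the kernel of its matrix, restricted to its support $I'$, would contradict the non-degeneracy of $f_{\tau_{I'}}$. Since $\chi_c$ is a homeomorphism invariant, I may replace $q$ by a diagonal form $\sum_{k\le p}y_k^2-\sum_{k>p}y_k^2$ of some signature $(p,n-p)$. Then $\chi_c(Q_J)$ and $\chi_c(Q_J^0)$ follow from the multiplicativity of $\chi_c$ along the evident semialgebraic fibrations (namely $Q_J\to\mathbb R^{n-p}$ with fibre $S^{p-1}$ when $1\le p\le n-1$, and $Q_J^0\setminus\{0\}\to\mathbb R_{>0}$ with fibre $S^{p-1}\times S^{n-p-1}$), together with $\chi_c(S^r)=1+(-1)^r$ and $\chi_c(\mathbb R^r)=(-1)^r$. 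A short case check on the signature then gives that $\chi_c F^+(\alpha_2)$ equals $(-1)^{n-1}$ if $p=n$, equals $-1$ if $p=0$, and, for $1\le p\le n-1$, equals $1$ or $-1$ according as $p$ is odd or even (in every case independently of $n-p$); it is always $\pm1$, hence nonzero, so $\beta F^+(\alpha_2)$ is a nonzero polynomial, which is the assertion.

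The step I expect to be the main obstacle is recognizing that the computation should be carried out after specializing at $u=-1$: the "obvious" trivializations of the quadrics $Q_J$ and $Q_J^0$ by a radial coordinate are \emph{not} $\AS$-isomorphisms — their graphs fail arc-symmetry, as one already sees for a hyperbola whose two branches become glued at infinity — so a direct computation of $\beta(Q_J)$ and $\beta(Q_J^0)$ is delicate, whereas at the level of $\chi_c$ the homeomorphism-invariance and fibration-multiplicativity make everything routine. The other points requiring a short but careful argument are the telescoping bookkeeping (in particular that the empty-index slice $Q_\varnothing^0$ is a point, precisely the term that cancels the $-\mathbb1$) and the reduction of the non-degeneracy of $f$ to the full-rankness of $f_{\tau_J}$.
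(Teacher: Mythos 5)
Your proposal is correct and follows the paper's proof almost step for step: the same identification of $\alpha_2$ from Lemma \ref{lem:Bf} and Corollary \ref{cor:ComputeAf} (with your $J$ equal to the paper's $I_2=\{i:\delta_i=2\}$), the same treatment of the case $J=\varnothing$, the same telescoping over the nonempty subsets of $J$ reducing everything to the two fibers of the single quadratic form $f_{\tau_J}$ in $\mathbb R^{|J|}$, and the same specialization at $u=-1$ to conclude $\beta F^+(\alpha_2)\neq0$ from $\chi_cF^+(\alpha_2)\neq0$. The only divergence is the terminal computation: where you diagonalize the form and evaluate $\chi_c(Q_J)-\chi_c(Q_J^0)=(-1)^{p-1}$ directly via the sphere-bundle fibrations, the paper reduces by a linear change of coordinates to a homogeneous Brieskorn polynomial of degree $2$ and then quotes the parity results of \cite[Proposition 2.1]{Fic06} (the $1$-fiber has odd Euler characteristic) and \cite[Corollary 2.5]{Fic06} (the $0$-fiber has even Euler characteristic). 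Your version is self-contained, gives the exact value rather than just its parity, and makes explicit the full-rank claim (deduced from non-degeneracy applied to the sub-faces $\tau_{I'}$) that the paper leaves implicit in the phrase ``we may assume that $f_{\tau_{I_2}}$ is a homogeneous Brieskorn polynomial of degree 2''; the paper's version is shorter at the cost of two external citations. Both arguments are sound.
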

\begin{proof}
Let $I_2=\{i,\delta_i=2\}\subset\{1,\ldots,d\}$ then, by Lemma \ref{lem:Bf} and Corollary \ref{cor:ComputeAf},
$$a_m=\alpha_2\mathbb L^{-\sum_{i=1}^d\left\lfloor\frac{m}{\delta_i}\right\rfloor}$$

where $$\alpha_2=\left(\sum_{\varnothing\neq I\subset I_2}\left(\left[f_{\tau_I}:(\mathbb R^*)^{|I|}\setminus f_{\tau_I}^{-1}(0)\rightarrow\mathbb R^*\right]-\left[\pr_2:f_{\tau_I}^{-1}(0)\cap(\mathbb R^*)^{|I|}\times\mathbb R^*\rightarrow\mathbb R^*\right]\right)\right)-\mathbb 1$$

We may assume that $I_2\neq\varnothing$ since, otherwise, $\beta F^+(\alpha_2)=0-1\neq0$.

Next,
\begin{align*}
\beta F^+(\alpha_2)&=\left(\sum_{\varnothing\neq I\subset I_2}\left(\beta\left(f_{\tau_{I}}^{-1}(1)\subset(\mathbb R^*)^{|I|}\right)-\beta\left(f_{\tau_{I}}^{-1}(0)\subset(\mathbb R^*)^{|I|}\right)\right)\right)-1\\
&=\sum_{\varnothing\neq I\subset I_2}\beta\left(f_{\tau_{I}}^{-1}(1)\subset(\mathbb R^*)^{|I|}\right)-\sum_{\varnothing\neq I\subset I_2}\beta\left(f_{\tau_{I}}^{-1}(0)\subset(\mathbb R^*)^{|I|}\right)-\beta(\{0\})\\
&=\beta\left(f_{\tau_{I_2}}^{-1}(1)\subset\mathbb R^{|I_2|}\right)-\beta\left(f_{\tau_{I_2}}^{-1}(0)\subset\mathbb R^{|I_2|}\right)
\end{align*}
For the last equality, we use the identification of $\mathbb R^{|I|}$ with $\left\{(x_1,\ldots,x_d)\in\mathbb R^d,\,\forall j\notin I,x_j=0\right\}$. Indeed, for $I\subset I_2$, we have $\mathbb R^{|I|}\subset\mathbb R^{|I_2|}$ and so $$\beta\left(f_{\tau_{I}}^{-1}(1)\subset(\mathbb R^*)^{|I|}\right)=\beta\left(x\in\mathbb R^{|I_2|},f_{\tau_{I_2}}(x)=1,\forall i\in I, x_i\neq0,\forall j\in I_2\setminus I, x_j=0\right)$$
hence we may rewrite the first sum using the additivity of $\beta$:
\begin{align*}
\sum_{\varnothing\neq I\subset I_2}\beta\left(f_{\tau_{I}}^{-1}(1)\subset(\mathbb R^*)^{|I|}\right)&=\sum_{\varnothing\neq I\subset I_2}\beta\left(x\in\mathbb R^{|I_2|},f_{\tau_{I_2}}(x)=1,\forall i\in I, x_i\neq0,\forall j\in I_2\setminus I, x_j=0\right) \\
&=\beta\left(\bigsqcup_{\varnothing\neq I\subset I_2}\left\{x\in\mathbb R^{|I_2|},f_{\tau_{I_2}}(x)=1,\forall i\in I, x_i\neq0,\forall j\in I_2\setminus I, x_j=0\right\}\right) \\
&=\beta\left(x\in\mathbb R^{|I_2|},f_{\tau_{I_2}}(x)=1\right)
\end{align*}
We use the exact same argument for the second sum with the additional term $\beta(\{0\})$ since $0\in f_{\tau_{I_2}}^{-1}(0)\subset\mathbb R^{|I_2|}$. \\

By a linear change of coordinates, we may assume that $f_{\tau_{I_2}}$ is a homogeneous Brieskorn polynomial of degree 2. When we take the value at $u=-1$, which is just the Euler characteristic with compact support, the first term is odd by \cite[Proposition 2.1]{Fic06} or \cite[Proposition 2.1]{Fic06-rims} whereas the second term is even by \cite[Corollary 2.5]{Fic06} or \cite[Proposition 2.1]{Fic06-rims}. Hence $\beta F^+(\alpha_2)(u=-1)\neq0$ and so $\beta F^+(\alpha_2)\neq0$.
\end{proof}

\begin{rem}
The proof of Lemma \ref{lem:exp4} works as it is to prove Lemma \ref{lem:exp2}. Nevertheless, the above proof of Lemma \ref{lem:exp2} is easier for that particular situation.
\end{rem}

\begin{lemma}\label{lem:exp4}
Let $f\in\mathbb R[x_1,\ldots,x_d]$ be a convenient non-degenerate weighted homogeneous polynomial. Denote by $a_n$ the coefficients of $\tilde Z_f(T)$, so that $$\tilde Z_f(T)=\sum_{n\ge1}a_nT^n$$ Denote by $\delta_i$ the exponent of the pure monomial $x_i$ of $f$. Assume that $\forall i,\delta_i\ge2$. Let $m$ be such that $4|m$ and $\delta_i\nmid m$ for $\delta_i\neq2,4$. Then $$a_m=\alpha_4\mathbb L^{-\sum_{i=1}^d\left\lfloor\frac{m}{\delta_i}\right\rfloor}$$ with $\beta F^+(\alpha_4)\neq0$.
\end{lemma}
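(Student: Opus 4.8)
The plan is to mimic the proof of Lemma \ref{lem:exp2}, but now isolating the contribution coming from the variables whose pure exponent is $4$. First I would set $I_4=\{i,\ \delta_i\in\{2,4\}\}$ and, using Lemma \ref{lem:Bf} and Corollary \ref{cor:ComputeAf}, extract the coefficient $a_m$. Under the hypotheses $4\mid m$ and $\delta_i\nmid m$ for $\delta_i\neq2,4$, the only compact faces $\tau_I$ contributing to degree $m$ are those with $\varnothing\neq I\subset I_4$ (these are the indices $i$ for which $\delta_i\mid m$, since $2\mid m$ and $4\mid m$), together with the $B_f$-term contributing $-\mathbb 1$; moreover for $I\subset I_4$ one has $r\delta_I\mid m$ with $r=m/\delta_I$, and the power of $\mathbb L$ that comes out is exactly $\mathbb L^{-\sum_i\lfloor m/\delta_i\rfloor}$ in each case, by the same floor-function bookkeeping as in Lemma \ref{lem:exp2}. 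Hence $a_m=\alpha_4\,\mathbb L^{-\sum_{i=1}^d\lfloor m/\delta_i\rfloor}$ with
$$\alpha_4=\left(\sum_{\varnothing\neq I\subset I_4}\left(\left[f_{\tau_I}:(\mathbb R^*)^{|I|}\setminus f_{\tau_I}^{-1}(0)\rightarrow\mathbb R^*\right]-\left[\pr_2:f_{\tau_I}^{-1}(0)\cap(\mathbb R^*)^{|I|}\times\mathbb R^*\rightarrow\mathbb R^*\right]\right)\right)-\mathbb 1.$$

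Next I would apply $\beta F^+$. As in Lemma \ref{lem:exp2}, the telescoping over $\varnothing\neq I\subset I_4$ (using additivity of $\beta$ and the stratification of $(\mathbb R^*)^{|I|}\subset\mathbb R^{|I_4|}$ by which coordinates vanish) collapses the sum, and one must also account for $\beta(\{0\})$ coming from the point $0$ lying in the zero locus; the upshot is
$$\beta F^+(\alpha_4)=\beta\left(f_{\tau_{I_4}}^{-1}(1)\subset\mathbb R^{|I_4|}\right)-\beta\left(f_{\tau_{I_4}}^{-1}(0)\subset\mathbb R^{|I_4|}\right).$$
If $I_4=\varnothing$ then $\beta F^+(\alpha_4)=-1\neq0$ and we are done, so assume $I_4\neq\varnothing$. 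By a linear change of coordinates in each group of variables of equal weight, $f_{\tau_{I_4}}$ becomes a weighted homogeneous Brieskorn-type polynomial whose pure monomials have exponents in $\{2,4\}$; after evaluating at $u=-1$ we are reduced to a parity statement about the Euler characteristic with compact support of $\{f_{\tau_{I_4}}=1\}$ and $\{f_{\tau_{I_4}}=0\}$ in $\mathbb R^{|I_4|}$.

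The last step, and the real obstacle, is proving this parity/non-vanishing statement. In Lemma \ref{lem:exp2} the face polynomial was a rank-2 quadratic form, and one could quote the computations of Fichou \cite{Fic06,Fic06-rims} directly. Here $f_{\tau_{I_4}}$ is a sum of squares in the weight-$2$ variables plus a sum of fourth powers (up to sign) in the weight-$4$ variables, say $f_{\tau_{I_4}}=\pm x_1^2\pm\cdots\pm x_p^2\pm x_{p+1}^4\pm\cdots\pm x_{p+q}^4$ after coordinate changes. I would compute $\chi_c\{f_{\tau_{I_4}}=1\}$ and $\chi_c\{f_{\tau_{I_4}}=0\}$ by fibering over the fourth-power variables (or by the same convolution/Thom--Sebastiani bookkeeping used for Brieskorn polynomials in \cite[\S8]{JBC2}): each variable $x_i^2$ contributes a factor that is a union of at most two rays, each $x_j^4$ likewise, and then one checks that $\chi_c\{f_{\tau_{I_4}}=1\}$ is odd while $\chi_c\{f_{\tau_{I_4}}=0\}$ is even, exactly as in the degree-$2$ case, because the presence of at least one monomial of exponent in $\{2,4\}$ forces the relevant Euler characteristics to have the stated parities. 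This guarantees $\beta F^+(\alpha_4)(u=-1)\neq0$, hence $\beta F^+(\alpha_4)\neq0$, completing the proof.
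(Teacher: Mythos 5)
Your setup is correct and matches the paper: the identification of the contributing faces $\varnothing\neq I\subset I_4$, the formula for $\alpha_4$, and the telescoping that collapses $\beta F^+(\alpha_4)$ (or $\chi_cF^+(\alpha_4)$) to $\beta\left(f_{\tau_{I_4}}^{-1}(1)\right)-\beta\left(f_{\tau_{I_4}}^{-1}(0)\right)$ are all as in the paper's proof. The gap is in the reduction step. You claim that ``by a linear change of coordinates in each group of variables of equal weight'' one may assume $f_{\tau_{I_4}}=\pm x_1^2\pm\cdots\pm x_p^2\pm x_{p+1}^4\pm\cdots\pm x_{p+q}^4$. This is false in general: the compact face $\tau_{I_4}$ is the convex hull of the points $2e_i$ ($\delta_i=2$) and $4e_j$ ($\delta_j=4$), and $f_{\tau_{I_4}}$ may contain mixed monomials lying on that face, such as $x_ix_j^2$ (with $\delta_i=2$, $\delta_j=4$) or $x_j^2x_{j'}^2$ (with $\delta_j=\delta_{j'}=4$). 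No linear change of coordinates within weight groups removes these. This is precisely why the quadratic argument of Lemma \ref{lem:exp2} (where $f_{\tau_{I_2}}$ is an honest quadratic form and can be diagonalized linearly) does not carry over directly, and why the paper flags Lemma \ref{lem:exp4} as requiring a different proof. Your fallback of ``convolution/Thom--Sebastiani bookkeeping'' also fails for the same reason: convolution requires separated variables, which is exactly what is lost here (the introduction states this explicitly).

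The paper closes this gap by a genuinely different mechanism: it uses the congruence $\chi_c\left(f_{\tau_{I_4}}^{-1}(x)\right)\equiv\chi\left(f_{\tau_{I_4}}^{\mathbb C\,-1}(x)\right)\bmod 2$ to pass to the complexification, then invokes Milnor's lemma (the complex fiber at $1$ is the Milnor fiber), Kouchnirenko's formula for the Euler characteristic of the Milnor fiber, and Khovanskii's formula for $\chi_c$ of the zero locus, all of which depend only on the Newton polyhedron of the (convenient, non-degenerate) polynomial $f_{\tau_{I_4}}^{\mathbb C}$. Only after this Newton-polyhedron-invariance argument is one entitled to replace $f_{\tau_{I_4}}$ by the diagonal model $\sum_i\varepsilon_ix_i^2+\sum_j\eta_jx_j^4$ and quote the parity computation from \cite[Proposition 8.3]{JBC2}. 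Without some substitute for this step, your proof does not establish the parity claim for a general face polynomial $f_{\tau_{I_4}}$.
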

\begin{proof}
Let $I_4=\{i,\delta_i|4\}\subset\{1,\ldots,d\}$ then, by Lemma \ref{lem:Bf} and Corollary \ref{cor:ComputeAf},
$$a_m=\alpha_4\mathbb L^{-\sum_{i=1}^d\left\lfloor\frac{m}{\delta_i}\right\rfloor}$$

where $$\alpha_4=\left(\sum_{\varnothing\neq I\subset I_4}\left(\left[f_{\tau_I}:(\mathbb R^*)^{|I|}\setminus f_{\tau_I}^{-1}(0)\rightarrow\mathbb R^*\right]-\left[\pr_2:f_{\tau_I}^{-1}(0)\cap(\mathbb R^*)^{|I|}\times\mathbb R^*\rightarrow\mathbb R^*\right]\right)\right)-\mathbb 1$$

We may assume that $I_4\neq\varnothing$ since, otherwise, $\beta F^+(\alpha_4)=0-1\neq0$.

Next
\begin{align*}
\chi_c F^+(\alpha_4)&=\left(\sum_{\varnothing\neq I\subset I_4}\left(\chi_c\left(f_{\tau_{I}}^{-1}(1)\subset(\mathbb R^*)^{|I|}\right)-\chi_c\left(f_{\tau_{I}}^{-1}(0)\subset(\mathbb R^*)^{|I|}\right)\right)\right)-1\\
&=\sum_{\varnothing\neq I\subset I_4}\chi_c\left(f_{\tau_{I}}^{-1}(1)\subset(\mathbb R^*)^{|I|}\right)-\sum_{\varnothing\neq I\subset I_4}\chi_c\left(f_{\tau_{I}}^{-1}(0)\subset(\mathbb R^*)^{|I|}\right)-\chi_c(\{0\})\\
&=\chi_c\left(f_{\tau_{I_4}}^{-1}(1)\subset\mathbb R^{|I_4|}\right)-\chi_c\left(f_{\tau_{I_4}}^{-1}(0)\subset\mathbb R^{|I_4|}\right)
\end{align*}
For the last equality, we use the identification of $\mathbb R^{|I|}$ with $\left\{(x_1,\ldots,x_d)\in\mathbb R^d,\,\forall j\notin I,x_j=0\right\}$. Indeed, for $I\subset I_2$, we have $\mathbb R^{|I|}\subset\mathbb R^{|I_2|}$ and so $$\chi_c\left(f_{\tau_{I}}^{-1}(1)\subset(\mathbb R^*)^{|I|}\right)=\chi_c\left(x\in\mathbb R^{|I_4|},f_{\tau_{I_4}}(x)=1,\forall i\in I, x_i\neq0,\forall j\in I_4\setminus I, x_j=0\right)$$
hence we may rewrite the first sum using the additivity of $\chi_c$ as in Lemma \ref{lem:exp2}. \\
We use the exact same argument for the second sum with the additional term $\chi_c(\{0\})$ since $0\in f_{\tau_{I_4}}^{-1}(0)\subset\mathbb R^{|I_4|}$. \\

Denote by $f_{\tau_{I_4}}^\mathbb C$ the complexification of $f_{\tau_{I_4}}$. Since $$\chi_c\left(f_{\tau_{I_4}}^{-1}(x)\right)\equiv\chi\left(f_{\tau_{I_4}}^{\mathbb C-1}(x)\right)\mod 2$$ we have $$\chi_c\left(f_{\tau_{I_4}}^{-1}(1)\subset\mathbb R^{|I_4|}\right)-\chi_c\left(f_{\tau_{I_4}}^{-1}(0)\subset\mathbb R^{|I_4|}\right)\equiv\chi\left(f_{\tau_{I_4}}^{\mathbb C-1}(1)\subset\mathbb C^{|I_4|}\right)-\chi\left(f_{\tau_{I_4}}^{\mathbb C-1}(0)\subset\mathbb C^{|I_4|}\right)\mod2$$

Since $f_{\tau_{I_4}}^\mathbb C$ is weighted homogeneous, $f_{\tau_{I_4}}^{\mathbb C-1}(1)$ is homeomorphic to the Milnor fiber of $f_{\tau_{I_4}}^{\mathbb C}$ at $0$ \cite[Lemma 9.4]{Mil68} and since $f_{\tau_{I_4}}^\mathbb C$ is convenient and non-degenerate, the Euler characteristic of its Milnor fiber can be computed from its Newton polyhedron (See \cite[p. 64]{Mil68} and \cite[1.10 Théorème I.(ii).]{Kou76}).

Since $f_{\tau_{I_4}}^\mathbb C$ is non-degenerate, $\chi_c\left(f_{\tau_{I_4}}^{\mathbb C-1}(0)\subset\mathbb C^{|I_4|}\right)$ can be computed from its Newton polyhedron formula \cite[Theorem 2]{Hov78}.

Hence $$\chi_c\left(f_{\tau_{I_4}}^{-1}(1)\subset\mathbb R^{|I_4|}\right)-\chi_c\left(f_{\tau_{I_4}}^{-1}(0)\subset\mathbb R^{|I_4|}\right)\mod2$$ only depends on the Newton polyhedron of $f_{\tau_{I_4}}$.

So we may assume that $$f_{\tau_{I_4}}(x)=\sum_i\varepsilon_ix_i^2+\sum_j\eta_jx_i^4$$ with $\varepsilon_i,\eta_j\in\{\pm1\}$. But then (see the proof of \cite[Proposition 8.3]{JBC2}) $$\chi_c\left(f_{\tau_{I_4}}^{-1}(1)\subset\mathbb R^{|I_4|}\right)-\chi_c\left(f_{\tau_{I_4}}^{-1}(0)\subset\mathbb R^{|I_4|}\right)\equiv1\mod2$$

Thereby $\beta F^+(\alpha_4)(u=-1)=\chi_c F^+(\alpha_4)\neq0$ and thus $\beta F^+(\alpha_4)\neq0$.
\end{proof}

The previous results allow one to prove Theorem \ref{thm:Main} using the proof of \cite[Proposition 8.3]{JBC2}. We recall it for ease of reading.

The proof is structured as follows. We first handle the non-singular case so that we can next focus on the singular one. In the latter, the idea is to prove that we can recover the weights of a singular convenient non-degenerate weighted homogeneous polynomial $f$ from its modified zeta function. Since it is an invariant of the arc-analytic equivalence, this induces that the arc-analytic type of such a polynomial determines its weights.

More precisely we are going to deduce the exponents $\delta_i$ of the pure monomials of $f$ from its modified zeta function, which is enough by Remark \ref{rem:wvector}. The first step is to find an upper bound $K$ of these exponents in order to reduce to a finite number of possibilities. Finally, we construct (and solve) a linear system in the unknown variables $\mult(q)=\#\{i,\,\delta_i=q\}$ for $q\le K$, which is enough to recover the exponents.

\begin{proof}[Proof of Theorem \ref{thm:Main}]
We first fix some notations. \\
Let $f\in\mathbb R[x_1,\ldots,x_d]$ be a convenient non-degenerate weighted homogeneous polynomial. Denote by $a_n$ the coefficients of $\tilde Z_f(T)$, so that $$\tilde Z_f(T)=\sum_{n\ge1}a_nT^n$$ Denote by $\delta_i$ the exponent of the pure monomial $x_i$ of $f$. \\

Notice that $f$ is singular if and only if $\forall i,\delta_i\ge2$. Indeed, if, for example, $\delta_1=1$ then $f(x_1,\ldots,x_d)=x_1+h(x_2,\ldots,x_d)$ since the monomials of $f$ lie in the convex hull of $x_i^{\delta_i}$ and thus $f$ is non-singular. Otherwise, if $\forall i,\delta_i\ge2$ then we may apply the Euler formula to the partial derivatives of $f$ to show that $f$ is singular at the origin.

If $f$ is non-singular, then, by a change of coordinates, we may assume $\delta_1=1$ and $f$ is arc-analytically equivalent to $(x_1,\ldots,x_d)\mapsto x_1$ by applying the Nash inverse function theorem to $(x_1,\ldots,x_d)\mapsto\left(f(x_1,\ldots,x_d),x_2,\ldots,x_d\right)$. Notice that in this case $\tilde Z_f(T)=0$. Actually, by Corollary \ref{cor:nonmult}, $f$ is non-singular if and only if $\tilde Z_f(T)=0$.

Now let $f$ and $g$ be two convenient non-degenerate weighted homogeneous polynomials which are arc-analytically equivalent. If $f$ is non-singular then $\tilde Z_f(T)=\tilde Z_g(T)=0$ and $g$ is also non-singular. Moreover, they are both arc-analytically equivalent to $(x_1,\ldots,x_d)\mapsto x_1$. \\

Hence, from now on, we may assume that $\forall i\in\{1,\ldots,d\},\,\delta_i\ge2$. \\

Then, by Corollary \ref{cor:nonmult}, $$\lim_{p}\frac{1-\deg\beta\left(\overline{a_p}\right)}{p}=\lim_{p}\frac{\sum_{i=1}^d\left\lfloor\frac{p}{\delta_i}\right\rfloor}{p}=\sum_{i=1}^d\frac{1}{\delta_i}$$ where $p$ goes through the prime numbers.

Hence, we may deduce from \cite[Lemma 8.5]{JBC2} an upper bound $K$ of the set $\{\delta_1,\ldots,\delta_d\}$. \\

For $q\in\mathbb N$, set $\mult(q)=\#\{i,\,\delta_i=q\}$. The goal is to prove that we can retrieve $\mult(q)$ for $q\le K$ from $\tilde Z_f(T)$. This will prove, using Remark \ref{rem:wvector}, that we may recover the weights of $f$ from $\tilde Z_f(T)$. Which is enough to achieve the proof of Theorem \ref{thm:Main} since $\tilde Z_f(T)$ is an invariant of the arc-analytic equivalence. \\

First, to lighten the redaction, we enlarge the set $\{1,\ldots,K\}$ as follows. Set $$\mathcal P=\{p\text{ prime},\,p\le K\}$$ and for $p\in\mathcal P$, set $$\gamma_p=\max\left\{\gamma,\,p^\gamma\le K\right\}$$ Set $$Q=\left\{\prod_{p\in\mathcal P}p^{\alpha_p},\,0\le\alpha_p\le\gamma_p\right\}$$ Then $\{\delta_1,\ldots,\delta_d\}\subset\{1,\ldots,K\}\subset Q$ and we are going to compute $\mult(q)$ for $q\in Q$ from the coefficients of $\tilde Z_f(T)$. \\

Notice that $\mult1=0$. \\

The proof is now divided into different steps.

\begin{enumerate}[label=\underline{Step \arabic*:},ref=Step \arabic*]
\item\label{item:six} Equations involving $\mult(q)$ for $q\in Q$ such that $6|q$. \\
Let $q=\prod_{p\in\mathcal P}p^{\alpha_p}$ with $\alpha_2,\alpha_3\ge1$ and $0\le\alpha_p\le\gamma_p$ for $p\in\mathcal P\setminus\{2,3\}$. By the Chinese Remainder Theorem, we may find $n\in\mathbb N\setminus\{0,1\}$ such that:
$$\left\{\begin{array}{l}
\forall r\in Q,\,r|n-1\Leftrightarrow r=1\\
\forall r\in Q,\,r|n\Leftrightarrow r|q\\
\forall r\in Q,\,r|n+1\Leftrightarrow r=1
\end{array}\right.$$

Indeed, it suffices to choose $n$ such that $$\left\{\begin{array}{ll}n\equiv p^{\alpha_p}\mod p^{\alpha_p+1}&\text{ if $\alpha_p\ge1$}\\n\equiv2\mod p&\text{ if $\alpha_p=0$}\end{array}\right.$$

Hence, by Corollary \ref{cor:nonmult}, $$-\deg\beta(\overline{a_{n+1}})+\deg\beta(\overline{a_{n-1}})=\sum_{\substack{r\in Q\\r|q}}\mult(r)$$

\item\label{item:mult2} Computation of $\mult 2$. \\
Similarly, let $n$ be such that \\
$\left\{\begin{array}{ll}
n\equiv2^2\mod2^3&\\
n\equiv3\mod3^2&\\
n\equiv5\mod5^2&\\
n\equiv3\mod p\text{ for $p\in\mathcal P\setminus\{2,3,5\}$}
\end{array}\right.$
then\quad\quad
$\left\{\begin{array}{l}
\forall r\in Q,\,r|n-2\Leftrightarrow r|2\\
\forall r\in Q,\,r|n-1\Leftrightarrow r=1\\
\forall r\in Q,\,r|n\Leftrightarrow r|2^2\cdot3\cdot5\\
\forall r\in Q,\,r|n+1\Leftrightarrow r=1\\
\forall r\in Q,\,r|n+2\Leftrightarrow r|2
\end{array}\right.$

Hence, by Lemma \ref{lem:exp2}, $$-\deg\beta F^+(a_{n+2})+\deg\beta F^+(a_{n-2})=\mult2+\sum_{\substack{r\in Q\\r|n}}\mult(r)$$

We may derive $\mult 2$ from the last since, in \ref{item:six}, we got an equation of the form $$\sum_{\substack{r\in Q\\r|n}}\mult(r)=\cst$$ where the right-hand side ``$\cst$'' can be computed in terms of the $\beta (\overline a_i)$'s.

\item\label{item:no23} Computation of $\mult q$ for $q\in Q$ such that $2\nmid q,3\nmid q$. \\
Let $q=\prod_{p\in\mathcal P}p^{\alpha_p}$ with $\alpha_2=\alpha_3=0$ and $0\le\alpha_p\le\gamma_p$ for $p\in\mathcal P\setminus\{2,3\}$. \\
Similarly, let $n$ be such that \\
$\left\{\begin{array}{ll}
n\equiv p^{\alpha_p}\mod p^{\alpha_p+1} & \text{ if $\alpha_p\ge1$} \\
n-1\equiv0\mod2^3&\\
n-1\equiv0\mod p&\text{ if $p\neq2$ and $\alpha_p=0$}
\end{array}\right.$
then\quad\quad
$\left\{\begin{array}{l}
\forall r\in Q,\,r|n-3\Leftrightarrow r|2\\
\forall r\in Q,\,r|n-2\Leftrightarrow r=1\\
2\cdot3|n-1\\
\forall r\in Q,\,r|n\Leftrightarrow r|q\\
\forall r\in Q,\,r|n+1\Leftrightarrow r|2
\end{array}\right.$

Hence, by Lemma \ref{lem:exp2}, $$-\deg\beta F^+(a_{n+1})+\deg\beta F^+(a_{n-3})=\mult2+\sum_{\substack{r\in Q\\r|n-1}}\mult(r)+\sum_{\substack{r\in Q\\r|q}}\mult(r)$$

Since we already computed $\mult2$ in \ref{item:mult2} and since we already got, in \ref{item:six}, an equation of the form $$\sum_{\substack{r\in Q\\r|n-1}}\mult(r)=\cst$$ we derive an equation of the form $$\sum_{\substack{r\in Q\\r|q}}\mult(r)=\cst$$

Then we may compute recursively $\mult q$ for $q\in Q$ such that $2\nmid q,3\nmid q$ by varying the $\alpha_p$.

\item\label{item:3et4} Computation of $\mult3$ and $\mult4$. \\
Similarly, let $n$ be such that\\
$\left\{\begin{array}{ll}
n\equiv2^2\mod2^3&\\
n+1\equiv5\mod5^2&\\
n+2\equiv6\mod3^2&\\
n+2\equiv0\mod p&\text{ if $p\in\mathcal P\setminus\{2,3,5\}$}
\end{array}\right.$
then\quad\quad
$\left\{\begin{array}{l}
\forall r\in Q,\,r|n-3\Leftrightarrow r=1\\
\forall r\in Q,\,r|n-2\Leftrightarrow r|2\\
\forall r\in Q,\,r|n-1\Leftrightarrow r|3\\
\forall r\in Q,\,r|n\Leftrightarrow r|2^2\\
\forall r\in Q,\,r|n+1\Leftrightarrow r|5\\
2\cdot3|n+2\\
\forall r\in Q,\,r|n+3\Leftrightarrow r=1\\
\end{array}\right.$

Hence, by Corollary \ref{cor:nonmult}, $$-\deg\beta F^+(a_{n+3})+\deg\beta F^+(a_{n-3})=2\mult2+\mult3+\mult4+\mult5+\sum_{\substack{r\in Q\\r|n+2}}\mult(r)$$

Since we computed $\mult 2$ in \ref{item:mult2}, since we computed $\mult 5$ in \ref{item:no23} and since we get an equation of the form $\sum_{\substack{r\in Q\\r|n+2}}\mult(r)=\cst$ in \ref{item:six}, we get an equation of the form 
\begin{equation}\label{eqn:3et4:1}\mult3+\mult4=\cst\end{equation}

Now let $n$ be such that \\
$\left\{\begin{array}{ll}
n\equiv2^3\mod2^4&\\
n\equiv3^2\mod3^3&\\
n\equiv0\mod p&\text{ if $p\in\mathcal P\setminus\{2,3\}$}
\end{array}\right.$
then\quad
$\left\{\begin{array}{l}
\forall r\in Q,\,r|n-4\Leftrightarrow r|2^2\\
\forall r\in Q,\,r|n-3\Leftrightarrow r|3\\
\forall r\in Q,\,r|n-2\Leftrightarrow r|2\\
\forall r\in Q,\,r|n-1\Leftrightarrow r=1\\
2\cdot3|n\\
\forall r\in Q,\,r|n+1\Leftrightarrow r=1\\
\forall r\in Q,\,r|n+2\Leftrightarrow r|2\\
\forall r\in Q,\,r|n+3\Leftrightarrow r|3\\
\forall r\in Q,\,r|n+4\Leftrightarrow r|2^2\\
\end{array}\right.$

Hence, by Lemma \ref{lem:exp4}, $$-\deg\beta F^+(a_{n+4})+\deg\beta F^+(a_{n-4})=\sum_{\substack{r\in Q\\r|n}}\mult(r)+3\mult2+2\mult3+\mult4$$

Since we computed $\mult 2$ in \ref{item:mult2} and since we get an equation of the form $\sum_{\substack{r\in Q\\r|n}}\mult(r)=\cst$ in \ref{item:six}, we get an equation of the form 
\begin{equation}\label{eqn:3et4:2}2\mult3+\mult4=\cst\end{equation}

Equations \eqref{eqn:3et4:1} and \eqref{eqn:3et4:2} allow us to compute $\mult3$ and $\mult4$.

\item\label{item:pas4pas3} Computation of $\mult q$ for $q\in Q$ such that $2|q,3\nmid q,4\nmid q$. \\
Let $q=\prod_{p\in\mathcal P}p^{\alpha_p}$ with $\alpha_2=1$, $\alpha_3=0$ and $0\le\alpha_p\le\gamma_p$ for $p\in\mathcal P\setminus\{2,3\}$. Similarly, let $n$ be such that

$\left\{\begin{array}{ll}
n\equiv p^{\alpha_p}\mod p^{\alpha_p+1} & \text{ if $\alpha_p\ge1$ and $p\neq2$} \\
n+1\equiv5\mod5^2&\text{ if $\alpha_5=0$}\\
n+2\equiv2^3\mod2^4&\\
n+2\equiv6\mod3^2&\\
n+2\equiv0\mod7&\text{ if $\alpha_7=0$}\\
n\equiv4\mod p&\text{ for $p\in\mathcal P\setminus\{2,3,5,7\}$, $\alpha_p=0$}
\end{array}\right.$
then\quad
$\left\{\begin{array}{l}
\forall r\in Q,\,r|n-3\Leftrightarrow r=1\\
\forall r\in Q,\,r|n-2\Leftrightarrow r|2^2\\
\forall r\in Q,\,r|n-1\Leftrightarrow r|3\\
\forall r\in Q,\,r|n\Leftrightarrow r|q\\
\forall r\in Q,\,r|n+1\Leftrightarrow r=1 \text{ if $\alpha_5\ge1$}\\
\forall r\in Q,\,r|n+1\Leftrightarrow r|5 \text{ if $\alpha_5=0$}\\
2\cdot3|n+2\\
\forall r\in Q,\,r|n+3\Leftrightarrow r=1
\end{array}\right.$

Hence, by Corollary \ref{cor:nonmult}, $$-\deg\beta(\overline{a_{n+3}})+\deg\beta(\overline{a_{n-3}})=\sum_{\substack{r\in Q\\r|q}}\mult(r)+\sum_{\substack{r\in Q\\r|n+2}}\mult(r)+\mult2+\mult3+\mult4(+\mult5)$$

Since we already computed $\mult 2$ in \ref{item:mult2}, $\mult3,\mult4$ in \ref{item:3et4}, $\mult5$ in \ref{item:no23} and since we got in \ref{item:six} an equation of the form $$\sum_{\substack{r\in Q\\r|n+2}}\mult(r)=\cst$$ we finally obtain an equation of the form $$\sum_{\substack{r\in Q\\r|q}}\mult(r)=\cst$$

Thus we may compute $\mult q$ for $q\in Q$ such that $2|q,3\nmid q,4\nmid q$ by varying the $\alpha_p$.

\item\label{item:4pas3} Computation of $\mult q$ for $q\in Q$ such that $4|q,3\nmid q$. \\
Let $q=\prod_{p\in\mathcal P}p^{\alpha_p}$ with $\alpha_2\ge2$, $\alpha_3=0$ and $0\le\alpha_p\le\gamma_p$ for $p\in\mathcal P\setminus\{2,3\}$. Similarly, let $n$ be such that \\
$\left\{\begin{array}{ll}
n\equiv p^{\alpha_p}\mod p^{\alpha_p+1}&\text{ if $\alpha_p\ge1$} \\
n-1\equiv3\mod3^2&\\
n-2\equiv5\mod5^2&\text{ if $\alpha_5=0$}\\
n\equiv4\mod p&\text{ for $p\in\mathcal P\setminus\{2,3,5\}$, $\alpha_p=0$}
\end{array}\right.$
then\quad
$\left\{\begin{array}{l}
\forall r\in Q,\,r|n-3\Leftrightarrow r=1\\
\forall r\in Q,\,r|n-2\Leftrightarrow r|2\text{ if $\alpha_5\ge1$}\\
\forall r\in Q,\,r|n-2\Leftrightarrow r|2\cdot5\text{ if $\alpha_5=0$}\\
\forall r\in Q,\,r|n-1\Leftrightarrow r|3\\
\forall r\in Q,\,r|n\Leftrightarrow r|q\\
\forall r\in Q,\,r|n+1\Leftrightarrow r|1
\end{array}\right.$

Hence, by Corollary \ref{cor:nonmult}, $$-\deg\beta(\overline{a_{n+1}})+\deg\beta(\overline{a_{n-3}})=\sum_{\substack{r\in Q\\r|q}}\mult(r)+\sum_{\substack{r\in Q\\r|n-2}}\mult(r)+\mult3$$

Since we already computed $\mult3$ in \ref{item:3et4} and since we got an equation of the form $\sum_{\substack{r\in Q\\r|n-2}}\mult(r)$ in \ref{item:pas4pas3}, we obtain an equation of the form $$\sum_{\substack{r\in Q\\r|q}}\mult(r)=\cst$$

Thus we may compute $\mult q$ for $q\in Q$ such that $4|q,3\nmid q$ by varying the $\alpha_p$.

\item Computation of $\mult q$ for $q\in Q$ such that $3|q,2\nmid q$. \\
Let $q=\prod_{p\in\mathcal P}p^{\alpha_p}$ with $\alpha_3\ge1$, $\alpha_2=0$ and $0\le\alpha_p\le\gamma_p$ for $p\in\mathcal P\setminus\{2,3\}$. Similarly, let $n$ be such that \\
$\left\{\begin{array}{ll}
n-1\equiv2^2\mod2^3&\\
n\equiv p^{\alpha_p}\mod p^{\alpha_p+1}&\text{ if $\alpha_p\ge1$} \\
n+1\equiv5\mod5^2&\text{ if $\alpha_5=0$}\\
n\equiv3\mod p&\text{ for $p\in\mathcal P\setminus\{2,3,5\}$, $\alpha_p=0$}
\end{array}\right.$
then\quad
$\left\{\begin{array}{l}
\forall r\in Q,\,r|n-2\Leftrightarrow r|1\\
\forall r\in Q,\,r|n-1\Leftrightarrow r|2^2\\
\forall r\in Q,\,r|n\Leftrightarrow r|q\\
\forall r\in Q,\,r|n+1\Leftrightarrow r|2\text{ if $\alpha_5\ge1$}\\
\forall r\in Q,\,r|n+1\Leftrightarrow r|2\cdot5\text{ if $\alpha_5=0$}\\
\forall r\in Q,\,r|n+2\Leftrightarrow r|1
\end{array}\right.$

Hence, by Corollary \ref{cor:nonmult}, $$-\deg\beta(\overline{a_{n+2}})+\deg\beta(\overline{a_{n-2}})=\sum_{\substack{r\in Q\\r|q}}\mult(r)+\sum_{\substack{r\in Q\\r|n-1}}\mult(r)+\sum_{\substack{r\in Q\\r|n+1}}\mult(r)$$

Since we got an equation of the form $\sum_{\substack{r\in Q\\r|n-1}}\mult(r)$ in \ref{item:4pas3} and an equation of the form $\sum_{\substack{r\in Q\\r|n+1}}\mult(r)$ in \ref{item:pas4pas3}, we obtain an equation of the form $$\sum_{\substack{r\in Q\\r|q}}\mult(r)=\cst$$

Thus we may compute $\mult q$ for $q\in Q$ such that $3|q,2\nmid q$ by varying the $\alpha_p$.

\item Computation of $\mult(q)$ for $q\in Q$ such that $6|q$. \\
We may now compute  $\mult(q)$ for $q\in Q$ such that $6|q$ by varying the $\alpha_p$ in \ref{item:six}.
\end{enumerate}
\end{proof}

The arguments of the previous proof also allow one to prove \cite[Corollary 8.4]{JBC2} without using the convolution formula.
\begin{thm}[{\cite[Corollary 8.4]{JBC2}}]
The arc-analytic type of a singular Brieskorn polynomial determines its exponents.
\end{thm}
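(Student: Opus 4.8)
The plan is to observe that a singular Brieskorn polynomial $f = \sum_{i=1}^d \varepsilon_i x_i^{\delta_i}$ (with $\varepsilon_i \in \{\pm 1\}$) is in particular a convenient non-degenerate weighted homogeneous polynomial, so that all the machinery developed in Section~\ref{sect:proof} applies verbatim. The pure monomial of $x_i$ in $f$ is exactly $x_i^{\delta_i}$, so the exponents $\delta_i$ appearing in Corollaries \ref{cor:nonmult}, Lemmas \ref{lem:exp2} and \ref{lem:exp4} coincide with the Brieskorn exponents. Since $\tilde Z_f(T)$ is an invariant of the arc-analytic equivalence, it suffices to show that one can recover the multiset $\{\delta_1,\ldots,\delta_d\}$ from $\tilde Z_f(T)$.

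The first step is to note that $f$ is singular if and only if $\delta_i \ge 2$ for all $i$: if some $\delta_i = 1$ then $f$ is linear in that variable plus a polynomial in the others, hence nonsingular, and by Corollary \ref{cor:nonmult} this is equivalent to $\tilde Z_f(T) = 0$. So we may assume $\delta_i \ge 2$ for all $i$. The second step is to extract an upper bound $K$ for $\max_i \delta_i$: by Corollary \ref{cor:nonmult}, for $p$ prime the coefficient $a_p$ satisfies $\deg\beta(\overline{a_p}) = -\sum_i \lfloor p/\delta_i \rfloor$, so $\lim_p (1 - \deg\beta(\overline{a_p}))/p = \sum_i 1/\delta_i$ along primes, and \cite[Lemma 8.5]{JBC2} then yields such a bound $K$. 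The third step is to set $\mult(q) = \#\{i : \delta_i = q\}$ and run the exact same argument as in the proof of Theorem \ref{thm:Main}: introduce the enlarged set $Q$ of $K$-smooth numbers containing $\{1,\ldots,K\}$, and carry out the chain of steps (\ref{item:six}) through the final step, using the Chinese Remainder Theorem to build indices $n$ with prescribed divisibility patterns among $n-3,\ldots,n+3$ (or $n-4,\ldots,n+4$) inside $Q$, and using Corollary \ref{cor:nonmult}, Lemma \ref{lem:exp2}, Lemma \ref{lem:exp4} to convert degrees of $\beta(\overline{a_n})$ and $\beta F^+(a_n)$ into the linear combinations $\sum_{r \mid q} \mult(r) = \cst$. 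Solving this triangular system recovers every $\mult(q)$, hence the multiset of exponents.

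The only point that genuinely requires the Brieskorn structure—rather than just ``convenient non-degenerate weighted homogeneous''—is the parity computations inside Lemmas \ref{lem:exp2} and \ref{lem:exp4} (the nonvanishing of $\beta F^+(\alpha_2)$ and $\beta F^+(\alpha_4)$), but these lemmas are stated for arbitrary such polynomials and their proofs already reduce $f_{\tau_{I_2}}$, resp.\ $f_{\tau_{I_4}}$, to a Brieskorn polynomial of the relevant small degree by a linear change of coordinates and a Newton-polyhedron argument, so nothing new is needed. The main obstacle is purely bookkeeping: verifying that every intermediate ``$\cst$'' in the chain of steps has indeed been computed at an earlier stage, i.e.\ that the linear system one builds is genuinely solvable in the order (\ref{item:six}), $\mult 2$, then $\mult q$ for $(2\cdot 3,q)=1$, then $\mult 3,\mult 4$, then the remaining residue classes. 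But this is identical to the argument in the proof of Theorem \ref{thm:Main} above and in \cite[Proposition 8.3]{JBC2}, so the proof consists of observing that that argument applies unchanged.
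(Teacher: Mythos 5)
Your proposal is correct and is exactly the paper's intended argument: the paper proves this theorem with the single remark that a singular Brieskorn polynomial is a convenient non-degenerate weighted homogeneous polynomial whose pure-monomial exponents are the Brieskorn exponents, so the proof of Theorem \ref{thm:Main} (which recovers every $\mult(q)$, hence the full multiset $\{\delta_1,\ldots,\delta_d\}$, from the invariant $\tilde Z_f(T)$) applies verbatim. Your fleshed-out version, including the observation that Lemmas \ref{lem:exp2} and \ref{lem:exp4} already cover the needed parity arguments in the general setting, adds nothing that would diverge from the paper's route.
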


\footnotesize
\def\cprime{$'$}

\end{document}